\begin{document}
\author{Julian Sahasrabudhe}
\title{Exponential Patterns in Arithmetic Ramsey Theory }
\maketitle

\newtheorem{theorem}{Theorem}
\newtheorem{definition}[theorem]{Definition}
\newtheorem{lemma}[theorem]{Lemma}
\newtheorem{claim}[theorem]{Claim}
\newtheorem{subclaim}[theorem]{Sub-claim}
\newtheorem{corollary}[theorem]{Corollary}
\newtheorem{problem}[theorem]{Problem}
\newtheorem{fact}[theorem]{Fact}
\newtheorem{observation}[theorem]{Observation}
\newtheorem{conjecture}[theorem]{Conjecture}
\newtheorem{question}[theorem]{Question}
\newtheorem{prop}[theorem]{Proposition}
\newcommand{\Addresses}{{
\bigskip
\footnotesize
\textsc{ Julian Sahasrabudhe, Department of Mathematics, University of Memphis, Memphis Tennessee, USA}\par\nopagebreak
 \texttt{julian.sahasra@gmail.com}
}}

%\begin{abstract}
%We show that for any finite colouring of the natural numbers there exist $a,b >1$ such that the triple $\{a,b,a^b\}$ is monochromatic. We go on to show the partition regularity of a much richer class of patterns involving exponentiation. For example, as a corollaries to our main theorem, we show that for every finite colouring of the natural numbers there exist integers $a,b >1 $ such that the quadruple $\{ a,b,ab,a^b \}$ is monochromatic and that for every $k \in \mathbb{N}$, there exist integers $c,d>1$ so that the set $\{c,d,c^d,c^{d^2},\ldots,c^{d^k}\}$ is monochromatic. 
%\end{abstract}
%\begin{abstract}
%We show that for every finite colouring of the natural numbers there exists $a,b >1$ such that the triple $\{a,b,a^b\}$ is monochromatic.\ We go on to show the partition regularity of a much richer class of patterns involving exponentiation. For example, as a corollary to our main theorem, we show that for every $n \in \mathbb{N}$ and for every finite colouring of the natural numbers, we may find a monochromatic set consisting of the integers $x_1,\ldots,x_n >1$, together with all products of distinct $x_i$, in addition to all ``exponential compositions'' of distinct $x_i$ which respect the order $x_1,\ldots,x_n$. In particular, for every finite colouring of the natural numbers one can find a monochromatic quadruple of the form $\{ a,b,ab,a^b \}$, where $a,b>1$.
%\end{abstract}
\begin{abstract}
We show that for every finite colouring of the natural numbers there exists $a,b >1$ such that the triple $\{a,b,a^b\}$ is monochromatic.\ We go on to show the partition regularity of a much richer class of patterns involving exponentiation.\ For example, as a corollary to our main theorem, we show that for every $n \in \mathbb{N}$ and for every finite colouring of the natural numbers, we may find a monochromatic set including the integers $x_1,\ldots,x_n >1$; all products of distinct $x_i$; and all ``exponential compositions'' of distinct $x_i$ which respect the order $x_1,\ldots,x_n$. In particular, for every finite colouring of the natural numbers one can find a monochromatic quadruple of the form $\{ a,b,ab,a^b \}$, where $a,b>1$.
\end{abstract}

\section{Introduction}
The study of arithmetic Ramsey theory, broadly speaking, looks to better understand what arithmetic structure is guaranteed within a colour class of an arbitrary finite colouring of the natural numbers. Over one hundred years ago, Schur \cite{Sch} proved what is regarded as the first result in this area. He showed that if one colours the integers with finitely many colours, one can find elements $x,y,z$ satisfying $x + y = z$ that all receive the same colour. Another cornerstone result, published in 1927, is due to van der Waerden \cite{vdW} who showed that every finite colouring of the integers contains arbitrarily long monochromatic progressions. The theory of finite systems of equations that may be solved in a colour class has come to be well understood: Rado \cite{Ra}, in 1933, classified the finite systems of homogeneous linear equations that admit a monochromatic solution in an arbitrary colouring of the natural numbers, and subsequently Hindman and Leader \cite{ImagePartReg}, extended these results to a complete theory for finite linear systems. For the details of this development and many other related results, see the survey of Hindman \cite{HindSurvey}.  
\paragraph{}
Progress on finding collections of non-linear patterns that admit monochromatic solutions has proven to be more difficult. Various results on polynomial patterns have been obtained, including the celebrated ``polynomial van der Waerden theorem'' of Bergelson and Leibman \cite{BL} (see also the elementary proof of Walters \cite{Mark}), which states that for any polynomials $f_1,\ldots,f_d$, $d \in \mathbb{N}$, with integer coefficients and constant term zero, one can find a monochromatic patten of the form $\{a + f_1(b), \ldots, a+ f_d(b)\}$, in a given finite colouring, with $a,b \in \mathbb{N}$. However, many open questions remain, even in the domain of polynomial patterns \cite{PeterAndSarkozy}. For example, it is not known if a finite colouring of the integers admits a monochromatic solution to $x^2 + y^2 = z^2$ \cite{ErdosGraham}. Perhaps even more surprisingly, it is unknown whether a finite colouring of $\mathbb{N}$ must contain a monochromatic quadruple of the form $\{a,b,a+b,ab\}$ \cite{HLS}. However, a recent breakthrough on this problem is due to Moreira \cite{Moreira}, who has shown that every finite colouring of $\mathbb{N}$ admits a monochromatic pattern of the form $\{a,a+b,ab\}$. In what follows, we drop the set braces around the monochromatic patterns. 
\paragraph{}
Infinite linear systems of equations have also been studied. Notably, Hindman \cite{Hind} proved that every finite colouring of the natural numbers admits  positive integers $x_1,x_2,x_3, \ldots $ such that all the finite sums of distinct $x_i$ receive the same colour. Other examples of infinite linear systems that admit monochromatic solutions in an arbitrary finite colouring are known \cite{BHLS}, \cite{Mill}, \cite{Tay}, however it appears that we are far from a classification of such infinite systems of equations \cite{HLS}. 
\paragraph{}
The starting point of this paper is a surprising observation made by Sisto in 2011 \cite{Si}, who showed that an arbitrary 2-colouring of $\mathbb{N}$ admits infinitely many monochromatic triples of the form $a,b,a^b$ (henceforth: exponential triples). In his paper, Sisto went on to ask if one can find monochromatic exponential triples in an arbitrary \emph{finite} colouring of $\mathbb{N}$. Brown \cite{Br}, simplifying and extending the proof of Sisto, gave further examples of exponential, monochromatic patterns that are present in an arbitrary 2-colouring of the positive integers and proved some weaker results for more colours. Our first task will be to answer the original question of Sisto by showing that any \emph{finite} colouring of the positive integers admits $a,b >1$ such that $a,b,a^b$ is monochromatic. 

\begin{theorem} \label{thm:MonoExpTriple} For every finite colouring of $\mathbb{N}$ there exists integers $a,b>1$ such that $a,b,a^b$ is monochromatic.  
\end{theorem}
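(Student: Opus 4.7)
The plan is to reduce the problem, via the identity $(2^{x})^{y} = 2^{xy}$, to a Ramsey-theoretic statement about multiplicative structure inside a colour class that is preserved by the map $n \mapsto 2^{n}$. Writing $a = 2^{x}$ and $b = y$, the desired monochromatic triple $\{a, b, a^{b}\}$ is equivalent to locating integers $x \ge 1$ and $y \ge 2$ with
\[
\chi(2^{x}) \;=\; \chi(y) \;=\; \chi(2^{xy}).
\]

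To produce such $x, y$, I would introduce the $r^{2}$-colouring $\Psi : \mathbb{N} \to [r]^{2}$ defined by $\Psi(n) = (\chi(n), \chi(2^{n}))$ and apply the multiplicative form of Hindman's theorem to $\Psi$. This yields a sequence $x_{1} < x_{2} < \cdots$ and colours $\alpha, \beta$ such that every finite product $\prod_{i \in F} x_{i}$ of distinct $x_{i}$'s satisfies $\chi\bigl(\prod_{i \in F} x_{i}\bigr) = \alpha$ and $\chi\bigl(2^{\prod_{i \in F} x_{i}}\bigr) = \beta$. In the favourable case $\alpha = \beta$, we are done immediately: for any $i < j$, the triple $(2^{x_{i}}, x_{j}, 2^{x_{i} x_{j}})$ is monochromatic of colour $\alpha$, since $(2^{x_{i}})^{x_{j}} = 2^{x_{i} x_{j}}$ and $x_{i} x_{j}$ is a product of distinct $x$'s.

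The hard case is $\alpha \ne \beta$. Here I would iteratively refine the sequence by pigeonholing (via Ramsey's theorem for increasing tuples) on the colours of iterated exponential expressions such as $x_{i}^{x_{j}}$, $(x_{i}^{x_{j}})^{x_{k}} = x_{i}^{x_{j} x_{k}}$, and so on. The key observation is that the identity $a^{bc} = (a^{b})^{c}$, combined with the multiplicative Hindman control on products of the $x_{i}$'s, allows one to rewrite $a^{b}$ as a tower whose colour has been fixed by a previous refinement. Each new refinement either yields a colour coincidence that directly gives the desired triple for an appropriate choice of $a, b$ built from partial products of the Hindman sequence, or introduces a fresh colour; since only $r$ colours are available, the iteration must terminate favourably within $r$ levels.

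The main obstacle is precisely this last point: translating an abstract colour coincidence among distinct tower levels into a genuine triple of the form $a, b, a^{b}$. The set $\{x_{i_{1}}^{x_{i_{2}}^{\cdots^{x_{i_{k}}}}}\}$ is not closed under $(u, v) \mapsto u^{v}$ in a way that respects the underlying Hindman parameters, so each coincidence demands a tailored parameterisation. I expect the cleanest organisation is an induction on $r$, in which each reduction either produces the desired triple outright or replaces $\chi$ by an auxiliary colouring using strictly fewer relevant colours, handing off to a smaller instance of the same problem.
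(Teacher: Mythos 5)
Your reduction $a = 2^x$, $b = y$, $a^b = 2^{xy}$ and the auxiliary colouring $\Psi(n) = (\chi(n), \chi(2^n))$ are both sensible, and the easy case $\alpha = \beta$ is correct and closes cleanly. But the hard case $\alpha \neq \beta$ is where the whole difficulty lives, and the iteration you sketch does not close it.

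Two concrete obstructions. First, the quantities you propose to pigeonhole on, such as $x_i^{x_j}$ and $x_i^{x_j x_k}$, are not controlled by the multiplicative Hindman structure at all: Hindman gives you $\chi\bigl(\prod_{i\in F}x_i\bigr)$ and (via $\Psi$) $\chi\bigl(2^{\prod_{i\in F}x_i}\bigr)$, but $x_i^{x_j}$ is a completely different integer whose colour you have no handle on. Second, even if you enlarge $\Psi$ to track the colours of the whole tower $T_k(n) = 2^{2^{\cdots^{2^n}}}$ and pigeonhole to get $\alpha_i = \alpha_j$ for some levels $i < j$, the coincidence only converts into a genuine triple $a, b, a^b$ when $(i,j) = (0,1)$, i.e. exactly your easy case. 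For $i \geq 1$ the natural candidates $a = T_{i+1}(p)$, $b = T_i(q)$ give $a^b = 2^{T_i(p)T_i(q)}$, and $T_i(p)T_i(q) = 2^{T_{i-1}(p) + T_{i-1}(q)}$ involves a \emph{sum} one level down, which your multiplicative Hindman sequence does not control. The additive and multiplicative structures are being asked to interact here, and that is precisely the kind of interaction that finite-sum/finite-product machinery does not supply for free (this is closely related to the open $\{a,b,a+b,ab\}$ problem). Your closing paragraph acknowledges this gap but leaves it as a hope rather than an argument.

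The paper avoids the issue by choosing a richer parametrisation: it restricts attention to integers of the form $2^{d\,2^{x}}$ and studies the family of colourings $f(d;x) = f\bigl(2^{d2^{x}}\bigr)$. The crucial algebra is
\[
\bigl(2^{d\,2^{x}}\bigr)^{2^{d'2^{x'}}} \;=\; 2^{d\,2^{x + d'2^{x'}}},
\]
so exponentiation becomes a \emph{shift} of $x$ by $d'2^{x'}$; van der Waerden's theorem (not Hindman's) is then used to find long arithmetic progressions in $x$ along which the relevant colourings are constant, and a compactness argument on sequences of colourings $f(d(M);\cdot)$ builds up $r > k$ colours whose coexistence forces a monochromatic triple. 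This sidesteps the additive/multiplicative tension in your sketch entirely because the multiplicative coefficient $d$ is carried passively while the additive shift in $x$ is handled by van der Waerden. If you want to rescue your approach, the missing idea is to replace the single exponent $n$ by the two-parameter family $d2^{x}$ and to swap Hindman for van der Waerden plus a compactness/limit argument.
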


As the proof of this is much simpler than our main theorem, we devote Section~\ref{sec:MonoExpTriple} to giving a self-contained proof of Theorem~\ref{thm:MonoExpTriple}. We should remark that there is no natural ``density-type'' result for exponential triples, in the sense of Szemer\'{e}di's theorem \cite{Sz} (See \cite{TaoVu}). In fact, for $N \in \mathbb{N}$ the set $[N] \setminus \{ a^i : a \in \mathbb{N}, i\geq 2\}$ is of size $(1-o(1))N$ and contains no exponential triple. 

\paragraph{}
The main result of this paper, Theorem~\ref{MainTheorem}, will be that a certain class of patterns defined by exponentiation and multiplication (which we call FEP-sets) is partition regular. As the statement of the result itself is a little technical, we postpone the presentation of Theorem~\ref{MainTheorem}  and start, instead, by mentioning two more palatable corollaries of our master theorem. The first such corollary of Theorem~\ref{MainTheorem} actually possess many (but not all) of the difficulties of the full theorem. 

\begin{theorem} \label{Theorem:abatimesbatotheb}
For every finite colouring of $\mathbb{N}$ there exists $a,b>1$ such that $a,b,ab,a^b$ is monochromatic.
\end{theorem}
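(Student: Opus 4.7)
The plan is to strengthen Theorem \ref{thm:MonoExpTriple} so that the monochromatic exponential triple it produces can be chosen inside a monochromatic multiplicative Hindman set; the fourth element $ab$ of the quadruple then lies in the same Hindman set automatically and so inherits its colour.

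First, I would apply the multiplicative version of Hindman's theorem (obtained from the additive version by pulling the colouring back along $n \mapsto 2^{n}$) to the given finite colouring $\chi$ of $\mathbb{N}$. This yields a sequence $x_{1}<x_{2}<\cdots$ of integers greater than $1$ and a colour $\alpha$ such that every finite product $\prod_{i\in I} x_{i}$, with $I\subset\mathbb{N}$ finite and nonempty, receives colour $\alpha$. Thinning further, I would take $(x_{n})$ to grow rapidly enough that every element of the Hindman set $FP(x_{n}):=\{\prod_{i\in I}x_{i}:I\subset\mathbb{N}\text{ finite, nonempty}\}$ admits a unique such representation. Then for any $a=\prod_{i\in I}x_{i}$ and $b=\prod_{j\in J}x_{j}$ in $FP(x_{n})$ with $I\cap J=\emptyset$, the product $ab=\prod_{k\in I\cup J}x_{k}$ also lies in $FP(x_{n})$, and so has colour $\alpha$.

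The remaining task is to produce two elements $a,b\in FP(x_{n})$ with disjoint index sets $I,J$ and with $a^{b}\in FP(x_{n})$. Once this is done, the quadruple $\{a,b,ab,a^{b}\}$ sits inside $FP(x_{n})$, is monochromatic in colour $\alpha$, and Theorem \ref{Theorem:abatimesbatotheb} follows.

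This last step is, I expect, where most of the work lies. Theorem \ref{thm:MonoExpTriple} applied directly to $\chi$ only guarantees some monochromatic exponential triple in $\mathbb{N}$, with no control over whether it lives inside the prescribed Hindman set. To force the triple into $FP(x_{n})$ I would attempt to rerun the argument from Section \ref{sec:MonoExpTriple} while restricting attention to a multiplicatively idempotent ultrafilter $p\in\beta\mathbb{N}$ with $FP(x_{n})\in p$: for any such $p$ and any $A\in p$, the set $A^{\star}=\{a\in A:a^{-1}A\in p\}$ is again in $p$, which is the standard route to Hindman-style structure and should let one force $a$ and $b$ to have disjoint $FP$-representations. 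The technical core is then to check that the exponentiation step in the proof of Theorem \ref{thm:MonoExpTriple} is compatible with this multiplicative idempotence — that, for $p$-almost every $a\in FP(x_{n})$, there is a $p$-large set of $b$ with $a^{b}\in FP(x_{n})$. I would expect this compatibility statement to be precisely what the FEP-set framework and the partition regularity theorem announced in the abstract are designed to deliver.
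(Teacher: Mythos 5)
Your plan has a genuine gap, and it occurs exactly at the step you flag as ``where most of the work lies.'' The trouble is that this step is not merely hard to carry out; for the objects you have set up, it is impossible. After thinning a Hindman sequence so that $FP(x_n)$ has unique representations, the $x_n$ can be taken pairwise coprime (for instance one can always thin a Hindman sequence to a subsequence of pairwise coprime integers, or even choose them from among numbers with disjoint prime supports). But then for $a=\prod_{i\in I}x_i$ with $|I|\geq 1$ and any $b>1$, the number $a^b$ has every prime of $a$ occurring to a multiplicity divisible by $b$, while every element of $FP(x_n)$ has each $x_i$ occurring to multiplicity $0$ or $1$. So $a^b\notin FP(x_n)$, full stop. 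No choice of idempotent ultrafilter $p$ can rescue this: the set $\{b : a^b\in FP(x_n)\}$ is empty, hence certainly not in $p$. The claimed ``compatibility statement'' — that multiplicative idempotence forces a $p$-large set of $b$ with $a^b\in FP(x_n)$ — is false.

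The underlying issue is that you are trying to obtain the multiplicative structure first (as a black-box Hindman set) and then impose exponential structure inside it afterward. The paper's proof does the opposite: it \emph{constructs} the generators so that products and the required exponential compositions land in the same colour class, simultaneously. Concretely, Theorem~\ref{Theorem:abatimesbatotheb} is obtained in the paper as a corollary of Theorem~\ref{MainTheorem}, whose proof works with integers of the special form $2^{d2^x}$ (so that exponentiation translates into additive shifts inside the tower), builds an elaborate system of ``synchronized arrays'' of derived colourings in Lemma~\ref{structureBuildingLemma}, and then uses a refined finite-unions theorem (Lemma~\ref{BasicRamseyLemma}) to extract a monochromatic $FEP_W$-set. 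Your last paragraph effectively says that the hard step is ``precisely what the FEP-set framework is designed to deliver'' — but that framework \emph{is} the proof of the theorem, so deferring to it makes the argument circular. To give an actual proof you would need to replace the black-box Hindman sequence with a bespoke construction in which the generators are chosen so that the particular exponential relations you need hold; that is where all of the content of Sections~\ref{sec:DefsAndPrelims}--\ref{sec:ProofOfMainThm} lives.
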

To go further, we need some terminology. If $X$ is a finite set, call a function $f:\mathbb{N} \rightarrow X$ a \emph{finite colouring} of $\mathbb{N}$ and refer to the elements of $X$ as \emph{colours}. For a finite colouring $f$ of $\mathbb{N}$, we say that a set $A \subseteq \mathbb{N}$ is monochromatic if $f$ is constant on $A$. If $\mathcal{A} \subseteq \mathcal{P}(\mathbb{N})$, we say that $\mathcal{A}$ is \emph{partition regular} if for every finite colouring $f$ of $\mathbb{N}$, one can find a monochromatic set contained in $\mathcal{A}$. For $m \in \mathbb{N}$ and integers $x_1,\ldots,x_m$, we define the \emph{finite sum set} and \emph{finite product set generated by} $x_1,\ldots,x_m$ as
\[ FS(x_1,\ldots,x_m) = \left\lbrace \sum_{i \in I} x_i : \emptyset \not= I \subseteq [m] \right\rbrace, \] 
\[ FP(x_1,\ldots,x_m) = \left\lbrace \prod_{i \in I} x_i : \emptyset \not= I \subseteq [m] \right\rbrace,
\] respectively. Define the \emph{finite exponential set generated by} $x_1,\ldots,x_m$ by first defining $FE(x_m) = \{ x_m \}$ and then, inductively assuming that $FE(x_m), \ldots, FE(x_2,\ldots,x_{m})$ have been defined, we define
\[ FE(x_1,\ldots,x_m) = \left\lbrace x_i^{e_{i+1}e_{i+2} \cdots e_m} : i \in [m], e_{i+1} \in FE(x_{i+1},\ldots, x_m) \cup \{1 \}, \ldots, e_m \in FE(x_m)\cup \{1 \} \right\rbrace. \]
Informally speaking, $FE(x_1,\ldots,x_m)$ is the set of all exponential compositions of the integers $x_1,\ldots,x_m$ that ``respect the order'' of  $x_1,\ldots,x_m$. The analogy with finite sum and product sets is made more salient by first considering exponentiation as a binary operation $\star$, defined by $a \star b = a^b$, and then noting that the set $FE(x_1,\ldots,x_m)$ contains all elements of the form (although not exclusively)
\[ x_{i_1} \star \cdots \star x_{i_l} 
,\] where $l \in [m]$, $1 \leq i_1 < \cdots < i_l \leq m$, and the brackets may be placed in any syntactically valid manner. 
\paragraph{}
Using this notation, we state our second, ``easily digestible'' corollary of Theorem~\ref{MainTheorem}, before we go on to give it in full generality.  
\begin{theorem} \label{thm:ProductsAndExponents}
For every $m \in \mathbb{N}$ and every finite colouring of $\mathbb{N}$ there exist integers $x_1,\ldots,x_m > 1$, for which \[ FP(x_1,\ldots,x_m) \cup FE(x_1,\ldots,x_m) \] is monochromatic. 
\end{theorem}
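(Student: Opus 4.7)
The plan is to combine Hindman's theorem in the multiplicative semigroup $(\mathbb{N},\cdot)$ with a downward induction that builds $x_m,x_{m-1},\ldots,x_1$ in reverse order. Apply multiplicative Hindman to the given finite colouring $\chi$ to obtain an infinite sequence $(y_n)_{n\geq 1}$ whose finite product set $FP(y_n:n\geq 1)$ lies entirely in one colour class, call it $c_0$. Insisting that each $x_i$ be chosen as a product of distinct $y_n$'s makes $FP(x_1,\ldots,x_m)\subseteq c_0$ automatic, so the entire content of the theorem reduces to controlling the exponential set $FE(x_1,\ldots,x_m)$.

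I would next maintain, after having chosen $x_j,x_{j+1},\ldots,x_m$, an infinite multiplicative IP-subsystem $T_j\subseteq FP(y_n:n\geq 1)$ with the invariant that $FP(T_j)\subseteq c_0$ and, for every $y\in T_j$ and every admissible exponent tuple $(e_{j+1},\ldots,e_m)\in\prod_{i>j}\bigl(FE(x_i,\ldots,x_m)\cup\{1\}\bigr)$, the value $y^{e_{j+1}\cdots e_m}$ lies in $c_0$. The choice of $x_j\in T_{j+1}$ is then free, since the invariant on $T_{j+1}$ already supplies all the exponential conditions needed at position $i=j$; the actual work is in thinning $T_{j+1}$ to a new $T_j$ enforcing the invariant for the longer tuples now involving $e_j\in FE(x_j,\ldots,x_m)\cup\{1\}$. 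For each of the finitely many new exponents $F=e_je_{j+1}\cdots e_m$, I would apply Hindman's theorem to the auxiliary colouring $y\mapsto\chi(y^F)$ on $T_{j+1}$ to extract an infinite multiplicative IP sub-system on which this colouring is constant, then intersect the finitely many such systems and close under products to produce $T_j$. Once the invariant survives to $j=1$, any $x_1\in T_1$ completes the construction.

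The central obstacle is guaranteeing that the constant colour produced by each auxiliary Hindman application is exactly $c_0$ rather than some accidental other colour; this is precisely the difficulty that already appears at the single-step level in Theorem~\ref{Theorem:abatimesbatotheb}. I expect to overcome it by preparing the sequence $(y_n)$ very rigidly---for instance by taking $y_n=c^{a_n}$ for a fixed base $c$ and an auxiliary IP-sequence $(a_n)$, or more generally iterated towers of such expressions---so that exponentiation $y^F$ factors through multiplication at the level of the exponent algebra, where the Hindman colour $c_0$ is already preserved. Iterating the one-step colour-transfer mechanism of Theorem~\ref{Theorem:abatimesbatotheb} through $m$ such layers, in lockstep with the downward induction, should carry the colour $c_0$ up the full tower of exponents and deliver the desired monochromaticity of $FP(x_1,\ldots,x_m)\cup FE(x_1,\ldots,x_m)$.
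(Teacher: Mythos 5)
Your route is quite different from the paper's --- the paper proves the much more general Theorem~\ref{MainTheorem} on $FEP_W$-sets via a machinery of derived colourings, synchronized arrays, Gallai's theorem, and a Folkman--Sanders/Ramsey-based union lemma, and then deduces the present statement by choosing the weight $W$ to grow sufficiently fast --- but your proposal has a genuine gap, and it is precisely at the point you yourself flag as ``the central obstacle.'' When you apply Hindman's theorem to the auxiliary colouring $y\mapsto\chi(y^F)$ on $T_{j+1}$, you obtain an infinite IP subsystem on which this colouring is constant, but its value is a colour over which you have no control; Hindman's theorem locates a rich colour class, it does not let you nominate one. Your proposed fix of taking $y_n=c^{a_n}$ so that $y^F=c^{aF'}$ turns exponentiation into a dilation at the exponent level, but that does not repair the problem: the new exponent $aF'$ is a scalar multiple of $a$, not a finite sum of the $a_n$'s, so constancy of $a\mapsto\chi(c^a)$ along $FS(a_1,a_2,\ldots)$ says nothing about the colour of $c^{aF'}$. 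The mismatch recurs at every level of the downward induction and there is no mechanism in your sketch by which it averages out or stabilises to $c_0$.

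The paper sidesteps exactly this obstruction by never attempting to match colours to a prescribed $c_0$ in the first place. The core lemmas (Lemma~\ref{lem:MainLemMonoExpTriples} in the warm-up proof of Theorem~\ref{thm:MonoExpTriple}, and Lemma~\ref{structureBuildingLemma} in general) run a contrapositive pigeonhole argument: assuming no monochromatic configuration exists, van der Waerden/Gallai together with the ``large colour sequence'' device and the derived-colouring limits manufacture pairwise \emph{distinct} colours $c_1,\ldots,c_r$, and taking $r>k$ gives a contradiction. That is structurally different from pinning down a single colour through iterated Hindman applications, and it is precisely the ingredient your outline lacks. Unless you can supply a comparable consistency or distinctness-forcing mechanism across the iterations, the colour-transfer step will fail, and the argument does not go through even for $m=2$ (i.e.\ Theorem~\ref{Theorem:abatimesbatotheb}, whose ``one-step mechanism'' you invoke but which the paper never proves in isolation --- it too is deduced from Theorem~\ref{MainTheorem}).
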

Before coming to our main theorem, we augment our definition of a finite product set. Given a set $S \subseteq [m]$, an arbitrary ``weight'' function $W : \mathbb{N}^{(\leq m)} \rightarrow \mathbb{N}$, on subsets of $\mathbb{N}$ of cardinality at most $m$, we define the set
\[ FP_{S,W}(x_1,\ldots,x_m) = \left\lbrace  \prod_{i \in S} x_i^{p_i} : p_i \in [0, W(\{ x_{i+1}, \ldots, x_{m} \})], \text{ for each } i \in S \right\rbrace,
\] and refer to an element of $FP_{S,W}(x_1,\ldots,x_m)$ as a $W$-\emph{weighted product} of $x_1,\ldots,x_m$ \emph{supported on} $\{x_i\}_{i \in S}$. To illustrate, set $m=4$, $S = [4]$ and define the weight function $W$, on subsets of $\mathbb{N}$ of size at most $4$, by $W(\{x_1,\ldots,x_j\}) = \sum_{i}^j x_i$, for $j \in [4] $. Also set $W(\emptyset) = 10$. In this case, the set $FP_{S,W}(x_1,\ldots,x_m)$ is precisely the set of integers $x_1^ax_2^bx_3^cx_4^d$, where $a \in [0,x_2+x_3+x_4]$, $b \in [0,x_3+x_4]$, $c \in [0,x_4]$ and $d \in [0,10]$. If instead we were to set $S = \{2,3\}$, we would have $FP_{S,W}(x_1,x_2,x_3,x_4) = \{ x_2^ax_3^b : a \in [0,x_3+x_4], b \in [0,x_4] \}$. 
\paragraph{}
Finally, we define the \emph{finite exponential-product} set \emph{generated} by $x_1,\ldots,x_m$, with \emph{weight} $W$ to be the set of integers
\[ FEP_W(x_1,\ldots,x_m) = \left\lbrace \prod_{i \in B} x_i^{e_i} : \emptyset \not= B \subseteq [m], e_i \in FP_{[m] \setminus B,W}(x_{i+1},\ldots,x_m), \text{ for each } i \in B \right\rbrace . 
\] One can think of the above FEP-set as the collection of integers that one can construct from $x_1,\ldots,x_m$ according to the following rule: ``First choose some non-empty $B \subseteq [m]$ as the indexes of the `bases' in the product; then, for each $i \in B$, choose an integer $e_i$ to be a $W$-weighted product of $x_1,\ldots,x_m$ that is supported on the elements $\{x_{i+1},\ldots,x_m\} \setminus \{ x_i :i \in B\}$; then form $\prod_{i \in B} x_i^{e_i}$''.
\paragraph{}
So, for appropriately chosen $W$, the set $FEP_W(a,b,c)$ contains $a,b,c$; the ``height-one'' exponential compositions $a^b,b^c,a^c$; the products $ab,ac,bc,abc$; various ``tower-type'' expressions such as $a^{b^c}$, $a^{b^{c^2}}$, $a^{b^cc}$,$a^{b^{c^c}}, b^{c^2}$; along with many patterns which mix exponentiation and multiplication, for example, $a^bc,a^{b^c}c, (ab)^c$, and $a(b^c)$. It is also worth pointing out that for every $k \in \mathbb{N}$, one may choose $W$ appropriately so that $FEP_W(a,b,c)$ contains the ``exponential progression'' $a,b,a^b,a^{b^2},\ldots,a^{b^k}$.
\paragraph{} The main result of this paper, to whose proof we dedicate Sections~\ref{sec:DefsAndPrelims} and \ref{sec:ProofOfMainThm}, is that the collection of $FEP_W$ sets, $\{ FEP_W(x_1,\ldots,x_m) \}_{x_1,\ldots,x_m \in \mathbb{N}}$ is partition regular, for any $m \in \mathbb{N}$ and any weight function $W$.  
\begin{theorem} \label{MainTheorem}
Let $m \in \mathbb{N}$ and $W : \mathbb{N}^{(\leq m)} \rightarrow \mathbb{N}$ be an arbitrary function. For every finite colouring of $\mathbb{N}$ there exist integers $x_1,\ldots,x_m > 1 $ such that $FEP_W(x_1,\ldots,x_m)$ is monochromatic. 
\end{theorem}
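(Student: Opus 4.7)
The plan is to proceed by reverse induction on the index, building $x_m, x_{m-1}, \ldots, x_1$ so that at each stage $FEP_W(x_i, x_{i+1}, \ldots, x_m)$ lies in a single colour class. The base case $i = m$ is trivial since $FEP_W(x_m) = \{x_m\}$. For the inductive step, I would assume $x_{i+1}, \ldots, x_m$ are such that $FEP_W(x_{i+1}, \ldots, x_m) \subseteq C$ for some colour class $C$, and seek $x_i > 1$ extending the property.

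The elements of $FEP_W(x_i, \ldots, x_m)$ not already in $FEP_W(x_{i+1}, \ldots, x_m)$ are those with $i$ in the index set $B$ of ``bases'', and so take the form $x_i^{e} \cdot \pi$, where $e \in FP_{[m] \setminus B, W}(x_{i+1}, \ldots, x_m)$ and $\pi = \prod_{j \in B \setminus \{i\}} x_j^{e_j}$. Since $x_{i+1}, \ldots, x_m$ and $W$ are fixed, each exponent $p_j$ appearing in a weighted product lies in the finite interval $[0, W(\{x_{j+1}, \ldots, x_m\})]$, so the collection of pairs $(e, \pi)$ that arise is a \emph{finite} set $\mathcal{F}$. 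Hence the inductive step reduces to producing a single $x_i > 1$ that simultaneously satisfies $x_i^e \cdot \pi \in C$ for every $(e, \pi) \in \mathcal{F}$.

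To solve this finite system of constraints, I would invoke ultrafilter methods on the multiplicative semigroup $(\beta\mathbb{N}, \cdot)$. The pivotal observation is that for every fixed $e \in \mathbb{N}$, the map $\phi_e(x) = x^e$ is a homomorphism of $(\mathbb{N}, \cdot)$, so the push-forward $\phi_{e*} p$ of any multiplicatively idempotent ultrafilter $p$ is again a multiplicative idempotent. Taking $p$ to be a minimal idempotent in the smallest two-sided ideal of $(\beta\mathbb{N}, \cdot)$, the unique colour class $C$ with $C \in p$ is multiplicatively central, and therefore contains abundant multiplicative structure. One then argues that for every finite family $\mathcal{F}$, the intersection $\bigcap_{(e,\pi) \in \mathcal{F}} \{x : x^e \pi \in C\}$ is itself in $p$ (using that central sets behave well under both multiplicative translations $n \mapsto n\pi^{-1}$ and the homomorphisms $\phi_e$), and in particular is non-empty.

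The main obstacle is strengthening the inductive hypothesis enough to carry the required algebraic structure forward. At each stage one likely needs to replace $C$ by a subset $C' \subseteq C$ still selected by a suitable idempotent, so that the push-forwards $\phi_{e*}$ for the exponents $e$ arising at \emph{future} induction steps remain well-behaved with respect to the same ambient ultrafilter. Managing this enriched hypothesis — which records not just a colour class but a distinguished central ultrafilter tracking compatibility with every multiplicative homomorphism that will appear — and verifying that the push-forward and restriction operations preserve the required largeness, appears to be the most delicate part of the argument.
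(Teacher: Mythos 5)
Your reduction in the second paragraph is sound: once $x_{i+1},\ldots,x_m$ are fixed, the new elements of $FEP_W(x_i,\ldots,x_m)$ all have the form $x_i^e\cdot\pi$ with $(e,\pi)$ ranging over a finite family $\mathcal F$, so the inductive step amounts to placing a single $x_i$ into $C\cap\bigcap_{(e,\pi)\in\mathcal F}\{x:x^e\pi\in C\}$. The gap is at the next step. It is true that $\phi_e(x)=x^e$ is a homomorphism of $(\mathbb N,\cdot)$ and that its Stone extension $\widetilde\phi_e:\beta\mathbb N\to\beta\mathbb N$ sends idempotents to idempotents; but $\widetilde\phi_e(p)$ is an ultrafilter concentrated on $e$-th powers and is in general a \emph{different} idempotent from $p$. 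The condition $\{x:x^e\pi\in C\}\in p$ unwinds to $C\in\pi\cdot\widetilde\phi_e(p)$, and no standard fact about minimal idempotents or central sets lets you conclude this from $C\in p$. Already the simplest instance illustrates the problem: to get a monochromatic $a,b,a^b$ you would want $C\cap\phi_b^{-1}(C)\ne\emptyset$, but $\phi_b^{-1}(C)\in p$ requires $C\in\widetilde\phi_b(p)$, which does not follow from $C\in p$. Your final paragraph, about carrying a distinguished ultrafilter compatible with every future push-forward, is precisely where the missing content lives; as written it is a restatement of the difficulty rather than a resolution of it.

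The paper takes a different route and does not use ultrafilters at all. The crucial device is the change of variables $x\mapsto 2^{d2^x}$: if $a=2^{d2^u}$ and $b=2^{e2^v}$ then $a^b=2^{d2^{u+e2^v}}$, so under this encoding exponential composition becomes an \emph{affine} operation on the inner exponent. This converts the problem into one about arithmetic progressions and grids, which is handled by van der Waerden and Gallai together with compactness of sequences of colourings. Lemma~\ref{structureBuildingLemma} then builds up, by a forward induction on the number of ``levels'', a synchronized array of derived colourings indexed by nonempty subsets of $[0,r]$, and the final proof extracts the $FEP_W$-set from this structure via Lemma~\ref{BasicRamseyLemma}, a finite-unions strengthening of the Folkman--Sanders theorem. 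If you want to make the ultrafilter route work, you will likely have to incorporate a version of this encoding (so that the relevant maps become translations or dilations in a single semigroup), rather than working directly with the power maps $\phi_e$ on $(\beta\mathbb N,\cdot)$.
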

From Theorem~\ref{MainTheorem} we easily deduce Theorem~\ref{thm:ProductsAndExponents} (and hence Theorem~\ref{Theorem:abatimesbatotheb}) by simply choosing $W$ to be a function that is growing sufficiently quickly.  
\paragraph{}
Towards proving the non-partition regularity of certain exponential patterns, in Section~\ref{sec:ClassificationResult} we give examples of colourings which we shall use to show the non-partition regularity of several natural exponential patterns. For example, we show that $a,b,a^b,b^a$ is \emph{not} partition regular. Indeed, using these colourings along with Theorem~\ref{MainTheorem}, we shall deduce the following classification of ``height-one'' exponential systems.
\paragraph{}
Suppose that $m \in \mathbb{N}$, and that $R$ is a binary relation on $[m]\times[m]$. If $x_1,\ldots,x_m \in \mathbb{N}$, we define the set of integers  
\[ R\{x_1,\ldots,x_m\} = \left\lbrace x_1,\ldots,x_m \right\rbrace \cup \left\lbrace x_i^{x_j} : (i,j) \in R \right\rbrace
\] and then define the \emph{exponential pattern with shape} $R$ to be the collection of all such $R\{x_1,\ldots,x_m\}$, with $x_1,\ldots,x_m >1$. For example, if $m = 4$ and $R = \{ (1,2),(2,3),(2,4) \}$ the exponential pattern with shape $R$ is the collection
\[ \left\lbrace \{x_1,x_2,x_3,x_4, x_1^{x_2},x_2^{x_3},x_2^{x_4} \} : x_1,x_2,x_3,x_4 \in \mathbb{N}\setminus \{1\} \right\rbrace .
\] We classify the relations that give rise to partition regular exponential patterns. 

\begin{theorem}  \label{ClassificationOfFirstOrderCompositions}
Let $m \in \mathbb{N}$ and let $R \subseteq [m] \times [m]$ be a binary relation. The exponential pattern with shape $R$ is partition regular if and only if $R$ does not contain a directed cycle. 
\end{theorem}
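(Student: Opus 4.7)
The plan is to prove the two directions of the classification separately, with the easy ``if'' direction reducing to Theorem~\ref{MainTheorem} and the ``only if'' direction reducing to the colouring constructions promised for Section~\ref{sec:ClassificationResult}. For the ``if'' direction, suppose $R$ has no directed cycle. Then $[m]$ admits a topological ordering: a permutation $\pi$ of $[m]$ such that $(i,j) \in R$ implies $\pi(i) < \pi(j)$. Setting $y_k = x_{\pi^{-1}(k)}$, every exponential $x_i^{x_j}$ prescribed by $R$ takes the form $y_{\pi(i)}^{y_{\pi(j)}}$ with exponent index strictly greater than base index. Fixing any weight $W \geq 1$ (so a single variable is an allowed exponent), the element $y_{\pi(i)}^{y_{\pi(j)}}$ lies in $FEP_W(y_1,\ldots,y_m)$ by the choice $B = \{\pi(i)\}$ and $e_{\pi(i)} = y_{\pi(j)}$, and each generator $y_k$ lies in $FEP_W$ by $B = \{k\}$ and $e_k = 1$. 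Hence $R\{x_1,\ldots,x_m\} \subseteq FEP_W(y_1,\ldots,y_m)$, and Theorem~\ref{MainTheorem} produces the required monochromatic pattern.

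For the ``only if'' direction, suppose $R$ contains a directed cycle $(i_1,i_2),(i_2,i_3),\ldots,(i_{k-1},i_k),(i_k,i_1)$ with $k \geq 1$. The core observation is that any monochromatic $R\{x_1,\ldots,x_m\}$ yields, as a subset, a monochromatic copy of the pure $k$-cycle pattern
\[ C_k\{y_1,\ldots,y_k\} = \left\{ y_1,\ldots,y_k, y_1^{y_2}, y_2^{y_3}, \ldots, y_{k-1}^{y_k}, y_k^{y_1} \right\}, \]
obtained by setting $y_t = x_{i_t}$. In particular, a finite colouring of $\mathbb{N}$ that admits no monochromatic $C_k\{y_1,\ldots,y_k\}$ with $y_1,\ldots,y_k > 1$ automatically forbids monochromatic $R\{x_1,\ldots,x_m\}$ with $x_1,\ldots,x_m > 1$, regardless of the values taken by coordinates outside the cycle. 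Thus the proof reduces to the existence of such a ``cycle-breaking'' colouring for every $k \in \mathbb{N}$, which is exactly what the constructions in Section~\ref{sec:ClassificationResult} supply (the cases $k=1$, giving non-PR of $\{a, a^a\}$, and $k=2$, giving non-PR of $\{a,b,a^b,b^a\}$, are the ones explicitly highlighted in the introduction).

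I expect the ``if'' direction to be essentially a bookkeeping exercise once one is comfortable with the somewhat heavy definition of $FEP_W$, and the reduction in the ``only if'' direction to be similarly syntactic. The genuine difficulty is therefore pushed entirely into Section~\ref{sec:ClassificationResult}: constructing a single finite colouring (or a family indexed by $k$) that simultaneously defeats every choice of $k$-tuple lying on a cyclic exponential pattern. The natural candidate is a colouring built from a super-exponentially growing sequence of thresholds applied to $\log \log n$, exploiting that if $a_1,\ldots,a_k$ all sit in a narrow ``double-logarithmic'' band then each $a_t^{a_{t+1}}$ must sit in a much higher band — but verifying this for arbitrary cycle length $k$ while keeping the number of colours finite is precisely the technical heart of the lower-bound half of the theorem.
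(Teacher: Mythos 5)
Your overall architecture is exactly the paper's: topologically sort $[m]$ when $R$ is acyclic so that each prescribed $x_i^{x_j}$ has its exponent index strictly after its base index (hence lies in an $FEP_W$ set), and observe that a directed cycle in $R$ reduces the negative direction to forbidding the pure cycle pattern $C_k\{y_1,\ldots,y_k\}$. That reduction is correct and is what the paper does.

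The gap is in the ``only if'' direction: you defer the cycle-breaking colouring to ``Section~\ref{sec:ClassificationResult}'' without actually producing it, and your speculative sketch does not isolate the mechanism the paper actually uses. Lemma~\ref{nobBiggerThana} (with $r=1$) supplies one fixed $4$-colouring --- essentially $L(x) \bmod 4$ where $L$ is the $\log^*$ function --- whose crucial property is an \emph{ordering} statement: if $b$ and $a^b$ receive the same colour with $a,b>1$, then $\log a > b$, so $a > 2^b > b$. Chaining this around a monochromatic cycle $y_1,\ldots,y_k,y_1^{y_2},\ldots,y_k^{y_1}$ forces $y_1 > y_2 > \cdots > y_k > y_1$, a contradiction that is entirely independent of $k$. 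Your worry that ``verifying this for arbitrary cycle length $k$ while keeping the number of colours finite is precisely the technical heart'' is therefore misplaced: no extra colours are needed as $k$ grows, because the obstruction is an order on the pair $(\text{exponent},\,\text{power})$, not a band-counting argument whose cost scales with the cycle. Your instinct (``super-exponentially growing thresholds applied to $\log\log n$'') is in fact the $\log^*$-colouring in disguise, but until you extract the monotonicity consequence $a > b$ from it and then chain around the cycle, the reduction remains unfinished.
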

In Section \ref{sec:consistency} we turn to the relationship between exponentiation and addition in the context of partition regularity. In sharp contrast to the situation with \emph{multiplication} and exponentiation, we show that partition regular patterns involving addition and exponentiation are very limited. In particular, we show that the partition regular patterns $x,y,x+y$, and $a,b,a^b$ are \emph{inconsistent}. This means that an arbitrary colouring of $\mathbb{N}$ need not admit a monochromatic set of the form $x,y,x+y,a,b,a^b$, despite the fact it must have a monochromatic triple $x,y,x+y$ (by Schur's Theorem) and a monochromatic triple of the form $a,b,a^b$ (Theorem~\ref{thm:MonoExpTriple}).

\begin{theorem} \label{thm:Inconsistent}
There exists a finite colouring of $\mathbb{N}$ for which there is no monochromatic set of the form $x,y,x+y,a,b,a^b$ with $x,y,a,b \in \mathbb{N}$.
\end{theorem}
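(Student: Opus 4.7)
The plan is to construct an explicit finite colouring $\chi$ of $\mathbb{N}$ and verify by a case analysis that no colour class simultaneously contains a Schur triple $\{x,y,x+y\}$ and an exponential triple $\{a,b,a^b\}$ with $a,b>1$. The construction exploits the mismatch in the ``scales'' at which the two types of triples live. I would introduce the iterated-log level $\ell(n):=\lfloor\log_2\log_2 n\rfloor$ for $n\ge 2$, so that $n$ lies in the tower-interval $I_{\ell(n)}=[2^{2^{\ell(n)}},2^{2^{\ell(n)+1}})$.

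Two observations drive the argument. First, for any Schur triple $x,y,x+y$ with $2\le x\le y$ one has $\ell(x+y)\in\{\ell(y),\ell(y)+1\}$, so a Schur triple lives in at most two adjacent levels. Second, for any exponential triple $a,b,a^b$ with $a,b>1$, the identity $\log_2\log_2 a^b=\log_2 b+\log_2\log_2 a$ yields $\ell(a^b)-\ell(a)\in[2^{\ell(b)},2^{\ell(b)+1}+1]$, so an exponential triple forces a very large ``level jump'' determined by $\ell(b)$. I would then build $\chi$ as a pair $(\ell(n)\bmod K,\psi(n))$, where $K$ is a suitable modulus and $\psi$ is a secondary finite-valued arithmetic invariant, with the goal that the level component rules out exp triples and $\psi$ rules out the residual mono Schur triples which would otherwise share the same level-residue as an exp triple.

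The main obstacle, and the technical heart of the proof, is the joint choice of $K$ and $\psi$ together with the verification that the two constraints are incompatible in every colour class. Naive single-component choices of $\psi$ (e.g.\ a $\lfloor\log_2 n\rfloor$-residue or a single $p$-adic valuation $v_p(n)\bmod m$) each fail in isolation, since one can tune the pair $(a,b)$ so that $a^b$ lands in any prescribed residue; so $\psi$ must combine several such invariants. The endgame would be a Diophantine case analysis on $\ell(b)\bmod K$: in each case one extracts the congruence that an exp triple in the given colour class must satisfy and shows that this congruence is incompatible with any congruence forced on $x+y$ by a Schur triple in the same class. I anticipate that establishing this incompatibility, rather than the scale argument, will form the bulk of the technical work.
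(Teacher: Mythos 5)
Your proposal is an outline of a strategy, not a proof: the construction of the auxiliary invariant $\psi$ and the ``Diophantine case analysis'' that is supposed to certify that every colour class excludes one of the two types of triple are left entirely unspecified, and you yourself flag this as ``the main obstacle'' and ``the technical heart of the proof.'' That is precisely where the content of the theorem lies, so as it stands the argument has a genuine gap. There is also a point where the sketch is weaker than it reads: your first observation only controls $\ell(y)$ versus $\ell(x+y)$. If $x\ll y$ (which is perfectly consistent with $\ell(x)\equiv\ell(y)\pmod K$ whenever $\ell(x)\le \ell(y)-K$), then the Schur triple need not ``live in at most two adjacent levels,'' and the level-residue component imposes essentially no constraint beyond $\ell(x+y)=\ell(y)$. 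Likewise the constraint from the level residues on an exponential triple only pins down $\lfloor\log_2 b\rfloor\bmod K$ up to a $\pm 1$ slop, which leaves a large, structured family of exponential triples untouched; whatever $\psi$ is, it would have to cope with all of these, and you give no indication of how.

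For comparison, the paper's proof is short and takes a completely different route, making no use of iterated logarithms or scale separation. It colours $\mathbb{N}$ by the pair $(x\bmod 4,\ l(x)\bmod 4)$, where $l(x)$ is the largest $b$ with $x=a^b$. In a putative monochromatic configuration $x,y,x+y,a,b,a^b$, the first coordinate forces $x\equiv 0\ (4)$ and hence $b\equiv 0\ (4)$; since $l(a^b)=b\,l(a)$, the second coordinate then forces $l(x)\equiv l(y)\equiv l(x+y)\equiv 0\ (4)$, so $x,y,x+y$ are all fourth powers, contradicting Fermat's Last Theorem for exponent $4$. The two coordinates of the colouring are \emph{coupled} through the identity $l(a^b)=b\,l(a)$, which is what lets the Schur constraint and the exponential constraint reinforce one another; your proposal keeps the level component and the secondary component independent, which is why the incompatibility must be re-derived from scratch in each case, and why nailing down $\psi$ is so difficult. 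If you want to pursue your approach you would need to actually produce $K$ and $\psi$ and carry out the case analysis; but I would recommend instead looking for a multiplicative invariant $l$ satisfying a clean identity under $a\mapsto a^b$, which is what makes the paper's argument close.
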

More generally, we say that two partition regular patterns $\mathcal{A},\mathcal{B} \subseteq \mathcal{P}(\mathbb{N})$ are \emph{consistent} if for every finite colouring of $\mathbb{N}$, one can find $A \in \mathcal{A}$ and $B \in \mathcal{B}$ for which $f$ is constant on both $A$, $B$ and $f(A) = f(B)$.
\paragraph{}
So we have the following situation; while $x,y,x+y$ and $u,v,u\cdot v$ are known to be consistent (by a result of Hindman\footnote{Hindman actually showed much more. He showed that, in a given finite colouring, one can find $x_1,x_2,\ldots$ and $y_1,y_2,\ldots$ so that $FS(x_1,x_2,\ldots)$ and $FP(y_1,y_2, \cdots )$ lie in the same colour class.} \cite{HindSumsAndProds}) and our main theorem certainly implies that $a,b,a^b$ and $u , v , u \cdot v$ are consistent, it is \emph{not} true that $x,y,x+y$ and $a,b, a^b $ are consistent.
\paragraph{}
In Section~\ref{sec:Questions}, we outline some further questions and conjectures in this area.

\section{Monochromatic Exponential Triples} \label{sec:MonoExpTriple}
The aim of this section is to give a short, self-contained proof that for every finite colouring of $\mathbb{N}$ one can find $a,b>1$ such that the triple $a,b,a^b$ is monochromatic. The purpose of this is two-fold. First, to present a short solution to Sisto's question and, second, to give a flavour for the style of proof that we adopt in this paper. 
\paragraph{}
We use an infinitary approach throughout this paper despite the fact that we could have used finitary arguments and obtained explicit bounds on the various parameters. This approach streamlines the presentation and makes it more readable. 
\paragraph{}
We shall require van der Waerden's classical theorem: for every $l,k \in \mathbb{N}$, there exists an integer $W_k(l)$ so that every $k$-colouring of $[W_k(l)]$ admits a monochromatic progression of length $l$.
\paragraph{}
Let $k \in \mathbb{N}$, $X$ be a countable set and $f : X \rightarrow [k]$ be a $k$-colouring. Also, for each $M \in \mathbb{N}$, let $f_{M} : X \rightarrow [k]$ be a $k$-colouring. We say that the sequence $\{ f_M \}$ \emph{converges} to $f$ and write $f_M \rightarrow f$ if for every \emph{finite} set $Y \subseteq X$ there exists some $M^* \in \mathbb{N}$ so that for all $M \geq M^*$ we have $f_M(x) = f(x)$ for all $x \in Y$. The following basic fact on sequences of colourings is often referred to as the \emph{compactness property}.

\begin{fact} \label{Compactness} Let $X$ be a countable set, $k \in \mathbb{N}$, and let $f_{M} : X \rightarrow [k]$ be a $k$-colouring, for each $M \in \mathbb{N}$. There exists an increasing sequence $\{M(n)\}_n$ and a colouring $f: X \rightarrow [k]$ so that 
\[ f_{M(n)} \rightarrow f  
\]  as   $n \rightarrow \infty$. \qed
\end{fact}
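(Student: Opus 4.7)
The plan is to prove Fact~\ref{Compactness} by the standard diagonal argument: since $X$ is countable we can enumerate $X = \{x_1, x_2, \ldots\}$, and then build a nested sequence of infinite index sets by repeatedly applying the pigeonhole principle, one point of $X$ at a time, before extracting the diagonal subsequence. This is really just the statement that the product space $[k]^X$ is sequentially compact when $[k]$ carries the discrete topology and $X$ is countable, but rather than invoking Tychonoff I would give the proof directly.

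More concretely, first I would enumerate $X$ as $x_1,x_2,\ldots$. Then I would construct, by induction on $j \geq 0$, an infinite set $I_j \subseteq \mathbb{N}$ and a colour $c_j \in [k]$ so that $I_0 = \mathbb{N}$, $I_{j+1} \subseteq I_j$, and $f_M(x_{j+1}) = c_{j+1}$ for every $M \in I_{j+1}$. The inductive step is just pigeonhole: given the infinite set $I_j$, the map $M \mapsto f_M(x_{j+1})$ takes only $k$ values on $I_j$, so some colour $c_{j+1}$ is attained infinitely often, and we let $I_{j+1}$ be the corresponding (infinite) preimage. Define $f : X \to [k]$ by $f(x_j) = c_j$.

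To finish, I would choose $M(n)$ to be the $n$-th smallest element of $I_n$; this sequence is strictly increasing because $M(n) \in I_n$ and $M(n+1)$ is chosen larger than $M(n)$ from $I_{n+1} \subseteq I_n$ (if necessary, pick it as the smallest element of $I_{n+1}$ exceeding $M(n)$, which exists as $I_{n+1}$ is infinite). For any finite $Y \subseteq X$, let $j^*$ be the largest index of an element of $Y$ in our enumeration; then for every $n \geq j^*$ we have $M(n) \in I_n \subseteq I_{j^*}$, so $f_{M(n)}$ agrees with $f$ on $\{x_1,\ldots,x_{j^*}\} \supseteq Y$, which gives the required convergence $f_{M(n)} \to f$.

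There is no real obstacle here; the only things to be careful about are (i) making sure the $M(n)$ are strictly increasing (handled by always picking $M(n+1)$ larger than $M(n)$ from the still-infinite $I_{n+1}$), and (ii) that the nesting $I_{n+1} \subseteq I_n$ is strict enough to guarantee agreement on all initial segments, which is the whole point of the diagonalisation.
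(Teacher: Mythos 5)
Your proof is correct and complete. The paper itself offers no proof of Fact~\ref{Compactness} — it is stated with an immediate \texttt{\textbackslash qed} as a well-known compactness principle (sequential compactness of $[k]^X$ for countable $X$), so there is nothing to compare against; your nested-pigeonhole plus diagonalisation argument is the standard one and fills that gap correctly. One tiny remark: your two descriptions of $M(n)$ (``the $n$-th smallest element of $I_n$'' versus ``the smallest element of $I_{n+1}$ exceeding $M(n)$'') are not literally the same recipe, but either one works, since $I_{n+1}\subseteq I_n$ forces the $n$-th smallest element of $I_n$ to be strictly increasing in $n$; just pick one phrasing and stick with it.
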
 

\paragraph{}
Given colourings $f_1, \ldots ,f_r : \mathbb{N} \rightarrow [k]$, we call a sequence of colours
$c_1, \ldots ,c_r \in [k]$ \emph{large} for $f_1, \ldots ,f_r$ if, for each $M \in \mathbb{N}$, we can
find a progression $P(M)$ of length $M$ such that $f_i(P(M)) = c_i$, for each $i \in [r]$. The following is an easy consequence of van der Waerden's theorem. 

\begin{lemma} \label{lem:ExtentionLemma}
Let $r,k \in \mathbb{N}$, $f_1,\ldots,f_r,f_{r+1} : \mathbb{N} \rightarrow [k]$ be $k$-colourings, and let $c_1,\ldots,c_r \in [k]$. If $c_1,\ldots,c_r$ is large with respect to $f_1,\ldots,f_r$ then there exists a colour $c_{r+1} \in [k]$ so that $c_1,\ldots,c_{r+1}$ is large with respect to $f_1,\ldots,f_{r+1}$. 
\end{lemma}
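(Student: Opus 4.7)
My plan is to combine the hypothesis of largeness (which gives, for every length $M$, an AP on which $f_1,\ldots,f_r$ take the prescribed values $c_1,\ldots,c_r$) with van der Waerden's theorem applied to $f_{r+1}$ restricted to such an AP, and then use pigeonhole on the resulting monochromatic colour of $f_{r+1}$ to pick out a single $c_{r+1}$ that works arbitrarily often.

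In detail, fix any target length $N \in \mathbb{N}$ and set $M = W_k(N)$. Since $c_1,\ldots,c_r$ is large for $f_1,\ldots,f_r$, there exists an arithmetic progression $P$ of length $M$ with $f_i(P) = \{c_i\}$ for every $i \in [r]$. Index the terms of $P$ by $[M] = [W_k(N)]$, and consider the induced $k$-colouring of $[M]$ given by $f_{r+1}$. By van der Waerden's theorem there is a sub-progression $Q \subseteq P$ of length $N$ on which $f_{r+1}$ is constant; say $f_{r+1}(Q) = \{c(N)\}$. Since $Q$ is a sub-progression of $P$, it is itself an arithmetic progression in $\mathbb{N}$ of length $N$, and by construction $f_i(Q) = \{c_i\}$ for $i \in [r]$.

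The function $N \mapsto c(N)$ takes values in the finite set $[k]$, so by the pigeonhole principle there is a colour $c_{r+1} \in [k]$ with $c(N) = c_{r+1}$ for infinitely many $N$. Given any $M^\ast \in \mathbb{N}$, choose such an $N \geq M^\ast$; the corresponding progression $Q$ has length $N \geq M^\ast$ and satisfies $f_i(Q) = \{c_i\}$ for every $i \in [r+1]$. Taking an initial sub-progression of $Q$ of length exactly $M^\ast$ yields the progression required to witness largeness of $c_1,\ldots,c_{r+1}$ at length $M^\ast$. Hence $c_1,\ldots,c_{r+1}$ is large for $f_1,\ldots,f_{r+1}$.

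There is no real obstacle here beyond the bookkeeping: the key moves are (i) picking the length $M$ large enough for van der Waerden to force a long monochromatic sub-progression for $f_{r+1}$ \emph{inside} the AP handed to us by largeness, and (ii) using pigeonhole on the countably many output colours $c(N)$ to extract a single colour $c_{r+1}$ valid for arbitrarily long progressions.
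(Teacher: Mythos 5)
Your proof is correct and follows essentially the same route as the paper: apply van der Waerden to $f_{r+1}$ inside the long monochromatic progression provided by largeness, then pigeonhole over the resulting colours to extract $c_{r+1}$. The only cosmetic difference is your explicit final step of truncating to length $M^\ast$, which the paper leaves implicit.
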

\begin{proof}
Let $M \in \mathbb{N}$ be a parameter. Use the fact that $c_1,\ldots,c_r$ is large with respect to $f_1,\ldots,f_r$, to find a progression $P(M)$ of length $W_k(M)$
(the $k$-colour, length-$M$ van der Waerden number), so that $f_i(P(M)) = c_i$, for all $i \in [r]$. We apply van der Waerdens's theorem to the progression $P(M)$ coloured with $f_{r+1}$, to obtain a progression $P'(M) \subseteq P(M)$, of length $M$, which is monochromatic with respect to $f_{r+1}$. Therefore $P'(M)$ is monochromatic with respect to each colouring $f_1,\ldots,f_{r+1}$. Now apply this argument for each $M \in \mathbb{N}$. Some colour is obtained infinitely often as the colour $f_{r+1}(P(M))$. Choose $c_{r+1}$ to be this colour. Thus $c_1,\ldots,c_{r+1}$ is large with respect to $f_1,\ldots,f_{r+1}$.\end{proof}

For $d \in \mathbb{N}$, and a $k$-colouring $f : \mathbb{N} \rightarrow [k]$, we define the colouring $f(d;\ \cdot \ ) :\mathbb{N} \rightarrow [k]$, by $f(d;x) = f\left(2^{d2^x} \right) $, for $x \in \mathbb{N}$. The following lemma is the core of the proof of Theorem~\ref{thm:MonoExpTriple}. We shall refer to the itemized conclusions and hypotheses of our lemmas as ``Conclusions'' and ``Hypothesises'', throughout.  Notice that from Conclusion~\ref{Item:distinctCols} in Lemma~\ref{lem:MainLemMonoExpTriples}, we immediately obtain a contradiction if we choose $r > k$.

\begin{lemma} \label{lem:MainLemMonoExpTriples} For $k \in \mathbb{N}$, suppose that $f : \mathbb{N} \rightarrow [k] $ is a $k$-colouring admitting no monochromatic exponential triple. Then, for any $r \in \mathbb{N}$, we may find colours $c_1,\ldots,c_r \in [k]$, and colourings 
$\tilde{f}_1,\ldots,\tilde{f}_r : \mathbb{N} \rightarrow [k] $ so that the following hold.
\begin{enumerate}
\item The sequence of colours $c_1, \ldots c_r$ is large with respect to $\tilde{f}_1, \ldots \tilde{f}_r$.

\item  For each $i \in [r]$ there exists a sequence of positive integers $\{ d_i(N) \}_N$ for which 
\[   f(d_i(N);\ \cdot\ ) \rightarrow \tilde{f}_i 
\] as $N \rightarrow \infty$.
 
\item  \label{Item:distinctCols} The colours $c_1,\ldots,c_r$ are distinct. 
\end{enumerate}
\end{lemma}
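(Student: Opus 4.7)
The plan is to prove the lemma by induction on $r$. The base case $r=1$ should be essentially immediate: take $d_1(N)\equiv 1$ so that $f(d_1(N);\,\cdot)$ is the fixed colouring $\tilde{f}_1(x)=f(2^{2^x})$; van der Waerden's theorem then supplies a colour $c_1$ that is large for $\tilde{f}_1$, and the singleton $c_1$ is vacuously distinct.

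For the inductive step, assume we have built $c_1,\dots,c_r$, $\tilde{f}_1,\dots,\tilde{f}_r$, and $d_1,\dots,d_r$ satisfying the three conclusions. To extend, I would use largeness to produce, for each $N$, an arithmetic progression $P(N)$ whose length grows with $N$ and on which $\tilde{f}_i$ equals $c_i$ for every $i\leq r$. Pick any element $y_N\in P(N)$ and choose an index $M(N)$ large enough that $f(d_i(M(N));\,\cdot)$ agrees with $\tilde{f}_i$ on $P(N)$ for each $i$, then set $d_{r+1}(N)=d_1(M(N))\cdot 2^{y_N}$. Expanding the definition of $f(d;\cdot)$ yields
\[ f(d_{r+1}(N);\,x)=f\!\left(2^{\,d_1(M(N))\cdot 2^{y_N+x}}\right)=f(d_1(M(N));\,y_N+x), \]
so $f(d_{r+1}(N);\,\cdot)$ is a shift of $f(d_1(M(N));\,\cdot)$. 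Applying the compactness property (Fact~\ref{Compactness}) along a subsequence yields a colouring $\tilde{f}_{r+1}$, and Lemma~\ref{lem:ExtentionLemma} produces a colour $c_{r+1}$ for which $c_1,\dots,c_{r+1}$ is large for $\tilde{f}_1,\dots,\tilde{f}_{r+1}$, establishing Conclusions~1 and 2.

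The crux is Conclusion~3: showing $c_{r+1}\notin\{c_1,\dots,c_r\}$. Suppose for contradiction that $c_{r+1}=c_j$ for some $j\leq r$, and select a joint monochromatic progression $R$ on which $\tilde{f}_\ell(R)=c_\ell$ for every $\ell\leq r$ and $\tilde{f}_{r+1}(R)=c_j$. For $x\in R$ and sufficiently large $N$, the triple
\[ a=2^{d_1(M(N))},\qquad b=2^{y_N+x},\qquad a^b=2^{d_1(M(N))\cdot 2^{y_N+x}} \]
satisfies $f(a^b)=\tilde{f}_{r+1}(x)=c_j$; if in addition $f(a)=f(b)=c_j$, then $a,b,a^b$ is a monochromatic exponential triple, contradicting the hypothesis on $f$ and closing the induction.

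The main obstacle is exactly the last sentence: arranging that $f(2^{d_1(M(N))})$ and $f(2^{y_N+x})$ also land in colour $c_j$. The first amounts to controlling the auxiliary colouring $h(z):=f(2^z)$ on the sequence $d_1(M(N))$, while the second asks for some $x\in R$ with $h(y_N+x)=c_j$. My intention is to weave $h$ into the inductive hypothesis by feeding it to Lemma~\ref{lem:ExtentionLemma} as an extra colouring alongside $\tilde{f}_1,\dots,\tilde{f}_r$, and to enrich the construction so that the values $h(d_i(N))$ lie in a prearranged colour class. A pigeonhole over the at most $k$ possible values of $h$'s constant colour on successively refined progressions, combined with the ``no monochromatic exponential triple'' assumption, should force one of the equalities that closes the argument and hence force $c_{r+1}$ to be new. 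This alignment step is where I expect essentially all of the real work to live.
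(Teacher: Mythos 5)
Your base case, overall induction structure, and appeal to largeness, compactness, and Lemma~\ref{lem:ExtentionLemma} all match the paper. But there is a genuine gap in the distinctness step, which you yourself flag; what is missing is not a routine alignment but the specific algebraic set-up that makes the argument close. With your choices $a=2^{d_1(M(N))}$ and $b=2^{y_N+x}$, only $f(a^b)$ is controlled by the double-exponential structure: $a^b$ has the form $2^{d\cdot 2^z}$, so $f(a^b)=f(d;z)$ and convergence applies, but $a$ and $b$ themselves do not have that form (their exponents lack the $2^z$ factor). Thus $f(a)=h(d_1(M(N)))$ and $f(b)=h(y_N+x)$ are values of an auxiliary colouring $h(z)=f(2^z)$ on sequences you have no handle on, and a pigeonhole over possible constant colours of $h$ does not force those values to equal the particular $c_j$ you supposed $c_{r+1}$ collides with.

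The paper's proof avoids this by taking $d_{r+1}(N)$ to be \emph{the common difference of the monochromatic progression} $P(N)$ itself (not a product $d_1(M(N))\cdot 2^{y_N}$), writing $P(N)=\{a(N)+xd_{r+1}(N):x\in[0,N-1]\}$, and then, assuming $\tilde f_{r+1}(x_0)=c_t$, choosing
\[
a = 2^{d_t(N')\,2^{a(N)}},\qquad b = 2^{d_{r+1}(N)\,2^{x_0}},\qquad a^b = 2^{d_t(N')\,2^{a(N)+d_{r+1}(N)2^{x_0}}}.
\]
All three are of the form $2^{d\cdot 2^z}$, so $f$ on each is controlled: $f(b)=f(d_{r+1}(N);x_0)\to\tilde f_{r+1}(x_0)=c_t$; $f(a)=f(d_t(N');a(N))\to\tilde f_t(a(N))=c_t$ since $a(N)\in P(N)$; and, crucially, $a(N)+d_{r+1}(N)2^{x_0}$ is \emph{another point of} $P(N)$ (here the common-difference choice is what you were missing), so $f(a^b)=f(d_t(N');a(N)+d_{r+1}(N)2^{x_0})\to\tilde f_t(\,\cdot\,)=c_t$ as well. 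This yields a monochromatic exponential triple in colour $c_t$, a contradiction, and in fact shows the stronger fact that the \emph{entire} colouring $\tilde f_{r+1}$ avoids $\{c_1,\dots,c_r\}$, after which Lemma~\ref{lem:ExtentionLemma} makes $c_{r+1}$ automatically new. Your sketch of ``weaving $h$ into the induction'' would need to reproduce precisely this exponent bookkeeping; as written it does not, and the step you call the crux remains open.
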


\begin{proof} 
We apply induction on $r$. If $r = 1$ we choose $d_1(N) = 1$, for all $N \in \mathbb{N}$, and set $\tilde{f}_1 = f_1$. We therefore have the trivial convergence $f(1 ;\ \cdot\ ) \rightarrow \tilde{f}_1$. Now, by van der Waerden's theorem, there exists $c_1 \in [k]$ that is large with respect to $\tilde{f}_1$. This proves the lemma for $r=1$.
\paragraph{}
For the induction step, assume we have found colourings $\tilde{f}_1, \ldots ,\tilde{f}_r$ with associated colours $c_1, \ldots ,c_{r}$, which satisfy the statement of the lemma. For $M \in \mathbb{N}$, we use the fact that the sequence $c_1,\ldots,c_r$ is large with respect to $\tilde{f}_1,\ldots ,\tilde{f}_r$ to find a progression $P(M)$, of length $M$ for which $f_i(P(M)) = c_i$, for all $i \in [r]$. Letting $d(M)$ be the common difference of $P(M)$, we pass to a subsequence of $\{d(M)\}_{M}$ so that the colouring $f(d(M); \ \cdot\ )$ tends to a limit. We take this subsequence as our sequence $\{ d_{r+1}(M) \}$ and take the resulting limit point to be $\tilde{f}_r$. Thus we have the convergence $f(d_{r+1}(N);\ \cdot\ ) \rightarrow \tilde{f}_{r+1}$ as $N \rightarrow \infty$, by construction. 
\paragraph{}
We now claim that $\tilde{f}_{r+1}(x) \not\in \{ c_1,\ldots,c_r \}$, for all $x \in \mathbb{N}$. Suppose that $\tilde{f}_{r+1}(x_0) = c_t$, for some $x_0 \in \mathbb{N}$ and $t \in [r]$. Since we have the convergence $f(d_{r+1}(N); \ \cdot \ )  \rightarrow \tilde{f}_{r+1}$, we may choose some sufficiently large $N$ so that $N \geq 2^{x_0}$ and $\tilde{f}_{r+1}(x) = f(d_{r+1}(N);x)$, for all $x \in [x_0]$. Therefore   
\[ c_t = \tilde{f}_{r+1}(x_0) = f(d_{r+1}(N);x_0) =  f\left( 2^{d_{r+1}(N)2^{x_0}} \right).
\] 
Now recall that $d_{r+1}(N)$ is the common difference of a progression $P(N) = \{ a(N) + xd_{r+1}(N) : x \in [0,N-1] \}$ of length $N \geq 2^{x_0}$ with the property that $\tilde{f}_t(P(N)) = c_t$. With this in mind, we choose a sufficiently large integer $N'$ so that $f(d_t(N'); x) = \tilde{f}_t(x) $, for all $x \in \left[ a(N) + d(N)2^{x_0} \right]$. We therefore have 
\[ c_t  =  \tilde{f}_t(a(N)) = f(d_t(N'); a(N)) = f\left( 2^{d_t(N')2^{a(N)}} \right) 
\] and 
\[ c_t = \tilde{f}_{t}\left(a(N) + d_{r+1}(N)2^{x_0} \right) = f\left(d_t(N'); a(N) + d_{r+1}(N)2^{x_0} \right) \]
\[ =  f\left( 2 \star \left( d_t(N') 2 \star \left( a(N) + d_{r+1}(N)2^{x_0} \right) \right) \right). \] However, this yields a contradiction, as we may find a exponential triple, monochromatic in $c_t$, by choosing
\[ a = 2^{ d_t(N')2^{a(N)}} , b = 2^{d_{r+1}(N)2^{x_0}} \]
and then noting that 
\[ a^b =  2 \star \left( d_t(N')2 \star \left( a(N) + d_{r+1}(N)2^{x_0} \right) \right). \]
Therefore, $\tilde{f}_{r+1}(x) \not\in \{ c_1,\ldots,c_r \}$.

\paragraph{}
We complete the proof of the lemma by applying Lemma~\ref{lem:ExtentionLemma} to find a colour $c_{r+1}$ so that the sequence of colours $c_1,\ldots,c_{r+1}$ is large with respect to $\tilde{f}_1,\ldots,\tilde{f}_{r+1}$. As $\tilde{f}_{r+1}(x) \not\in \{ c_1,\ldots,c_r \}$ it follows that $c_{r+1}$ is distinct from $c_1,\ldots,c_r$. \end{proof}

Theorem \ref{thm:MonoExpTriple} now easily follows by assuming that a given $k$-colouring, $k \in \mathbb{N}$, admits no monochromatic exponential triple and then applying the above lemma with the choice of $r > k$.

\section{Definitions and Preliminaries} \label{sec:DefsAndPrelims}
%Let $X$ be a finite set and $k \in \mathbb{N}$. We call a function $f  : \mathbb{N} \rightarrow X$ a $k$-\emph{colouring} of the positive integers if the range of $f$ is at most $k$ and refer to $f$ as a \emph{finite colouring} if $f$ has finite range. We call the values of $X$ \emph{colours} and say that a set $A \subseteq \mathbb{N}$ is \emph{monochromatic with respect to} $f$ if all the elements of $A$ are given the same colour by $f$. We simply say that $A$ is \emph{monochromatic} if $f$ is clear from context. We say that a collection $\mathcal{C} \subseteq \mathcal{P}(\mathbb{N})$ is \emph{partition regular} if for every finite colouring $f$ of $\mathbb{N}$, there exists some monochromatic $A \in \mathcal{C}$. We will often refer to $\mathcal{C}$ as a \emph{pattern}.

Given integers $0 \leq x \leq y $, define $[x,y] = \{x, x+1, \ldots, y \}$ and $[y] = \{1,\ldots, y\}$. We denote by $\mathbb{N}_0$ the set $\mathbb{N} \cup \{0\}$. If $X$ is a set and $k \in \mathbb{N}$, we let $X^{(k)}$ denote the collection of all subsets of $X$ of cardinality $k$ and let $X^{(\leq k)}$ denote the collection of subsets of cardinality \emph{at most} $k$. Suppose that $n,d \in \mathbb{N}$, $A,B \subseteq \mathbb{Z}^n$, and that $t \in \mathbb{Z}^n$. We define $A + t = \{ a + t : a \in A \}$, $d A = \{ da : a \in A \}$, and $A + B = \{ a + b : a \in A ,b \in B \}$.  
\paragraph{}
For positive integers $n,T$, define an $n$-dimensional $T$\emph{-box} to be a set of the form $[0,T]^d + s \subseteq \mathbb{N}_0^n$, where $s \in \mathbb{N}_0^n$. We refer to $s$ as the \emph{corner} of the $T$ box (it is clear that the corner is uniquely defined). For $d,L \in \mathbb{N}$, define a \emph{grid of common difference} $d$, \emph{length} $L$, and \emph{dimension} $n$, to be a set of the form $ s + d[0,L]^n$, where $s \in \mathbb{N}_0^n$.
\paragraph{}
We also define a \emph{fat grid} of \emph{thickness} $T$, \emph{common difference} $d$, \emph{length} $L$, and \emph{dimension} $n$ to be a subset of $\mathbb{N}_0^n$ of the form $s + [0,T]^n + d[0,L]^n$, where $s \in \mathbb{N}^n_0$ and $d > T$. If $G$ is a fat grid, as above, we denote by $B(G)$ the box $s + [0,T]^n $ ($B(G)$ is uniquely determined as $d > T$). Finally, we shall define the \emph{corner} of the fat grid $G$ to be $s$. Note the corner of a gird $G$ coincides with the corner of its box $B(G)$. 

\paragraph{}

In what follows, we shall make repeated use of the following theorem of Gallai \cite{GRS},\cite{Ra}, which gives a natural generalization of van der Waerden's theorem \cite{vdW} to higher dimensions. 
\begin{theorem} \label{thm:Gallai} Let $n \in \mathbb{N}$, every finite colouring of $\mathbb{N}^n$ admits arbitrary long monochromatic grids of dimension $n$. 
\end{theorem}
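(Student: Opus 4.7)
I plan to derive this classical theorem (often also attributed to Gallai--Witt) from van der Waerden's theorem by induction on the dimension $n$. By a compactness argument in the style of Fact~\ref{Compactness}, it suffices to establish the finite quantitative version: for every $k, L, n \in \mathbb{N}$ there exists $N = N(k, L, n)$ such that any $k$-colouring of $[N]^n$ contains a monochromatic $n$-dimensional grid of length $L$. The base case $n = 1$ is van der Waerden's theorem itself.

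For the inductive step $n - 1 \to n$, fix a $k$-colouring $\chi : [N]^n \to [k]$ and decompose $[N]^n$ into horizontal layers $[N]^{n-1} \times \{y\}$. The plan is first to use van der Waerden to align many layers so that they carry identical colouring patterns on a fixed cube, and then to invoke the inductive hypothesis within one such layer. Concretely, fix an auxiliary parameter $M$; the map $\phi : [N] \to [k]^{[M]^{n-1}}$ defined by $\phi(y) = \chi(\cdot, y)\big|_{[M]^{n-1}}$ is a colouring of $[N]$ by at most $k^{M^{n-1}}$ colours, so van der Waerden's theorem yields an arithmetic progression $y_0, y_0 + e, \ldots, y_0 + L^* e$ of length $L^* + 1$ on which $\phi$ is constant, equal to some fixed $\psi : [M]^{n-1} \to [k]$. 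The inductive hypothesis in dimension $n - 1$, applied to $\psi$, then produces a monochromatic $(n-1)$-dimensional grid $s' + e'[0, L]^{n-1}$ inside $[M]^{n-1}$. Combined, one obtains a monochromatic product configuration $(s' + e'[0,L]^{n-1}) \times (y_0 + e[0, L^*])$.

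The last step, and the one I expect to be the main obstacle, is to extract from this product a genuine $n$-dimensional grid with a \emph{single} common difference and length $L$. The natural choice is $d = \mathrm{lcm}(e, e')$ followed by sub-sampling in both directions, but a naive implementation creates a circular parameter dependence: the van der Waerden step $e$ is bounded only by the van der Waerden number, which itself grows rapidly in $M$ and $L^*$, while $e'$ is not known until $\psi$ is revealed. I would break the circularity by applying the inductive hypothesis not to $\psi$ directly but to its restriction to the sub-lattice $e\mathbb{Z}^{n-1} \cap [M]^{n-1}$, enlarging $M$ so that this sub-lattice contains a cube of side $N(k, L, n-1)$. This forces the returned horizontal step to be a multiple of $e$, so no horizontal sub-sampling is needed, and only a vertical sub-sampling by a factor bounded purely in terms of the inductive parameter $N(k, L, n-1)$ is required, giving an $n$-grid of length at least $L$ provided $L^*$ was chosen with that much slack. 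The parameters can then be chosen in the order $L^*$, then $M$, then $N$ from van der Waerden, without any further circularity.
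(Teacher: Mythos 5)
The paper does not prove this result; Theorem~\ref{thm:Gallai} is cited as classical (from Graham--Rothschild--Spencer and Rado), so there is no internal argument to compare your attempt against. Your proposal, however, contains a genuine gap in the parameter bookkeeping.

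You correctly isolate the real difficulty --- the horizontal difference $e'$ (from the inductive hypothesis in dimension $n-1$) and the vertical difference $e$ (from van der Waerden) need not match --- and the idea of applying the inductive hypothesis inside the sub-lattice $e\mathbb{Z}^{n-1}\cap[M]^{n-1}$ so that $e\mid e'$ is reasonable. But the stated parameter order ``$L^*$, then $M$, then $N$'' does not break the cycle. For the sub-lattice $e\mathbb{Z}^{n-1}\cap[M]^{n-1}$ to contain (in rescaled coordinates) a cube of side $N(k,L,n-1)$ you need $M\ge e\cdot N(k,L,n-1)$, whereas the only control on $e$ is $e< N/L^*$ with $N=W\bigl(k^{M^{n-1}},L^*+1\bigr)$ the van der Waerden number for the product colouring. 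Since $W$ grows enormously faster than linearly in the number of colours, and hence vastly faster than $M$ itself, the bound $e< N/L^*$ can never force $e\le M/N(k,L,n-1)$ once $M$ is fixed before $N$: the dependence $M\to N\to e\to M$ remains circular for every admissible choice, and an induction on dimension of this type --- with one application of van der Waerden supplying the new coordinate --- does not close. The standard derivations of Gallai's theorem avoid this by going either (i) through the Hales-Jewett theorem \cite{HalesJewett} (already used in the proof of Lemma~\ref{BasicRamseyLemma}): take the alphabet $A=[0,L]^n$, map words in $A^m$ to $\mathbb{N}^n$ by coordinate-wise summation, and observe that a monochromatic combinatorial line projects to a monochromatic grid $s+d[0,L]^n$ with $d$ equal to the number of active coordinates; or (ii) via a colour-focusing double induction on the \emph{cardinality} of the finite configuration sought, rather than on the ambient dimension.
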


We shall need another well-known theorem that lies in the canon of classical Arithmetic Ramsey Theory: the Folkman-Sanders ``Finite Sums Theorem'',  \cite{GRS},\cite{Sanders},\cite{Ra'}.  

\begin{theorem} \label{thm:finiteSumsThm} For every $k,m \in \mathbb{N}$ there exists an integer $N(k,m)$ such that every $k$-colouring of $[N(k,m)]$ admits $x_1,\ldots,x_m \in [N(k,m)]$ for which $FS(x_1,\ldots,x_m)$ is monochromatic. 
\end{theorem}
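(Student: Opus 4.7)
My plan is to prove a strengthened statement by induction on $m$, where the stronger form lets me grow the finite sum set by one generator at each step, using the common difference of a monochromatic arithmetic progression as the next generator.

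Specifically, I would prove the following: for every $k, m, L \in \mathbb{N}_0$, there exists $N$ such that every $k$-colouring of $[N]$ admits positive integers $y_1, \ldots, y_m$ and $d$ for which the set
\[ S_m(L) = \left\{ td + \sum_{i \in I} y_i : t \in [0, L], \, I \subseteq [m] \right\} \setminus \{0\} \]
is monochromatic. Taking $t = 0$ and $I \neq \emptyset$ recovers $FS(y_1, \ldots, y_m)$, so this stronger statement immediately implies Theorem~\ref{thm:finiteSumsThm}.

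The base case $m = 0$ just asks that $S_0(L) = \{d, 2d, \ldots, Ld\}$ be monochromatic, which is van der Waerden's theorem. For the inductive step, I would apply the hypothesis at level $m-1$ with parameter $L + 1$, producing $y_1, \ldots, y_{m-1}$ and $d$ with $S_{m-1}(L+1)$ monochromatic, and then \emph{set $y_m := d$} (reusing the same $d$). For any element $td + \sum_{i \in I} y_i$ of $S_m(L)$: if $m \notin I$, it already lies in $S_{m-1}(L+1)$ since $[0,L] \subseteq [0, L+1]$; if $m \in I$, it rewrites as $(t+1)d + \sum_{i \in I \setminus \{m\}} y_i$, which lies in $S_{m-1}(L+1)$ since $t + 1 \in [1, L+1]$. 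Hence $S_m(L) \subseteq S_{m-1}(L+1)$, and monochromaticity is inherited. Quantitatively one picks $N$ sufficiently large relative to the bound at level $m-1$ so that $S_m(L) \subseteq [N]$.

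The main obstacle, in my view, is pinning down the correct strengthening: a naive induction asking only for a monochromatic $FS$-set stalls immediately, since to extend by one new $y_m$ one would need every translate $y_m + \sum_{i \in I} y_i$ to fall in the same colour class as the old sums, and the hypothesis provides no such resource. Bundling a long monochromatic AP of common difference $d$ into the statement supplies exactly this reserve: the choice $y_m = d$ converts each new translate $y_m + \sum_{i \in I} y_i$ into the old expression $d + \sum_{i \in I} y_i$, which already sat inside the monochromatic $S_{m-1}(L+1)$, so a single step of the AP is ``spent'' per existing sum simultaneously.
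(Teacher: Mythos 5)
Your proposed strengthening is false, which breaks the base case and with it the whole induction. For $m=0$ the set is $S_0(L)=\{d,2d,\ldots,Ld\}$, and you assert that its partition regularity is van der Waerden's theorem. It is not: van der Waerden supplies a monochromatic progression $\{a,a+d,\ldots,a+(L-1)d\}$ with a free translate $a$, whereas $S_0(L)$ forces the progression to start at $d$ itself. Already for two colours this cannot be arranged: under the colouring $c(n)=\lfloor\log_2 n\rfloor\bmod 2$ one has $\lfloor\log_2(2d)\rfloor=\lfloor\log_2 d\rfloor+1$ for every $d$, so $d$ and $2d$ always receive opposite colours and no $d$ makes $\{d,2d\}$ monochromatic. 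Since $\{d,2d,\ldots,Ld\}\subseteq S_m(L)$ for every $m$ (take $I=\emptyset$), your claimed statement is in fact false for every $m$ once $L\geq 2$ and $k\geq 2$.

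The $I=\emptyset$ terms cannot simply be discarded, because your inductive step uses them: when $I=\{m\}$, the element $td+y_m$ becomes $(t+1)d$, which lives in the $I'=\emptyset$ slice of $S_{m-1}(L+1)$. And if you add a free translate $a$ to repair the base case, then $t=0$ recovers $a+FS(y_1,\ldots,y_m)$ rather than $FS(y_1,\ldots,y_m)$, so the theorem no longer follows. The standard way around this is a different strengthening: prove, by induction on $m$, that for every $k$ there is $N$ such that any $k$-colouring $c$ of $[N]$ admits $y_1,\ldots,y_m$ with $c\bigl(\sum_{i\in I}y_i\bigr)$ depending only on $\min I$, for all nonempty $I\subseteq[m]$; the induction step finds a long monochromatic progression $\{a+td : t\in[0,L]\}$, places the new generator at $a$, and obtains the remaining ones as multiples of $d$ by applying the hypothesis to the rescaled colouring $j\mapsto c(jd)$. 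Applying this with $m'=k(m-1)+1$ generators and pigeonholing on the associated colours then yields a monochromatic $FS$-set of size $m$. Your instinct to turn the common difference of a reserve progression into the next generator is exactly right; what must change is the anchoring, so that the reserve progression sits on each existing sum rather than at $0$ --- which is what the ``colour depends only on $\min I$'' formulation achieves.
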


We slightly refine some of the notation introduced in the previous section. Let $X$ be a countable set, $k \in \mathbb{N}$, and let $f_M :X \rightarrow [k]$ be a $k$-colouring, for each $M \in \mathbb{N}$. If $f_M$ tends to a limit $f : X \rightarrow [k]$ we write 
\[ \lim_M f_M  = f 
\] and if $S = \{x_1 < x_2 < \cdots \} \subseteq \mathbb{N}$ define
\[ \lim_{M \in S} f_M = \lim_{n} f_{x_n}, \]
provided the limit on the right-hand side exists.
\paragraph{}
The following is a basic fact about such sequences of colourings. The first conclusion is the a restatement of the \emph{compactness principle} for sequences of finite colourings, as we have seen before, while the latter conclusion is merely an iterated version of the compactness principle, where the colourings are indexed by a multidimensional array. 

\begin{fact} \label{Fact:Compactness}  Let $n,k \in \mathbb{N}$; the following statements hold. 
\begin{enumerate}
\item Every sequence $\{f_M\}_{M}$ of finite colourings $f_M : \mathbb{N}^n \rightarrow [k]$ admits a convergent subsequence. That is, there is a set $S\subseteq \mathbb{N}$ for which the limit 
\[ \lim_{M \in S } f_M, \]  exists.

\item Suppose $m \in \mathbb{N}$, $C_1,\ldots,C_m \subseteq \mathbb{N}$ are infinite sets, and that for each $M_1 \in C_1 , \ldots, M_m \in C_m$ we have a $k$-colouring $f_{M_1,\ldots ,M_m} : \mathbb{N}^n \rightarrow [k]$, then there exist infinite sets $C'_1 \subseteq C_1 ,\ldots, C'_m \subseteq C_m$ so that each of the limits, 
\[ \lim_{M_t \in C'_t} \lim_{M_{t-1} \in C'_{t-1} } \cdots \lim_{M_1 \in C'_1} \ f_{M_1, \ldots, M_m}
\] exist, for each $t \in [r]$ and every choice of $M_{t+1} \in C_{t+1} , \ldots, M_r \in C_r$, if $t < r$. 
\end{enumerate} 
\end{fact}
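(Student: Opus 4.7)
The plan is to establish both conclusions by iterated pigeonhole, noting that Conclusion~(2) is essentially Conclusion~(1) applied carefully along each coordinate in turn. This is the standard ``compactness principle'' for colourings of a countable domain into a finite palette.

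For Conclusion~(1), I would enumerate the countable set $\mathbb{N}^n$ as $x_1, x_2, \ldots$ and build a nested chain $\mathbb{N} \supseteq S_1 \supseteq S_2 \supseteq \cdots$ by pigeonhole, where $S_j$ is an infinite subset of $S_{j-1}$ on which the map $M \mapsto f_M(x_j)$ is constant; such an $S_j$ exists because $f_M(x_j) \in [k]$ takes only finitely many values. Taking the diagonal $S = \{s_1, s_2, \ldots\}$, where $s_j$ is the $j$-th element of $S_j$, produces an infinite set along which $\lim_{M \in S} f_M(x)$ exists for every $x \in \mathbb{N}^n$, giving the desired limit colouring $f : \mathbb{N}^n \to [k]$.

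For Conclusion~(2), the strategy is to iterate Conclusion~(1), extracting the innermost limit first and working outwards. At stage~$1$, for each fixed tuple $\tau = (M_2, \ldots, M_m) \in C_2 \times \cdots \times C_m$ the sequence $\{f_{M_1, \tau}\}_{M_1 \in C_1}$ has by Conclusion~(1) an infinite sub-index set along which it converges. Since there are only countably many tuples $\tau$, I would enumerate them as $\tau^{(1)}, \tau^{(2)}, \ldots$ and recursively refine: starting from $C_1$, at step~$i$ pass to an infinite further subset on which the limit exists for $\tau^{(i)}$; then diagonalise to obtain a single infinite $C'_1 \subseteq C_1$ for which $g^{(1)}_\tau := \lim_{M_1 \in C'_1} f_{M_1, \tau}$ exists for \emph{every} $\tau$. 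Treating $g^{(1)}_\tau$ as a new family of colourings indexed by $\tau$ and repeating this procedure in the variable $M_2$ yields $C'_2 \subseteq C_2$, and so on; after $m$ such rounds I obtain $C'_1, \ldots, C'_m$ with all the nested limits defined for every choice of outer parameters.

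The main ``obstacle,'' if any, is purely bookkeeping: at each of the $m$ stages the diagonal procedure must be carried out uniformly across the countably many remaining outer tuples, so that once the limit over the current coordinate is taken, the resulting colouring is well-defined at every outer parameter value. Because all index sets encountered remain countable and the colour palette stays finite throughout, the diagonalisation is routine and I do not foresee any substantive difficulty beyond keeping the order of limits straight.
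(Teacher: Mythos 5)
Your proof is correct and follows essentially the same approach as the paper: the paper also proves Conclusion~(2) by induction, extracting the innermost limit first and at each stage diagonalising over a countable enumeration of the remaining outer index tuples to produce a single refined index set that works uniformly. The only cosmetic difference is that the paper cites Conclusion~(1) as a known compactness principle without proof, whereas you spell out its standard nested-pigeonhole-plus-diagonal argument.
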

\begin{proof}
We prove the second item by an iterative application of the first item along with a ``diagonlization'' argument. We show, applying induction on $r \in [0,m]$, that we may find sets $C'_1,\ldots,C'_r \subseteq \mathbb{N}$ so that each of the limits  
\[ \lim_{M_t \in C'_t} \cdots \lim_{M_1 \in C'_1} f_{M_1,\ldots,M_m} \] exist, for any $t \in [0,r]$ and $(M_{t+1},\ldots, M_m) \in C_{t+1} \times \cdots \times C_m$. The basis step $r = 0 $ is trivial, so we assume the lemma holds for $r-1\geq 0$ and show that it holds for $r$. Put 
\[ \widetilde{f}_{M_r,\ldots,M_m} = \lim_{M_{r-1} \in C'_{r-1}} \cdots \lim_{M_1 \in C'_1} f_{M_1,\ldots ,M_m }, 
\] to emphasize that the limiting colouring depends only on $M_r,\ldots,M_m$. If $r = m$, we simply apply the Part 1 of the Fact to the colouring $\widetilde{f}_{M_r}$ to find a set $C'_m$ for which the limit exists. Otherwise assume that $r < m$ and enumerate the set 
$C_{r+1} \times \cdots \times C_m = \{ x_n \}_{n \in \mathbb{N}}$. Apply Part 1 of the Fact, iteratively, to construct subsets of $C_r$, $ C(x_1) \supseteq C(x_2) \supseteq \cdots $, with the property that 
\[ \lim_{M_r \in C(x_n)} \widetilde{f}_{M_r,x_n}
 \]exists for each tuple $x_n$ in the enumeration. Now define a ``diagonal'' set $C'_r = \{y_n \}$, by choosing $y_n \in C(x_1) \cap \cdots \cap C(x_n) \not= \emptyset$ to be such that $y_n > y_{n-1}$. This set $C'_r$ satisfies the induction hypothesis and thus we are done, by induction.
\end{proof}

Throughout the rest of this paper, we shall be interested in colourings $f : \mathbb{N}^n \times \mathbb{N} \rightarrow [k] $, where we think of the first $n$ coordinates as grouped together. These coordinates will receive most of the attention, while the $(n+1)$st coordinate will enjoy a somewhat different role. We refer to such colourings as \emph{partitioned colourings}, and write $f: \mathbb{N}^n \times \mathbb{N} \rightarrow [k] $ and $f(x;y)$, where $x \in \mathbb{N}^n, y \in \mathbb{N}$, to emphasize the separation of the coordinates. As the role of these colourings is ubiquitous in our proof of Theorem~\ref{MainTheorem}, we shall not always explicitly call these colourings ``partitioned''. 

\paragraph{}
Given a finite colouring $f : \mathbb{N}^n \rightarrow [k]$, we say that a fat grid $G = s + [0,T] + d[0,L]^n$ (as above) is \emph{consistent with respect to} $f$ (or merely \emph{consistent}, if $f$ is clear from context) if the colouring $f$ is identical along each translate of $B(G) = s+ [0,T]^n$ that comprises $G$. That is,
\[ f(x) = f(x + vd) 
\] for all $x \in B(G)$ and $v \in [0,L]^n$.
\paragraph{}
Note that it is easy to see that every finite colouring $f : \mathbb{N}^n \rightarrow [k]$ admits arbitrarily long and arbitrarily thick, fat grids of dimension $n$ that are consistent with respect to $f$. Simply apply Gallai's theorem to the product colouring 
\[ F_T(x) = \left( f(x + v) : v \in [0,T]^n \right), 
\] to obtain a monochromatic grid of length $T$, for all $T \in \mathbb{N}$. 
\paragraph{}
For $N \in \mathbb{N}$, and a partitioned colouring $f : \mathbb{N}^n \times \mathbb{N} \rightarrow [k]$, we say that an $n$-dimensional, fat grid of length and thickness $N$ is \emph{consistent} with respect to $f$ if it is consistent with respect to each colouring $f( \ \cdot \ ; y)$, for $y\in [0,N]$. Similarly to our observation above, one can see that a finite colouring $f$ admits arbitrarily long fat grids that are consistent with respect to $f$.
\paragraph{}
We now introduce an object that will help us keep track of certain large-scale information about a colouring in the course of our arguments. Suppose that $f : \mathbb{N}^n \times \mathbb{N} \rightarrow [k]$ is a partitioned $k$-colouring. We say that $\partial f : \mathbb{N}_0^n \times \mathbb{N} \rightarrow [k] $ is a \emph{derived colouring} of $f$ if for every $N \in \mathbb{N}$, there exists a $n$-dimensional fat grid, of length and thickness $N$ that is consistent with respect to $f$ and is such that, for each $y \in [0,N]$, the colouring $f( \ \cdot \ ; y)$ along $B(G)$ is identical to the colouring $\partial f( \ \cdot \ ; y) $ on $[0,N]^n$. In other words, if we write $G(N) = s + [0,N]^n + d[0,N]^n$ ($s \in \mathbb{N}^n_0$, $d \in \mathbb{N}$), we have
\[ f\left(s + x + vd  ; y \right) = \partial f\left( x ;y \right) , 
\] for all $y \in [0,N]$ and $v,x \in [0,N]^n$. Additionally, we define $\partial^0 f = f$ and, for $l\in \mathbb{N}$, we use the notation $\partial^lf$ to represent a derived colouring of $\partial^{l-1}f$.  
\paragraph{}
Another important notion for us is that of a \emph{synchronized array} of colourings. Suppose that $n,r,l \in \mathbb{N}$, that 
$f_1,\ldots,f_r : \mathbb{N}^n  \times \mathbb{N} \rightarrow [k]$ are partitioned $k$-colourings, and that $\partial^if_j : \mathbb{N}^n \times \mathbb{N}\rightarrow [k]$, $i\in [l]$, $j \in [r]$, are derived colourings. Further assume that $d(M) \in \mathbb{N}$, and $s(M) \in \mathbb{N}_0^n$, for each $M \in \mathbb{N}$. We refer to the array of colourings 

\[ f_1, \partial f_1 , \ldots, \partial^lf_1 \]
\[ \vdots \]
\[ f_r, \partial f_r, \ldots , \partial^l f_r 
\] as a \emph{synchronized array} with \emph{profile} $(d(M), s(M))$ if, for every $M \in \mathbb{N}$, there exists a fat grid $G(M)$ such that the following conditions hold.
\begin{enumerate}
\item \label{Def:synArrayconsistent} $G(M)$ is consistent with respect to each of the partitioned colourings $\partial^i f_j $ , $i \in [l], j\in [r]$;
\item \label{Def:synArrayDimension} $G(M)$ has dimension $n$;
\item \label{Def:synArrayLenDiff} $G(M)$ has length and thickness $M$;
\item \label{Def:commonDiff} $G(M)$ has common difference $d(M)$ and corner $s(M)$;
\item \label{Def:SynArrayAlingment} for each $i \in [l-1]$ and $j \in [r]$, the colouring $\partial^if_j( \ \cdot \ ; y)$ along the $M$-box $B(G(M))$ agrees with the colouring $\partial^{i+1}f_j( \ \cdot \ , y)$ along $[0,M]^n$. That is,
\[ \partial^if_j\left(s(M) + x + d(M)v ; y \right) = \partial^{i+1}f_j\left(x ; y \right)
\] for all $y \in [0,M]$, and $v, x \in [0,M]^n $.
\end{enumerate}
We call a grid $G(M)$ that satisfies Conditions (1)-(5) in the above, $M$-\emph{synchronizing for the array} 
$\left( \partial^i f_{i} \right)_{i,j}$. We record some key facts about synchronized arrays. The following lemma allows us to incrementally construct synchronized arrays. This will be crucial in our arguments as we build large collections of subsequences of $\mathbb{N}$ that interact in a favourable way. 
\paragraph{} For the proof of the following Lemma, it is convenient to introduce another fragment of notation. For a partitioned colouring $f : \mathbb{N}^n \times \mathbb{N} \rightarrow [k]$ and $s \in \mathbb{N}_0^n$, let $T_sf : \mathbb{N}^n \times \mathbb{N}\rightarrow [k]$ denote the partitioned colouring defined by $g(x;y) = f(x + s;y)$.
\begin{lemma} \label{synchronizedArrayFact}
\begin{enumerate} Let $n,m,l,r,k \in \mathbb{N}$. We have the following properties of synchronized arrays.
\item If $f : \mathbb{N}^n \times \mathbb{N} \rightarrow [k]$ is a partitioned $k$-colouring then there exist derived, partitioned colourings $\partial f,  \ldots, \partial^l f : \mathbb{N}^n \times \mathbb{N} \rightarrow [k]$, so that $(f, \partial f , \partial^2 f,\ldots,\partial^l f ) $ is a synchronized array.
\item Let $f_1,\ldots,f_r$ be partitioned $k$-colourings of $\mathbb{N}^n \times \mathbb{N}$ and let $\partial f_i, \ldots, \partial^l f_i$, $i \in [r]$, represent derived colourings, such that 
\[ \left( f_i, \partial f_i, \ldots, \partial^l f_i \right)_{i \in [r]} 
\] is a synchronized array. Then, given $m$ new colourings $f_{r+1}, \ldots, f_{r+m} :\mathbb{N}^n \times \mathbb{N} \rightarrow [k]$, we may find derived colourings $\partial f_{r+i}, \ldots, \partial^l f_{r+i}$, for each $i \in [m]$, such that the array 
\[ \left( f_{i}, \partial f_{i}, \ldots, \partial^l f_{i} \right)_{i \in [r+m]}
\] is synchronized. 

\item \label{con:StillsynAftersub} If $\left( f_i, \partial f_i , \ldots, \partial^lf_i \right)_{i \in [r]}$ is a synchronized array with profile $(d(M),s(M))_{M \in\mathbb{N}}$, and if $\left( M(n) \right)_{n\in \mathbb{N}}$ is an increasing sequence of integers, then $\left( f_i, \partial f_i , \ldots, \partial^lf_i \right)_{i \in [r]}$ also has profile \\ $(d(M(n)),s(M(n)))_{n \in \mathbb{N}}$.

\end{enumerate}
\end{lemma}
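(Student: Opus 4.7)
The plan is to handle the three items in the order (3), (1), (2), as (3) is essentially immediate and is convenient to invoke in the other two parts.

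For (3), suppose $G(M) = s(M) + [0,M]^n + d(M)[0,M]^n$ is an $M$-synchronizing grid for the array. Since $d(M) > M \geq n$ for $n \leq M$, the sub-fat-grid $s(M) + [0,n]^n + d(M)[0,n]^n$ lies in $G(M)$, has length and thickness $n$, and retains the same corner $s(M)$ and common difference $d(M)$. All five conditions in the definition of an $n$-synchronizing grid are inherited by restriction to the smaller box, so for any increasing sequence $(M(n))_n$ the collection $(d(M(n)), s(M(n)))_n$ constitutes a valid profile.

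For (1), I would induct on $l$. The base case $l = 0$ reduces to finding, for each $M$, a consistent fat grid for $f$ of length and thickness $M$, which is obtained by applying Gallai's theorem (Theorem~\ref{thm:Gallai}) to the product colouring $F_M(x) = (f(x+v;y) : v \in [0,M]^n, y \in [0,M])$, as in the discussion preceding the definition of a derived colouring. For the inductive step, suppose $(f, \partial f, \ldots, \partial^{l-1} f)$ is synchronized with profile $(d(M), s(M))$ and synchronizing grids $G(M)$. I would set $\psi_M(x;y) = \partial^{l-1} f(s(M) + x; y)$ on $[0,M]^n \times [0,M]$ (extended arbitrarily elsewhere) and invoke Fact~\ref{Fact:Compactness} to obtain a subsequence $(M_n)$ along which $\psi_{M_n}$ converges pointwise to a colouring $\partial^l f : \mathbb{N}_0^n \times \mathbb{N} \to [k]$. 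To build an $n$-synchronizing grid for the augmented array, I would apply Gallai's theorem to the product colouring of all $l+1$ layers $(\partial^i f)_{i=0}^l$ to obtain a grid simultaneously consistent with each layer, and then pass, via Fact~\ref{Fact:Compactness} and pigeonhole over the finitely many box patterns available at scale $n$, to a further subsequence of such grids whose box pattern of $\partial^i f$ agrees with $\partial^{i+1} f$ on $[0,n]^n$ for each $i \in [0,l-1]$.

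For (2), I would add the new colourings one at a time, reducing to the case of a single new colouring $f_{r+1}$. Given the existing synchronizing grids $G(M)$, I would apply Gallai's theorem inside each $B(G(M))$ to the product of $f_{r+1}$ with the previously-considered colourings, producing sub-fat-grids simultaneously consistent with the old system and $f_{r+1}$; Fact~\ref{Fact:Compactness} then provides the derived colourings $\partial f_{r+1}, \ldots, \partial^l f_{r+1}$ as pointwise limits of local box patterns, in the style of (1). Diagonalizing over all $m$ new colourings via the second clause of Fact~\ref{Fact:Compactness} delivers the augmented synchronized array. The hard part throughout is the inductive step of (1): enforcing consistency of the synchronizing grid with the freshly-constructed $\partial^l f$ while preserving the box-alignment at every lower level. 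This is handled by combining Gallai's theorem (for consistency with a product colouring) with a pigeonhole-compactness argument exploiting the finiteness of the space of box patterns at each scale, together with the fact that each $\partial^{i+1} f$ was constructed as a pointwise limit of precisely such box patterns, so the required alignment is realized by infinitely many grids at each scale.
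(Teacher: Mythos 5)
Your treatment of Conclusion~3 is right (the paper also dismisses it as immediate), though you overload the symbol $n$ to mean both the ambient dimension and the sequence index in the same sentence, which should be cleaned up. The skeleton of Conclusion~1 --- base case via Gallai on the product colouring, then compactness applied to the shifted colourings $T_{s(M)}\partial^{l-1}f$ to produce $\partial^l f$ as a limit of box patterns --- matches the paper precisely.

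Where the proposal has a genuine gap is in the construction of the new synchronizing grids in the inductive step of Conclusion~1. You propose to ``apply Gallai's theorem to the product colouring of all $l+1$ layers'' and then recover the alignment condition by ``pigeonhole over the finitely many box patterns.'' This does not work as stated: applying Gallai from scratch produces fat grids bearing no a priori relation to the original grids $G(M)$ along which the derived colourings $\partial^{i+1}f$ were defined. Pigeonhole/compactness can arrange for a subsequence of your fresh grids to have a \emph{common} box pattern at each level, but there is nothing forcing that pattern to be the \emph{specific} colouring $\partial^{i+1}f$ already constructed. Your closing justification --- that ``$\partial^{i+1}f$ was constructed as a pointwise limit of precisely such box patterns'' --- is circular: it was built from the original grids $G(M)$, not from fresh Gallai grids. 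The paper avoids this by refining \emph{inside} the existing synchronizing grids $G'(N)=G(S(N))$: Gallai is applied to the product colouring restricted to the $N$-grid, producing a sub-fat-grid of $G'(N)$. Because the sub-grid lives inside $G'(N)$ and $G'(N)$ is already consistent at all lower levels, the alignment (Condition~5) for $i\le l-1$ is inherited for free (translating the corner by a multiple of $d(S(N))$ does not change the box pattern), and the alignment at the new top level holds because $\partial^l f$ was defined precisely as the limit of the box patterns of $\partial^{l-1}f$ at the corners $s(S(N))$. You need to make the ``stay inside the old grid'' step explicit; without it the argument does not close. The same remark applies in your sketch of Conclusion~2, where ``apply Gallai inside each $B(G(M))$'' should read ``inside each fat grid $G(M)$'' (the box $B(G(M))$ alone is too small to extract a length-$M$ sub-grid).
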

\begin{proof}
Note that the proof of Conclusion~\ref{con:StillsynAftersub} follows immediately from the definition of a synchronized array. To prove the first two parts of the theorem we iteratively apply Gallai's theorem (Theorem \ref{thm:Gallai}) to an appropriate product colourings and then ``take limits''. Recall that the finitary form of Gallai's theorem tells us that if $l,d,c \in \mathbb{N}$, there exists a smallest integer $N = N(l,d,c)$ so that every $c$-colouring of a $n$-dimensional grid of length $N$ and dimension $d$ admits a monochromatic sub-grid of length $l$ and dimension $d$.  
\paragraph{} For the basis step, let $f: \mathbb{N}^n\times \mathbb{N} \rightarrow [k]$, be a partitioned colouring. We need to construct, for each $M \in \mathbb{N}$, a sequence of grids of length and thickness $M$, that are consistent with respect to each of the colourings $f(\ \cdot\ ; 0) , f(\ \cdot\ ; 1), \ldots , f(\ \cdot \; M)$. To this end, let $M \in \mathbb{N}$ and define the ``product colouring'' $F_M: \mathbb{N}^n \rightarrow [k]^{(M+1)^{n+1}}$ by \[ F_M(z) = \left( f(z+x;y) : y \in [0,M],x \in [0,M] \right) 
\] Now apply Gallai's theorem to this colouring to obtain $s(M) \in \mathbb{N}_0^n$ and $d(M) \in \mathbb{N}$ so that the fat grid $G(M) = s(M) + [0,M] + d(M)[0,M]^n$ is consistent with respect to $F_M$. But we are done, as this is equivalent to $G(M)$ being consistent with respect to each $f(\ \cdot\ ; 0) , f(\ \cdot\ ; 1), \ldots , f(\ \cdot\ ; M)$. Hence, for $l = 0 $, we have constructed a synchronized array. 
\paragraph{}
Now, moving to the induction step, assume that for $l \geq 0$ we have a synchronized array $f,\partial f,\ldots, \partial^lf$. This means that for each $M \in \mathbb{N}$, there exists some fat grid $G(M) = s(M) + [0,M] + d(M)[0,M]^n$ ($s(M) \in \mathbb{N}^n_0$ and $d(M) \in \mathbb{N}$) with length and thickness $M$ that is $M$-synchronizing for this array. We consider the set of ``shifted'' partitioned colourings $\left\lbrace T_{s(M)}\partial^l f \right\rbrace_{M \in \mathbb{N}}$. By the compactness principle, there exists an infinite set $S \subseteq \mathbb{N}$ for which the limit
\[ \lim_{M\in S} T_{s(M)}\partial^l f, 
\] exists. Note that this limit point is a derived colouring of $\partial^l f$ and therefore we may denote it by $\partial^{l+1}f$. 
\paragraph{} Now let us redefine $G'(M) = G(S(M))$, where $S = \{ S(1) < S(2) < \cdots \}$, and note that $G'(M)$ can be thought of as a fat grid with length and thickness $M$, by possibly disregarding some terms, as we must have $S(M) \geq M$. Therefore the grids $G'(M)$ are synchronizing with respect to 
$f,\partial f ,\ldots ,\partial^l f$, by Conclusion~\ref{con:StillsynAftersub} of the Lemma. Also, the grids $G'(M)$ witness conditions (\ref{Def:synArrayconsistent}), (\ref{Def:synArrayDimension}), (\ref{Def:synArrayLenDiff}) in the definition of a synchronized array, with respect to the array $f,\partial f ,\ldots, \partial^{l+1} f $. All that remains is to refine these grids $G'(M)$ so that they are consistent with respect to $\partial^{l+1}f$. This is achieved in a manner similar to the basis step. For each $M \in \mathbb{N}$, set $N  = N\left(M,n,k^{(M+1)^{n+1}}\right)$ and consider the grid $G'(N)$. We use Gallai's theorem to pass to a subgrid of $G'(N)$ that is consistent with respect to $\partial^{l+1}f$. We write $G'(N) = d[0,N]^n + [0,N]^n + s$ and define the product colouring of the $n$-dimensional grid $F : [0,N]^n  \rightarrow [k]^{(M+1)^{n+1}}$ by setting
\[ F(z) = \left( \partial^lf(dz + x + s ;y): x \in [0,M]^n,y \in [0,M]\right), 
\] for each $z \in [0,N]^n$. Applying Gallai's theorem to the colouring $F$, we obtain a sub-grid $G'' = G''(M) \subseteq G'$ of length $M$, for which $F(z)$ is monochromatic. Hence, $G''(M)$ is consistent with respect to all colourings $f,\partial f ,\ldots, \partial^{l+1} f$. Applying this argument for all $M \in \mathbb{N}$ proves the first Conclusion the Lemma. 

\paragraph{}
The proof of the second Conclusion of the lemma is of a very similar nature to the proof of the first and so we don't include it here. 
\end{proof}

The following is a technical fact that makes explicit the observation that nothing essential changes when we append a new ``dead'' variable to the functions $f_i(x_1,\ldots,x_n;y)$ in a synchronized array.
\begin{fact} \label{adjoinADeadVariable}
For $r \in \mathbb{N}$, let $\left( f_i, \partial f_i , \ldots, \partial^lf_i \right)_{i \in [r]}$ be a synchronized array with profile $(d(M),s(M))_{M \in\mathbb{N}}$. Define new functions $\tilde{f_i},\widetilde{\partial^{j}f}$, for $i \in [r], j \in [l]$, as follows,
\[ \tilde{f_i}(x_1,\ldots,x_n, x_{n+1};y) = f_i(x_1,\ldots,x_n;y); \]
\[ \widetilde{\partial^{j}f_i}(x_1,\ldots,x_n,x_{n+1};y) = \partial^jf_i(x_1,\ldots,x_n;y), 
\]for each $(x_1,\ldots,x_{n+1};y) \in \mathbb{N}^{n+1}_0$. Then the following assertions hold
\begin{enumerate}
\item $\widetilde{\partial f_i}$ is a derived colouring of $\widetilde{f_i}$ and hence we may write 
$\partial \tilde{ f}_j = \widetilde{\partial f_j}.$ 
\item  $\left( \tilde{f_i}, \widetilde{\partial f_i} , \ldots, \widetilde{\partial^l f_i} \right)_{i \in [r]}$ is a synchronized array with the same profile, $(d(M),s(M))_{M \in\mathbb{N}}$, as the original array. 
\end{enumerate}
\end{fact}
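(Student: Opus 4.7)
\textbf{Proof plan for Fact~\ref{adjoinADeadVariable}.} The plan is to show that both conclusions follow immediately from the observation that, by construction, each tilde colouring $\widetilde{\partial^{j} f_i}$ is invariant in its last positional coordinate $x_{n+1}$. Hence any consistency condition already witnessed by a fat grid in $\mathbb{N}_0^n$ lifts, essentially for free, to a fat grid in $\mathbb{N}_0^{n+1}$.

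Concretely, for each $M\in\mathbb{N}$ let $G(M)=s(M)+[0,M]^n+d(M)[0,M]^n\subseteq\mathbb{N}_0^n$ be an $M$-synchronizing grid for the original array. I would construct the lifted grid $G'(M)=s'(M)+[0,M]^{n+1}+d(M)[0,M]^{n+1}\subseteq\mathbb{N}_0^{n+1}$, where $s'(M)=(s(M),0)\in\mathbb{N}_0^{n+1}$. This $G'(M)$ has dimension $n+1$, length and thickness $M$, common difference $d(M)$, and corner $s'(M)$, which matches the claimed profile (with the corner reinterpreted in $\mathbb{N}_0^{n+1}$ by padding with a zero in the new coordinate).

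The consistency of $G'(M)$ with respect to each $\widetilde{\partial^{j} f_i}$ is immediate: for $x=(x',x_{n+1})\in B(G'(M))$, $v=(v',v_{n+1})\in[0,M]^{n+1}$, and $y\in[0,M]$, we have
\[
\widetilde{\partial^{j} f_i}(x+d(M)v;y)=\partial^{j} f_i(x'+d(M)v';y)=\partial^{j} f_i(x';y)=\widetilde{\partial^{j} f_i}(x;y),
\]
using the consistency of $G(M)$ with respect to $\partial^{j} f_i$ in the middle equality. The alignment condition (\ref{Def:SynArrayAlingment}) between $\widetilde{\partial^{j} f_i}$ and $\widetilde{\partial^{j+1} f_i}$ is verified by the same projection trick: for $x,v\in[0,M]^{n+1}$,
\[
\widetilde{\partial^{j} f_i}(s'(M)+x+d(M)v;y)=\partial^{j} f_i(s(M)+x'+d(M)v';y)=\partial^{j+1} f_i(x';y)=\widetilde{\partial^{j+1} f_i}(x;y),
\]
where the middle equality uses that $G(M)$ is $M$-synchronizing for the original array. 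This establishes Conclusion (2); Conclusion (1) is then just the special case obtained by restricting attention to the two rows $\widetilde{f_i}$ and $\widetilde{\partial f_i}$ for a fixed $i$, which shows that $\widetilde{\partial f_i}$ satisfies the definition of a derived colouring of $\widetilde{f_i}$.

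There is no real obstacle here; the only mild subtlety is the interpretation of the phrase ``same profile'' across the dimension change, which I resolve by padding $s(M)$ with a trailing zero. Any other choice of last coordinate for the corner would work equally well since the tilde colourings ignore that coordinate.
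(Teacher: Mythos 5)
Your proof is correct. The paper in fact omits the proof of this Fact entirely (the statement is followed immediately by a \texttt{\textbackslash qed}), treating it as a routine verification, and your argument is precisely the natural one the author has in mind: lift the $M$-synchronizing grid $G(M) \subseteq \mathbb{N}_0^n$ to $G'(M) \subseteq \mathbb{N}_0^{n+1}$ by padding the corner with a zero, and observe that both consistency and the alignment condition pass through the projection onto the first $n$ coordinates because the tilde colourings are constant in $x_{n+1}$. You also correctly flag the one cosmetic point that needs care — the phrase ``same profile'' requires reinterpreting $s(M)$ as the padded vector $(s(M),0)$ once the dimension changes, while $d(M)$ carries over literally — and your resolution is the right one. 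Conclusion (1) does indeed drop out of the $j=0$ instances of the two displayed identities, so the derivation of (1) from (2) is sound rather than circular. Nothing is missing.
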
\qed

\section{Proof of Theorem \ref{MainTheorem}} \label{sec:ProofOfMainThm}
The proof of Theorem~\ref{MainTheorem} is broken into three main parts: Lemma~\ref{BasicRamseyLemma}, Lemma~\ref{structureBuildingLemma} and the ``Proof of Theorem~\ref{MainTheorem}''. The core of the argument lies in Lemma~\ref{structureBuildingLemma}, where we build up a large collection of integers $X$, with the property that ``many'' pairs $a,b \in X$ are such that the pair $a,a^b$ is monochromatic.\footnote{We actually need more complicated exponential compositions, but this is the sort of thing we are looking for.} This set of ``good'' pairs will have a sufficiently rich structure that we can apply a Ramsey-type theorem (Lemma~\ref{BasicRamseyLemma}) to find a monochromatic finite product set among these ``good'' pairs. We then observe that the rich structure of these pairs will allow us to extend our finite product set to a finite exponential-product set. In the final, rather technical stages of the proof, we simply need to check that all of our choices work. So, while the ``Proof of Theorem~\ref{MainTheorem}'' is rather drawn out, there is nothing new happening here. We are only combining the ingredients, taking apart our system and checking that our previous choices yield the desired results. 
\paragraph{}
Given $m,n \in \mathbb{N}$ and $A_1,\ldots,A_m \subseteq [n]$, set   
\[ FU(A_1,\ldots,A_m) = \left\lbrace \bigcup_{i \in I} A_i : \emptyset \not= I \subseteq [n] \right\rbrace. \] It is a well known fact (for example, as a consequence of the Hales-Jewett theorem \cite{HalesJewett}) that, for sufficiently large $N$, every finite colouring of $\mathcal{P}([N])$ admits a monochromatic set of the form $FU(A_1,\ldots,A_m)$. We require a slight strengthening of this well-known result. As usual, we define the sum $x_i + x_{i+1} + \cdots + x_r$ to be $0$ if we have $i > r$.
\begin{lemma} \label{BasicRamseyLemma} Let $m,k \in \mathbb{N}$, then there exists a positive integer $N = N(k,m)$ so that every $k$-colouring of $\mathcal{P}\left( [N] \right)$ admits $A_1,\ldots,A_m \subseteq [N]$ such that $FU(A_1,\ldots,A_m)$ is monochromatic and $\max{A_i} < \min{A_{i+1}}$, for each $i \in [m-1]$. 
\end{lemma}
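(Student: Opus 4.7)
I plan to derive the lemma from Theorem~\ref{thm:finiteSumsThm} (Folkman-Sanders) by the standard binary encoding of subsets as integers. For each nonempty $A \subseteq [N]$, set $\sigma(A) = \sum_{i \in A} 2^{i-1}$, so that $\sigma$ is a bijection $\mathcal{P}([N]) \setminus \{\emptyset\} \to [1, 2^N - 1]$ carrying disjoint unions to sums: $\sigma(A \sqcup B) = \sigma(A) + \sigma(B)$ whenever $A \cap B = \emptyset$. A $k$-colouring $\chi$ of $\mathcal{P}([N])$ thus induces a $k$-colouring $\chi' = \chi \circ \sigma^{-1}$ on $[1, 2^N - 1]$. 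The lemma then reduces to producing integers $z_1 < \cdots < z_m$ in $[1, 2^N - 1]$ whose binary supports are pairwise disjoint and blockwise-ordered (so the top bit of $z_i$ lies strictly below the bottom bit of $z_{i+1}$) and with $FS(z_1, \ldots, z_m)$ monochromatic under $\chi'$; the sets $A_i = \sigma^{-1}(z_i)$ then satisfy $\max A_i < \min A_{i+1}$, and disjointness of supports gives $\chi(\bigcup_{i \in I} A_i) = \chi'(\sum_{i \in I} z_i)$, so $FU(A_1, \ldots, A_m)$ is monochromatic.

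I would construct the $z_i$'s by an inductive refinement of Theorem~\ref{thm:finiteSumsThm}. First apply Theorem~\ref{thm:finiteSumsThm} with a very large parameter $M$ (to be chosen as a tower-type function of $k$ and $m$) to obtain $y_1 < \cdots < y_M$ with $FS(y_1, \ldots, y_M)$ monochromatic in some colour $c$; set $z_1 = y_1$, and let $\ell_1$ be its top bit. At the $j$-th inductive stage ($2 \leq j \leq m$), restrict attention to integers whose binary supports lie strictly above $\ell_{j-1}$: these form an arithmetic progression of multiples of $2^{\ell_{j-1}+1}$, which we regard via change of variable as an initial interval of $\mathbb{N}$. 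On this restricted range, define a $k^{2^{j-1}}$-colouring by sending each candidate integer $n$ to the tuple $(\chi'(z_S + n))_{S \subseteq [j-1]}$, where $z_S = \sum_{i \in S} z_i$ (with $z_\emptyset = 0$). Apply Theorem~\ref{thm:finiteSumsThm} again with a sufficiently large parameter on this refined colouring to extract a long monochromatic Folkman configuration; taking its first element as the next $z_j$ ensures that the binary support of $z_j$ is disjoint from and lies above those of $z_1, \ldots, z_{j-1}$, and that $\chi'(z_S + z_j)$ is the same fixed value for every $S \subseteq [j-1]$.

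The main obstacle is ensuring that the colour $c$ is preserved across all $m$ inductive stages: at each stage the refinement records $2^{j-1}$ colour values, and we need all of them to equal $c$ simultaneously, rather than merely being constant in $S$. This is handled by taking $M$ large enough to absorb an iterated tower of $k^{2^{j-1}}$ colour refinements across all $m$ stages, together with a pigeonhole step at each stage which exploits the fact that the candidate integers form their own Folkman configuration of arbitrary length, so that the colour tuple appearing there must coincide with $c$ in every coordinate (using that the $S = \emptyset$ coordinate forces one of them to be $c$, and then the others are determined by further applications of Theorem~\ref{thm:finiteSumsThm} to the induced colourings that record discrepancies). Once $z_1, \ldots, z_m$ are constructed, decoding via $\sigma$ yields the required blockwise-ordered $A_1, \ldots, A_m \subseteq [N]$ with $FU(A_1, \ldots, A_m)$ monochromatic in $c$ under $\chi$.
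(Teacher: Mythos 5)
Your binary-encoding reduction is a nice idea, and it is sound as far as it goes: under $\sigma(A) = \sum_{i\in A} 2^{i-1}$, blockwise-ordered sets with disjoint supports correspond exactly to integers with blockwise-ordered binary supports, and disjoint unions become sums, so it would indeed suffice to find $z_1 < \cdots < z_m$ with blockwise-ordered binary supports such that $FS(z_1,\ldots,z_m)$ is monochromatic under $\chi'$. However, the inductive construction you propose to produce such $z_i$ has a genuine gap, and it is exactly the one you flag as the ``main obstacle''. At stage $j$ you apply Theorem~\ref{thm:finiteSumsThm} to a \emph{fresh} set of integers (the multiples of $2^{\ell_{j-1}+1}$) under the product colouring $n \mapsto (\chi'(z_S+n))_{S\subseteq[j-1]}$. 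Folkman--Sanders then yields a Folkman configuration on which this tuple-valued colouring is \emph{constant}, say equal to $(c_S^{(j)})_{S}$. But constancy in $n$ is all it gives you: there is no reason for the individual coordinates $c_S^{(j)}$ to be equal to one another, nor for them to equal the colour $c$ fixed at stage $1$. In particular your claim that ``the $S=\emptyset$ coordinate forces one of them to be $c$'' is unjustified: the $S=\emptyset$ coordinate is $\chi'(n)$ for $n$ ranging over a brand-new Folkman configuration having no relation to the stage-$1$ configuration, so its colour is unconstrained. The subsequent appeal to ``further applications of Theorem~\ref{thm:finiteSumsThm} to the induced colourings that record discrepancies'' does not resolve this, because Folkman--Sanders never lets you prescribe the target colour of the monochromatic set it produces. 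Without a mechanism forcing the colour tuples to agree across stages (and to be constant in $S$ within a stage), the final $FS(z_1,\ldots,z_m)$ is not monochromatic.

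It is worth noting how the paper sidesteps both difficulties at once. Rather than trying to control the binary structure of Folkman--Sanders witnesses, it first applies iterated Ramsey's theorem (for $r$-uniform hypergraphs, $r = 1,\ldots,S$) to pass to a set $X_S$ on which the colour of a subset depends only on its cardinality. Since cardinality is additive across disjoint unions, and the blockwise-ordered condition is then satisfied for free by taking $A_1,\ldots,A_m$ to be consecutive blocks of $X_S$, a single application of Theorem~\ref{thm:finiteSumsThm} to the induced colouring of $[S]$ by cardinality finishes the job. This avoids your colour-consistency problem entirely, because the Ramsey preprocessing collapses $F$ to a function of one integer parameter before Folkman--Sanders is ever invoked. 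If you want to salvage your approach, you would likewise need some global preprocessing step (e.g.\ making $\chi'(n)$ depend only on the popcount of $n$ within a suitable subset of bit positions) before extracting the $z_i$, at which point the argument essentially reproduces the paper's proof in different clothing.
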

\begin{proof}
For $r,t,k \in \mathbb{N}$, let $R_r(t) = R_r(k;t)$ denote the minimum integer $n$, so that every $k$-colouring of $[n]^{(r)}$ admits a set $X \subseteq [n] $, of cardinality $t$ with $X^{(r)}$ monochromatic. Let $S = S(m,k)$ denote the minimum integer $n$ for which every $k$-colouring of $[n]$ admits a monochromatic set of the form $FS(a_1,\ldots,a_m)$. We claim that the choice of $N  = R_1(R_2(R_3( \cdots R_S(mS))\cdots) $ suffices for the lemma. 
\paragraph{}
To see this, let $F : \mathcal{P}([N]) \rightarrow [k]$ be a $k$-colouring and define \[ N_i = R_i(R_{i+1}( \cdots R_S(mS)) \cdots ), \] for $i = 1, \ldots, S$. By simply applying Ramsey's theorem at each step, it is clear that we can inductively construct sets $[N] \supset X_1 \supset X_2 \supset  \cdots \supset X_S$ with $|X_i| = N_{i+1}$, for $i \in [0,S-1]$ and $|X_S| = mS$, so that $F$ is constant on each of $X_{i}^{(1)}, \ldots, X_{i}^{(i)}$, for each $i \in [S]$.  
\paragraph{}
Now define a $k$-colouring $g$ of $[S]$ by $g(x) = F(A) \text{ for some } A \in X_S^{(x)}$. Since $F$ is constant on $X_S^{(x)}$, for $x \in [S]$, the value of $g(x)$ does not depend on the choice of $A$, and hence is well defined. Now apply the Finite Sums Theorem (Theorem~\ref{thm:finiteSumsThm}) to find $FS(a_1,\ldots,a_m) \subseteq [S]$, with $FS(a_1,\ldots, a_m)$ monochromatic with respect to the colouring $g$. To finish the proof, write $X_S = \{ x(1) < \cdots < x(lS) \}$ and choose $A_1$ to be the first $a_1$ elements of $X_S$, choose $A_2$ to be the next $a_2$ elements and so on. More formally, for $i \in [m]$ set 
\[ A_i = \{ x(a_{1} + \cdots + a_{i-1} +1 ) , x(a_{1} + \cdots + a_{i-1} + 2 ), \ldots ,x(a_{1} + \cdots + a_i ) \}. 
\] By construction, we have $\max A_i < \min A_{i+1}$. To see that $FU(A_1,\ldots,A_m)$ is monochromatic, simply note that 
$|A_{i_1} \cup A_{i_2} \cup \cdots \cup A_{i_k}| = a_{i_1} + \cdots + a_{i_l}$ for each $l \in [m]$, $1\leq i_1,< \cdots <i_l $, and hence   
\[ F(A_{i_1} \cup A_{i_2} \cup \cdots \cup A_{i_l}) = g(a_{i_1} + \cdots + a_{i_l}) = g(a_1). 
\]This completes the proof.
\end{proof}

We are now in a position to take a large step towards the proof of Theorem \ref{MainTheorem}. In what follows, we take our given colouring and construct a tremendous number of subsequences which interact in a favourable way. For the moment, we are not interested in explicitly constructing our desired pattern; we are only setting the scene. Later, in the proof of Theorem \ref{MainTheorem}, we shall carefully extract our FEP-set from the collection of sequences that we build here. 
\paragraph{}
In the proof below, it will be convenient to index the coordinates of $\mathbb{N}^{r+1}$ by elements of $[0,r]$ instead of elements of $[1,r+1]$, as is usual.

\begin{lemma} \label{structureBuildingLemma}
If $f : \mathbb{N} \rightarrow [k]$ is a $k$-colouring, then for all $l,r \in \mathbb{N}_0$ with $l \geq r$, we may construct
\begin{enumerate}
\item partitioned colourings $f_S : \mathbb{N}^{[0,r]}_0 \times \mathbb{N} \rightarrow [k]$, for each $S \in \mathcal{P}([0,r]) \setminus \{ \emptyset \}$ ;
\item derived colourings $\partial^1 f_S, \ldots, \partial^l f_S : \mathbb{N}^{[0,r]}_0 \times \mathbb{N} \rightarrow [k]$ for each $S \in \mathcal{P}([0,r]) \setminus \{ \emptyset \}$
;
\item sequences of integers $(d_0(M))_{M \in \mathbb{N}}, \ldots, (d_{r+1}(M))_{M \in \mathbb{N}}$ , where we set $d_0(M) = 1$ for all $M \in \mathbb{N}$;
\item sequences of integer vectors $(s_1(M))_{M \in \mathbb{N}}, \ldots, (s_{r+1}(M))_{M \in \mathbb{N}}$, where $s_j(M) \in \mathbb{N}_0^{[0,r]}$, $j\in [r+1]$, $M \in \mathbb{N}$, and we write $s_i(M) = (s_{i,0}(M), \ldots, s_{i,r}(M) )$, for $i \in [r+1]$;
\item \label{item:LimIndex} infinite sets of positive integers $C_0,\ldots,C_r$;
\item for each $i \in [0,r]$, and non-empty $S \subseteq [0,i]$, we have partitioned colourings \[ f_{S,i,M_0,\ldots,M_i} : \mathbb{N}^{[0,r]} \times \mathbb{N} \rightarrow [k], \] defined by
\[ f_{S,i,M_0,\ldots,M_i}(x_0,\ldots,x_r ; y) = f\left( 2 \star  \left( \sum_{a \in S}d_a(M_a)2^{s_{a+1,a}(M_{a+1}) + s_{a+2,a}(M_{a+2}) + \cdots + s_{i,a}(M_r) + x_a } + y \right) \right);\] 
\end{enumerate}
such that the following conditions hold. 
\begin{enumerate}
\item \label{synchronizedArrays} For each $i \in [0,r] $, the array 
\[ \left( f_S, \partial f_S, \ldots,  \partial^l f_S \right)_{\emptyset \not= S \subseteq [i]} 
\] is synchronized with profile $\left(d_{i+1}(M), s_{i+1}(M)\right)_{M \in \mathbb{N}}$;
\item \label{Convergent} for each $i \in [0,r]$, $S \subseteq [0,i]$, $t \in [0,i]$, and $M_{t+1},\ldots,M_{i} \in \mathbb{N}$, the limit 
\[ \lim_{M_t \in C_t} \lim_{M_{t-1} \in C_{t-1}} \cdots \lim_{M_0 \in C_0} \ f_{S,i,M_0,\ldots,M_i}
\] exists and 
\[ \lim_{M_i \in C_i} \lim_{M_{i-1} \in C_{i-1}} \cdots \lim_{M_0 \in C_0} \ f_{S,i,M_0,\ldots,M_i} = \partial^{i - \max{S}}f_S .\]
\end{enumerate}
\end{lemma}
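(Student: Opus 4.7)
I will proceed by induction on $i \in [0, r]$, constructing at step $i$ all objects associated to subsets $S \subseteq [0, i]$ (the colourings $f_S$ and $\partial^j f_S$, the infinite set $C_i$, and the sequences $(d_{i+1}(M))_M$ and $(s_{i+1}(M))_M$) and verifying Conditions 1 and 2 at index $i$. For the base case $i = 0$: put $f_{\{0\}}(x; y) := f(2^{2^{x_0} + y})$, regarded via Fact~\ref{adjoinADeadVariable} as a partitioned colouring on $\mathbb{N}_0^{[0, r]} \times \mathbb{N}$; apply the first conclusion of Lemma~\ref{synchronizedArrayFact} to produce derived colourings $\partial f_{\{0\}}, \ldots, \partial^l f_{\{0\}}$ with some profile $(d_1(M), s_1(M))$; take $C_0 := \mathbb{N}$. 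Since $d_0 \equiv 1$ and the formula gives $f_{\{0\}, 0, M_0}(x; y) = f(2^{2^{x_0} + y})$ independently of $M_0$, Condition 2 is trivial.

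For the inductive step, the key observation is an algebraic identity: for $S \subseteq [0, i+1]$ with $i+1 \in S$ and $T := S \setminus \{i+1\}$, direct manipulation of the defining formula yields
\[ f_{S, i+1, M_0, \ldots, M_{i+1}}(x; y)\ =\ f_{T, i, M_0, \ldots, M_i}\bigl(x + \tilde{s}(M_{i+1});\ y + d_{i+1}(M_{i+1})\,2^{x_{i+1}}\bigr), \]
where $\tilde{s}(M_{i+1})$ is the vector whose $a$-th entry is $s_{i+1, a}(M_{i+1})$ for $a \in T$ and is arbitrary otherwise (the values outside $T$ are immaterial, as $f_{T, i, M_0, \ldots, M_i}$ depends only on coordinates in $T$). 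The degenerate case $T = \emptyset$ is interpreted as $f\bigl(2^{y + d_{i+1}(M_{i+1}) 2^{x_{i+1}}}\bigr)$, which is already independent of $M_0, \ldots, M_i$. Using Condition 2 of the inductive hypothesis, the inner iterated limit of the right-hand side over $(M_0, \ldots, M_i) \in C_0 \times \cdots \times C_i$, for each fixed $M_{i+1}$, equals
\[ g_{T, M_{i+1}}(x; y)\ :=\ \partial^{i - \max T} f_T\bigl(x + \tilde{s}(M_{i+1});\ y + d_{i+1}(M_{i+1})\,2^{x_{i+1}}\bigr). \]
Apply the compactness principle (Fact~\ref{Fact:Compactness}) to extract a single infinite $C_{i+1} \subseteq \mathbb{N}$ along which $g_{T, M_{i+1}}$ converges pointwise for every $T \subseteq [0, i]$ simultaneously, and define $f_S := \lim_{M_{i+1} \in C_{i+1}} g_{S \setminus \{i+1\}, M_{i+1}}$ for each $S \ni i+1$. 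Finally, invoke the second conclusion of Lemma~\ref{synchronizedArrayFact} to furnish derived colourings $\partial^j f_S$ of the new $f_S$, together with a common profile $(d_{i+2}(M), s_{i+2}(M))$ that synchronizes the full array $(f_S, \partial f_S, \ldots, \partial^l f_S)_{S \subseteq [0, i+1]}$; this secures Condition 1 at index $i+1$.

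Verification of Condition 2 at index $i+1$ splits in two cases. For $S \ni i+1$, the equality $\partial^{(i+1) - \max S} f_S = f_S$ matches the construction by definition. For $S \subseteq [0, i]$, the decomposition loses its $y$-shift, reducing the task to
\[ \lim_{M_{i+1} \in C_{i+1}} \partial^{i - \max S} f_S\bigl(x + s_{i+1}(M_{i+1});\ y\bigr)\ =\ \partial^{(i+1) - \max S} f_S(x; y), \]
which follows from the synchronization property of the inductive array with profile $(d_{i+1}(M), s_{i+1}(M))$: setting $v = 0$ in the alignment condition of a synchronized array, the left-hand sequence is eventually constant with value equal to the right-hand side, once $M_{i+1}$ exceeds the coordinates of $x$ and the value $y$. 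Intermediate iterated limits ($t < i+1$) are handled analogously, invoking Condition 2 of the inductive hypothesis with $M_{t+1}, \ldots, M_{i+1}$ held fixed.

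The principal obstacle is the two-speed nature of the decomposition at the inductive step: the $x$-shift by $s_{i+1}(M_{i+1})$ can be absorbed into $\partial$ via the synchronized array, but the $y$-shift by $d_{i+1}(M_{i+1}) 2^{x_{i+1}}$ grows unboundedly and must be handled separately via compactness, defining genuinely new colourings $f_S$ for $S \ni i+1$. For the two-pronged approach to mesh, the sequence $(d_{i+1}, s_{i+1})$ produced by Lemma~\ref{synchronizedArrayFact} at step $i$ must be exactly the one appearing in the formula at step $i+1$; this is the mechanism that makes the induction cohere, and maintaining this alignment through the various applications of Lemma~\ref{synchronizedArrayFact} is the main bookkeeping challenge.
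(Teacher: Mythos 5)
Your proposal is correct and follows essentially the same inductive strategy as the paper: proceed by induction on the index ($i$ here, $r$ in the paper), handle subsets not containing the new index via the dead-variable trick (Fact~\ref{adjoinADeadVariable}), handle those that do via compactness to define the new $f_S$, extend the synchronized array with Lemma~\ref{synchronizedArrayFact}, and verify Condition~\ref{Convergent} using the identity $f_{S,i+1,M_0,\ldots,M_{i+1}}(x;y)=f_{S,i,M_0,\ldots,M_i}(x+s_{i+1}(M_{i+1});y)$ together with the alignment condition of a synchronized array. Your central algebraic identity for $i+1\in S$ is exactly the reparametrization implicit in display~(\ref{Equation:KeySequences}) of the paper. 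The only notable deviation is organizational: you extract only the single new set $C_{i+1}$ by compactness, whereas the paper invokes the iterated-compactness fact (Fact~\ref{Fact:Compactness}, Part~2) to obtain subsets $C'_0\subseteq C_0,\ldots,C'_r\subseteq\mathbb{N}$; your observation that the inner iterated limits already converge along the old $C_0,\ldots,C_i$ (because the decomposition identity, evaluated at a fixed point, reduces them to limits guaranteed by the inductive hypothesis) makes the inner refinement unnecessary, which is a small but genuine streamlining. All other details — the treatment of the degenerate case $T=\emptyset$, the interplay between $(d_{i+1},s_{i+1})$ produced at step $i$ and the formula at step $i+1$, the use of $l\geq r$ to keep $i-\max S\leq l-1$ so the alignment condition applies — are handled correctly.
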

\begin{proof}
We fix $l \in \mathbb{N}$ arbitrarily and apply induction on $r \leq l$. In the base case, $r = 0 $, we set $d_0(M_0) = 1$ for all $M_0 \in \mathbb{N}$ and consider the colouring 
\[ f_{\{0\},M_0}(x_0;y) =   f\left( 2^{d_0(M_0)2^{x_0} + y} \right) = f\left( 2^{2^{x_0} + y}  \right)  ,
\] which (trivially) tends to some limiting colouring as $M_0 \rightarrow \infty$, which we denote by $f_{ \{ 0 \} }$, in accordance with Condition~\ref{Convergent} of the induction hypothesis. Hence we have satisfied the Condition~\ref{Convergent} with the choice of $C_0 = \mathbb{N}$. To satisfy Condition~\ref{synchronizedArrays}, we apply Lemma~\ref{synchronizedArrayFact} to construct a sequence of derived colourings 
\[ f_{\{0\}}, \partial f_{\{0\}} , \ldots, \partial^l f_{\{0\}}
\] such that the above is a synchronized array with profile $(d_1(M), s_1(M))_{M \in \mathbb{N}}$. 
\paragraph{} Turning to the induction step, assume that the lemma holds for some $r-1 \leq l-1$: our aim is to show that it holds for $r$.
\paragraph{}
\emph{ Construction of the new colourings $\partial^i f_S$ } \\
We construct $\partial^if_S$ in two cases. In one case, if $r \not\in S$, we are essentially done: the induction hypothesis gives us what we need. In the other case, if $r \in S $, we find new sequences of integers such that the limiting colourings converge to something appropriate. To start with the easy case, $r \not\in S$, we use the induction hypothesis to construct $f_S, \partial f_{S}, \ldots, \partial^l f_{S}$ for all sets $\emptyset \not= S \subseteq [r-1]$. We define the new colourings $\partial^if_S$, for $\emptyset \not= S \subseteq [r-1]$, by simply joining a new ``dead'' variable $x_r$ to the $\partial^if_S$ that are granted by the induction hypothesis. That is, define 
\[ \widetilde{ \partial^i f}_S(x_0,\ldots,x_{r-1},x_{r};y) = \partial^i f_S (x_0, \ldots, x_{r-1};y), 
\] for all $x_0,\ldots,x_r,y \in \mathbb{N}$. From Fact \ref{adjoinADeadVariable}, we know that nothing is altered in adjoining this new variable, and so we take $\widetilde{ \partial^i f}_S$ as our $\partial^i f_S$ for all $\emptyset \not= S \subseteq [r-1]$, as in the statement of the lemma. As nothing essential is changed, we drop the tilde in the notation. 
\paragraph{}
This leaves us to construct $\partial^if_S$, where $r \in S$, and $i \in [0,l]$. To achieve this, we consider the following (totally new) collection of sequences, indexed by the non-empty subsets $S \subseteq [r]$, with $r \in S$ \\
\begin{equation} \label{Equation:KeySequences}\left( 2 \star \sum_{a \in S \setminus \{ r \}}d(M_a)2 \star \left( s_{a+1,a}(M_{a+1}) + \cdots + s_{r,a}(M_r) + x_a \right) + d(M_r)2^{x_r} + y \right)_{x_0,\ldots,x_r \in \mathbb{N}_0}. \end{equation} We shall take $f_S$ to be an appropriate limit point of $f$ along this sequence. More carefully, recall that the auxiliary colouring $f_{S,r,M_0,\ldots,M_r}(x;y)$ records the colouring $f$ along the sequence in (\ref{Equation:KeySequences}). Now apply Fact~\ref{Fact:Compactness} to an appropriate product colouring (or otherwise) to obtain the following.

\begin{claim} There exist infinite sets $C'_0,\ldots,C'_r$ such that $C'_0 \subseteq C_0, \ldots, C'_{r-1} \subseteq C_{r-1}$, $C'_r \subseteq \mathbb{N}$ and each of the limits  
\[ \lim_{M_t \in C'_t} \cdots \lim_{M_0 \in C'_0} f_{S,r,M_0,\ldots,M_r} 
\] exist, for $t \in [0,r]$, $S \subseteq [r]$, and $M_{t+1}, \ldots, M_r \in \mathbb{N}$. \qed
\end{claim} 
%Consider the product colourings $F_{M_0,\ldots,M_r}$, defined for each $(M_0,\ldots,M_r) \in C_0 \times \cdots \times C_{r-1} \times \mathbb{N}$, 
%\[ F_{M_0,\ldots,M_r}(x;y)  =\left( f_{S,r,M_0,\ldots,M_r}(x;y) \right)_{S \subseteq [r]} ,
%\] where $x \in \mathbb{N}^{[0,r]}$, and $y \in \mathbb{N}$. Apply fact \ref{Fact:Compactness}, to find $C'_1 \subseteq C_1,\ldots,C'\s
%each of the limits 
%\[ \lim_{M_t} \cdots \lim_{M_0} F_{M_0,\ldots,M_r} 
%\] exist for $t \in [0,r]$. This is equivalent (by definition) to the statement that each of the colourings 
%\[ \lim_{M_t} \cdots \lim_{M_0} f_{S,r,M_0,\ldots,M_r} 
%\] exist for $t \in [0,r]$. This completes the proof of the claim. \qed
So we choose $C'_0,\ldots,C'_r$ to be our ``infinite sets of integers'' as in the statement of the Lemma and note that these sets guarantee the existence of the limits in Condition~\ref{Convergent} of the Lemma. Moreover, observe that each of the synchronized arrays $\left( f_S, \partial f_S, \ldots, \partial^lf_{S} :  \emptyset \not= S \subseteq [i]  \right)$, $i \in [r-1]$, will still be synchronized after restricting our indices to the $C'_i \subseteq C_i$, by Lemma~\ref{synchronizedArrayFact}.
%$(d_i(M_i),s_i(M_i))$, $i \in [r-1]$, even after we have restricted the $M_1,\ldots,M_r$ (Fact \ref{synchronizedArrayFact}). Moreover, passing to the subsequence of the $M_0,\ldots,M_r$ does not alter the various convergences that we have already established. Hence we do not introduce any new notation for the subsequence of the $M_0,\ldots,M_r$, that we have passed to. 
\paragraph{}
We now define our new colourings $f_{S}$, for each $\emptyset \not= S \subseteq [0,r]$, with $r \in S$. We set  
\[  f_{S} = \lim_{M_r \in C'_r} \cdots \lim_{M_0 \in C'_0} f_{S,r,M_0,\ldots,M_r}, \] for each $\emptyset \not=  S \subseteq [0,r]$, with $r \in S$.
\paragraph{}
Finally, we apply Lemma~\ref{synchronizedArrayFact} to extend the synchronized array 
\[ \left( f_S, \partial f_S , \ldots, \partial^l f_S \right)_{\emptyset \not= S \subseteq [0,r-1]} 
\] to include all of the (newly constructed) colourings $f_{S}$ for all $\emptyset \not= S \subseteq [0,r]$, with $r \in S$. As a result, we obtain a synchronized array 
$\left( f_S, \partial f_S , \ldots, \partial^l f_S \right)_{\emptyset \not=S \subseteq [r]}$, with some (new) profile which we denote by $(d_{r+1}(M),s_{r+1}(M))_{M \in \mathbb{N}}$. We have thus constructed all the objects required by the lemma and have checked that they satisfy Condition~\ref{synchronizedArrays}. In what follows, we check that these objects satisfy Condition~\ref{Convergent}. 

\paragraph{} \emph{Check of Condition~\ref{Convergent} :} First note that we have already ensured that each of the limits appearing in Condition~\ref{Convergent} indeed exist. All that remains is to check that the identities hold. 
%\[ \lim_{M_i \in C'_i} \cdots \lim_{M_0 \in C'_0} f_{S,i,M_0,\ldots,M_i} = \partial^{i - \max{S}}f_S, 
%\] for $S \subseteq [0,r]$. 
Now if $i \in [0,r-1]$, we have the desired identity by induction and this identity remains unaltered after we pass to a subsequence of the $M_0,\ldots,M_r$, as we have done above. Hence only the identities
\begin{equation} \label{Equation:SequenceAgreement0} \lim_{M_r \in C'_r} \cdots \lim_{M_0 \in C'_0} f_{S,i,M_0,\ldots,M_r} = \partial^{r - \max{S}}f_S 
\end{equation} remain to be checked for $\emptyset \not= S \subseteq [0,r]$. 
\paragraph{} 
As we shall only consider $M_i \in C'_i$ for $i \in [r]$, we shall drop the extra notation in the subscript of the limits. First, dealing with a trivial case, note that if $r \in S$, we have
\[ \lim_{M_r} \cdots \lim_{M_0} f_{S,r,M_0,\ldots,M_r} = f_S = \partial^{0}f_S ,
\] by construction of the colouring $f_S$. Proceeding to the non-degenerate case, we fix some $S \subseteq [r-1]$ and observe that, by induction, we have  
\[ \lim_{M_{r-1}} \cdots \lim_{M_0} f_{S,r-1,M_0,\ldots,M_{r-1}} = \partial^{r-1 - \max{S}}f_S. 
\]So for any fixed value of $M_r \in C'_r$, we have 
\begin{equation} \label{equ:limInMainLem} \lim_{M_{r-1}} \cdots \lim_{M_0} f_{S,r-1,M_0,\ldots,M_{r-1}}(x + s_r(M_r) ;y ) = \partial^{r-1 - \max{S}}f_S(x + s_r(M_r);y) = \partial^{r - \max{S}}f_S(x;y), \end{equation} where the last equation holds for any $x\in \mathbb{N}^{[0,r]},y \in \mathbb{N}$ with $y,|x|_{\infty} \leq M_r$. Now note that 
\[ f_{S,r,M_0,\ldots,M_r}(x;y) =  f_{S,r-1,M_0,\ldots,M_{r-1}}(x + s_r(M_r);y ) 
\] for $y,|x|_{\infty} \leq M_r$. Therefore and substituting this equation into~(\ref{equ:limInMainLem}) and taking the limit as $M_r \in C'_r$ tends to infinity  yields
\[ \lim_{M_{r}} \cdots \lim_{M_0} f_{S,r,M_0,\ldots,M_r} =  \partial^{r - \max{S}}f_S, 
\] as desired.
\paragraph{}
This completes the proof of Item \ref{Convergent} in the induction hypothesis and hence concludes the proof, by induction. 
\end{proof}
In what follows, we put the pieces together to prove Theorem \ref{MainTheorem}. 
\paragraph{}
\emph{Proof of Theorem \ref{MainTheorem}:} Let $f$ be a $k$-colouring of $\mathbb{N}$. By Lemma \ref{BasicRamseyLemma}, there exists some integer $N = N(k,m)$ such that every $k$-colouring of $\mathcal{P}([N])$ admits $A_1,\ldots , A_m \subseteq [N]$ such that $\max A_i < \min A_{i+1}$, $i \in [m-1]$, and such that $FU(A_1,\ldots,A_m)$ is monochromatic. Now, apply Lemma \ref{structureBuildingLemma} with $r = l = N$. We define a colouring $F : \mathcal{P}([N]) \rightarrow [k]$ by 
\[ F(S) = \partial^{N - \max S }f_S(0;0). 
\] In other words, we define $F(S)$ to be the colour attained by 
\begin{equation}\label{equ:formOfa_i} \lim_{M_N \in C_N} \lim_{M_{N-1} \in C_{N-1}} \cdots \lim_{M_0 \in C_0} f\left( 2 \star \sum_{a \in S}d_a(M_a)2^{s_{a+1,a}(M_{a+1}) + \cdots + s_{N,a}(M_r)} \right). 
\end{equation} Here, the values $s_{i,j}(M_k),d_i(M_i)$ are as in the statement of Lemma \ref{structureBuildingLemma}. 
\paragraph{}
Now apply Lemma \ref{BasicRamseyLemma} to find sets $A_1,\ldots,A_m \subseteq [N]$ such that $FU(A_1,\ldots,A_m)$ is monochromatic with respect to the colouring $F$ and such that $\max A_i < \min A_{i+1}$ for each $i \in [m-1]$. 
\paragraph{}
To understand what we have gained, for each $S \subseteq [N]$, let us define the numbers $a(S) = a\left(S; (M_a)_{N \geq a \geq \min {S}}\right)$ by 
\[ a(S) = 2 \star \sum_{a \in S}d_a(M_a)2^{s_{a+1,a}(M_{a+1}) + \cdots + s_{N,a}(M_N)} .
\] In this notation, we have that the $a(A_1),\ldots, a(A_m)$ form a monochromatic FP set for sufficiently large values $M_0 \in C_0, \ldots, M_N \in C_N$  (in the sense of Equation \ref{equ:formOfa_i}), as $a(A_i \cup A_j) = a(A_i)a(A_j)$, for each $i\not= j \in [m]$. In what follows, we show that the can pick the parameters $M_0,\ldots,M_N$ so that $a(A_1),\ldots,a(A_m)$ generate a monochromatic $FEP_W$ set. 
\paragraph{}
Before we turn to our main task, note that we may assume $W(A) \geq W(B)$, for all $A, B \in \mathbb{N}^{(\leq m)}$ with $A \supseteq B$, by possibly replacing $W$ with $W'(A) = \max\{W(B) : B \subseteq A \}$. The following claim simply makes explicit the various dependencies that we need on $M_N,\ldots, M_1$. We note that the proof of the claim below is trivial, as each integer $M_t$ depends only on $M_{t+1},\ldots,M_N,U_{t},\ldots,U_N$,
while each $U_t$ depends only on $U_{t+1},\ldots,U_N,M_{t+1},\ldots,M_N$.
\begin{claim} \label{ChoiceOfMt} There exists $(M_0,\ldots , M_N) \in C_0 \times \cdots \times C_N$, and auxiliary integers $U_0,\ldots,U_N \in \mathbb{N}$, so that, for each $t \in [0,N]$, $U_t$ satisfies the following conditions
\begin{enumerate} 
\item  $U_t  \geq \max FEP_W\left( \{ a(A_i) : A_i \subseteq [t+1,N] \} \right)$; 
\item  \label{ChoiceofMt:MtBiggerThanPrevExponents} 
$U_t \geq M_{t+1} + \sum_{N \geq i \geq t+1} |s_i(M_i)|_{\infty} + d_i(M_i)M_i $;
\item \label{ChoiceofMt:MtBiggerThanWeightedExponents} $ U_t \geq \sum_{N \geq i \geq t+1} |s_i(M_i)|_{\infty} + d_i(M_i)W(\{ a(S): S \subseteq [t+1,N] \})2\star \left( s_{i+1,i}(M_i) + \cdots + s_{N,i}(M_N) \right);$
\item $U_t \geq U_{t+1}$;
\end{enumerate}
and $M_t$ satisfies the following conditions.
\begin{enumerate}
\item \label{ChoiceofMt:Agreement} For each non-empty $S \subseteq [0,N]$, and $l \in [\max S ,N]$, we have
\[ \lim_{M_{t-1}} \cdots \lim_{M_0} f_{S,l,M_0,\ldots,M_{l}} (x;y) = \lim_{M_t} \lim_{M_{t-1}} \cdots \lim_{M_0} f_{S,l,M_0,\ldots,M_{l}} (x;y) 
\] for all $x,y$ satisfying $|x|_{\infty},y \leq U_t$;
\item \label{ChoiceofMt:MtBiggerThanUt} for each $M_t$ we have $M_t \geq U_t$.
\end{enumerate}
\end{claim}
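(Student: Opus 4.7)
The plan is to build the tuples $(U_t)_{t=0}^{N}$ and $(M_t)_{t=0}^{N}$ by a single downward recursion on $t$, choosing them in the alternating order $U_N, M_N, U_{N-1}, M_{N-1}, \ldots, U_0, M_0$. The dependency pattern noted in the paragraph preceding the claim is exactly what makes this possible: each of the four lower bounds on $U_t$ involves only $U_{t+1},\ldots,U_N$ and $M_{t+1},\ldots,M_N$ (together with quantities such as the $a(\cdot)$-values and the weight $W$, which depend only on those data), while each of the two conditions on $M_t$ involves only $U_t$ and previously-chosen data. So there is no circularity, and it suffices to verify that each stage of the recursion can actually be carried out.

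For the initialisation $t=N$: the sums $\sum_{N \geq i \geq N+1}$ and the family $\{a(A_i) : A_i \subseteq [N+1,N]\}$ are empty, and $U_{N+1}$ has not been defined, so conditions (1)--(4) on $U_N$ reduce to finitely many fixed lower bounds, and any sufficiently large $U_N$ works. Having fixed $U_N$, we choose $M_N \in C_N$ with $M_N \geq U_N$ and satisfying the agreement clause (5). That clause is nontrivial only when $M_N$ actually occurs in $f_{S,l,M_0,\ldots,M_l}$, i.e.\ when $l=N$. For each such $S$ and each of the finitely many test pairs $(x;y)$ with $|x|_\infty, y \leq U_N$, Condition~\ref{Convergent} of Lemma~\ref{structureBuildingLemma} guarantees the existence of the iterated limit along $C_N$, so the colouring $f_{S,N,M_0,\ldots,M_N}(x;y)$ stabilises on this finite test set once $M_N$ is chosen sufficiently late in $C_N$. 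A finite intersection of tail conditions in $C_N$ then yields a suitable $M_N$.

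The recursive step $t<N$ is entirely analogous. With $M_{t+1},\ldots,M_N$ and $U_{t+1},\ldots,U_N$ already fixed, first pick $U_t$ larger than each of the four explicit quantities in conditions (1)--(4), each of which is now a concrete integer. Then pick $M_t \in C_t$ large enough that $M_t \geq U_t$ and that the agreement clause (5) holds for every tuple $(S,l,x,y)$ with $\emptyset \neq S \subseteq [0,N]$, $l \in [\max S, N]$ with $l \geq t$, and $|x|_\infty, y \leq U_t$; note that when $l<t$ the variable $M_t$ does not appear in $f_{S,l,M_0,\ldots,M_l}$ and the equation is automatic, so only finitely many nontrivial cases remain. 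Each of these is a tail condition on $C_t$ by Condition~\ref{Convergent} of Lemma~\ref{structureBuildingLemma}, and so they can all be met simultaneously by a sufficiently late $M_t \in C_t$. The only point of substance is keeping track of the dependency structure so that each stage uses only previously-chosen data; once that is observed the construction is routine, which is why the surrounding text declares the claim trivial.
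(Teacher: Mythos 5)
Your proof is correct and follows exactly the route the paper indicates; the paper merely remarks that the claim ``is trivial, as each integer $M_t$ depends only on $M_{t+1},\ldots,M_N,U_{t},\ldots,U_N$, while each $U_t$ depends only on $U_{t+1},\ldots,U_N,M_{t+1},\ldots,M_N$,'' and gives no further argument. You supply precisely the downward alternating recursion $U_N, M_N, U_{N-1}, M_{N-1}, \ldots, U_0, M_0$ that this dependency structure invites, invoking Condition~\ref{Convergent} of Lemma~\ref{structureBuildingLemma} to meet the agreement clause as a finite intersection of tail conditions on $C_t$.
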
 
Actually, Conditions~\ref{ChoiceofMt:MtBiggerThanPrevExponents},\ref{ChoiceofMt:MtBiggerThanWeightedExponents} on $U_t$ are redundant, but we include them as they come at no added cost and they make explicit the precise conditions that we shall require later. Also notice that Condition~\ref{ChoiceofMt:Agreement} on the choice of $M_t$ implies that for each non-empty set $S \subseteq [0,N]$ and $l \in [\max S,N]$, we have that 
\begin{equation} \label{ChoiceofMt:agree2} f_{S,l,M_0,\ldots,M_l}(x;y) = \partial^{l-\max S} f_S(x;y), \end{equation} for all $x,y$ satisfying $|x|_{\infty},y \leq U_l$.
\paragraph{}
We now turn to check that the choice of $M_0,\ldots,M_N$ granted by Claim \ref{ChoiceOfMt} ensures that the integers $ a(A_1) = a(A_1;M_0,\ldots,M_N), \ldots, a(A_m) = a(A_m;M_0,\ldots,M_N) $ generate a monochromatic FEP set with weight $W$. As we have fixed the choices of $M_1,\ldots,M_M$, for $i \in [0,N]$, set 
\[ s_i = s_i(M) = (s_{i,0}(M_i),\ldots, s_{i,N}(M_i) ), \] 
\[ d_i = d_i(M_i) ,\]
\[ \tilde{s}_i = s_{i+1,i}(M_{i+1}) + s_{i+2,i}(M_{i+2}) + \cdots + s_{N,i}(M_i), \]
and \[ a_i = a(A_i).\]
\paragraph{}
We begin checking that the FEP set generated by $a_1,\ldots,a_m$ is monochromatic with the following observation, which recalls the relationship between the colourings $\partial^{j+1}f_S(x)$ and $\partial^jf_S(x)$
in the context of the proof. Formally, it follows from Condition \ref{synchronizedArrays} in Lemma \ref{structureBuildingLemma}.

\begin{observation} \label{ObsInProofOfTheorem} Let $S \subseteq [N]$. If $t$ is an integer such that $t \geq \max{S} + 1$, and $j \in [0,N-1]$, we have that 
\[ \partial^{j+1}f_S(x ; y) = \partial^jf_S(x + s_{t} + v d_t; y ), 
\]for all $y \in \mathbb{N}_0$, $x,v \in \mathbb{N}_0^{[0,N]}$ with $y, |x|_{\infty}, |v|_{\infty}  \leq M_t$.
\end{observation}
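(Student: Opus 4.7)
The plan is to recognize that Observation~\ref{ObsInProofOfTheorem} is a direct unpacking of Condition~\ref{synchronizedArrays} of Lemma~\ref{structureBuildingLemma} together with Condition~(\ref{Def:SynArrayAlingment}) in the definition of a synchronized array. The only real content is to check that the hypothesis on $t$ and $S$ places us in the correct instance of Lemma~\ref{structureBuildingLemma}, after which the identity drops out.

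First I would translate the hypothesis $t \geq \max S + 1$ into the statement $S \subseteq [0,t-1]$, so that $S$ lies in the index set of the array granted by Condition~\ref{synchronizedArrays} at the value $i = t-1$ (using $r = l = N$ as chosen in the proof of Theorem~\ref{MainTheorem}). That condition then gives that
\[ \left( f_S, \partial f_S, \ldots, \partial^l f_S \right)_{\emptyset \neq S \subseteq [0,t-1]} \]
is a synchronized array with profile $(d_t(M), s_t(M))_{M \in \mathbb{N}}$.

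Next I would specialize to $M = M_t$: the definition of a synchronized array furnishes an $M_t$-synchronizing fat grid
\[ G(M_t) = s_t(M_t) + [0,M_t]^{[0,N]} + d_t(M_t)[0,M_t]^{[0,N]} \]
of dimension $N+1$ and length and thickness $M_t$. Recalling the abbreviations $s_t = s_t(M_t)$ and $d_t = d_t(M_t)$ fixed in the proof of Theorem~\ref{MainTheorem}, Condition~(\ref{Def:SynArrayAlingment}) of the definition applied to this grid, with the role of the function index $j$ played by $S$ and the role of the derivative index $i$ played by our $j \in [0,N-1] = [0,l-1]$, yields exactly
\[ \partial^j f_S(x + s_t + d_t v; y) = \partial^{j+1} f_S(x; y), \]
valid for all $y \in [0,M_t]$ and $x, v \in [0,M_t]^{[0,N]}$, which is the desired identity.

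There is no real obstacle here; the observation is stated only to record, in a form that can be quoted verbatim in the rest of the argument, the effect that translating the input of $\partial^j f_S$ by $s_t + d_t v$ has on the colour it returns. The side conditions $t \leq N$ and $S \neq \emptyset$ are implicit in the proof of Theorem~\ref{MainTheorem} (since the $A_i$'s are non-empty subsets of $[N]$ and $t$ will only ever be used in the range $[\max S + 1, N]$), so no further hypotheses need to be verified.
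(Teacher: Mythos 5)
Your proposal is correct and takes essentially the same approach as the paper, which merely states that the observation ``follows from Condition~\ref{synchronizedArrays} in Lemma~\ref{structureBuildingLemma}'' without elaboration; your unpacking — setting $i = t-1$ so that $S \subseteq [0,t-1]$, reading off the profile $(d_t(M), s_t(M))$, specializing to $M = M_t$, and invoking Condition~(\ref{Def:SynArrayAlingment}) of the synchronized-array definition — is exactly the intended verification.
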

From this observation, we quickly arrive at the following claim. 

\begin{claim} \label{CascadeClaim} Suppose that $S \subseteq [N]$ and that $u,t$ are integers such that $u \geq 0$, $ t \in [N]$ and $t-u \geq \max S $, then
\[ \partial^{t - \max S} f_S(x;y) = \partial^{t - u - \max{S}} f_S\left( x + \sum_{i \in [t-u+1,t]} s_i + v_i d_i \ ; y \right),
\]  for all $y \in \mathbb{N}_0$ and $x,v_{t-u+1},\ldots,v_t \in \mathbb{N}_0^{[0,N]}$ with $y,|x|_{\infty} \leq M_t$, and $|v_i|_{\infty} \leq M_i$, for each $i \in [t-u+1,t]$. 
\end{claim}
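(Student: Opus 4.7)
The plan is to prove the Claim by induction on $u \geq 0$, peeling off one application of Observation~\ref{ObsInProofOfTheorem} at a time and using the monotonicity of $M_t$ in $t$ to verify that the accumulated shifts stay within the admissible range.

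The base case $u = 0$ is immediate: the sum $\sum_{i \in [t+1,t]}$ is empty by convention, and the two sides of the identity coincide trivially. For the induction step, assume the Claim holds for $u-1 \geq 0$. Applying the inductive hypothesis with $u-1$ in place of $u$ gives
\[
\partial^{t - \max S} f_S(x;y) \;=\; \partial^{t - u + 1 - \max S} f_S\!\left( x + \sum_{i \in [t-u+2,\,t]} (s_i + v_i d_i);\, y \right),
\]
provided $y, |x|_{\infty} \leq M_t$ and $|v_i|_{\infty} \leq M_i$ for $i \in [t-u+2,t]$, which hold by hypothesis. I would then apply Observation~\ref{ObsInProofOfTheorem} once more, at shift index $\tau := t - u + 1$ and derivative level $j := t - u - \max S$, to strip off one more derivative and pick up the summand $s_{t-u+1} + v_{t-u+1} d_{t-u+1}$, thereby reaching level $t - u - \max S$ as required.

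The main obstacle — really the only nontrivial step — is verifying the hypotheses of the Observation at the last peeling. The conditions $\tau \geq \max S + 1$ and $j \in [0,N-1]$ follow from $t - u \geq \max S$ and $t \leq N$. The delicate point is the magnitude bound: we need $|x^{\star}|_{\infty} \leq M_{t-u+1}$, where $x^{\star} = x + \sum_{i \in [t-u+2,t]}(s_i + v_i d_i)$. Here I would use that the sequence $(M_t)$ is non-increasing in $t$ (an immediate consequence of Conditions~\ref{ChoiceofMt:MtBiggerThanPrevExponents} and~\ref{ChoiceofMt:MtBiggerThanUt} of Claim~\ref{ChoiceOfMt}, which give $M_{t-1} \geq U_{t-1} \geq M_t$), together with the explicit bound
\[
M_{t-u+1} \;\geq\; U_{t-u+1} \;\geq\; M_{t-u+2} + \sum_{i \geq t-u+2} \bigl(|s_i|_{\infty} + d_i M_i\bigr)
\]
from Condition~\ref{ChoiceofMt:MtBiggerThanPrevExponents}. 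Since $|x|_{\infty} \leq M_t \leq M_{t-u+2}$ and the sum on the right includes all the terms contributing to $|x^{\star} - x|_{\infty}$, this dominates $|x^{\star}|_{\infty}$, as required. The remaining bounds $y \leq M_t \leq M_{t-u+1}$ and $|v_{t-u+1}|_{\infty} \leq M_{t-u+1}$ are built into the hypotheses. Combining the inductive step with the single new application of the Observation yields the Claim.
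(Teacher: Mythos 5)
Your proof is correct and takes essentially the same route as the paper: induction on $u$, peeling off one application of Observation~\ref{ObsInProofOfTheorem} per step, and verifying the magnitude constraints via Conditions~\ref{ChoiceofMt:MtBiggerThanPrevExponents} and~\ref{ChoiceofMt:MtBiggerThanUt} of Claim~\ref{ChoiceOfMt} together with the monotonicity $M_t \geq M_{t+1}$. The only cosmetic difference is the direction of indexing (you pass from $u-1$ to $u$ and peel $s_{t-u+1}$; the paper passes from $u$ to $u+1$ and peels $s_{t-u}$), which does not change the argument.
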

\emph{Proof of Claim :} We prove this claim by induction on $u$. Notice that if $u = 0$, there is nothing to prove - we have an identity. So suppose we have shown 
\begin{equation} \label{equ:CascaseClaim} \partial^{t - \max S} f_S(x;y) = \partial^{t - u - \max{S}} f_S\left( x + \sum_{i \in [t-u+1,t]} s_i + v_i d_i \ ; y \right),\end{equation} for some $u \geq 0$ and for all $y \in \mathbb{N}$, $x,v_i \in \mathbb{N}^n_0$, $i \in [t-u+1,t]$, satisfying $y,|x|_{\infty} \leq M_t$, and $|v_i|_{\infty} \leq M_i$. Assume that $t-u-1\geq \max S$, otherwise there is nothing more to prove. Then apply Observation \ref{ObsInProofOfTheorem} to obtain 
\[ = \partial^{t - u - 1 - \max{S}} f_S\left( \left( x + \sum_{i \in [t-u+1,t]} s_i + v_i d_i \right) + s_{t-u} + v_{t-u}d_{t-u} \ ; y \right), \] for $y,|x|_{\infty} \leq M_t,|v_i|_{\infty} \leq M_i$, $i \in [t-u-1,t]$. Which holds by Observation \ref{ObsInProofOfTheorem}, as $y \leq M_t \leq M_{t-u}$ and each coordinate of the left argument of $\partial^{t-u-\max S}f_S$ at equation \ref{equ:CascaseClaim} is bounded by 
\[ \left( |x|_{\infty} + \sum_{i \in [t-u+1,t]} |s_i|_\infty + |v_i|_{\infty} d_i \right) \leq M_t + \sum_{i \in [t-u+1,t]} |s_i|_\infty + M_i d_i \leq M_{t-u}
,\] where the last inequality holds by Condition~\ref{ChoiceofMt:MtBiggerThanPrevExponents} on $U_t$ and Condition~\ref{ChoiceofMt:MtBiggerThanUt} on $M_t$, in Claim \ref{ChoiceOfMt}. Thus we are done, by induction. \qed
\paragraph{}

To show that $FEP_W(a_1,\ldots,a_m)$ is monochromatic, we again apply induction: we show that for each 
$\alpha \in FEP_W(a_1,\ldots,a_m) \setminus FP(a_1,\ldots,a_m)$ we can find some $\beta < \alpha$, $\beta \in FEP_W(a_1,\ldots,a_m)$
that has the same colour as $\alpha$. Hence, by induction, we shall see that each element of $FEP_W(a_1,\ldots,a_m)$ is given the same colour as 
some element in $FP(a_1,\ldots,a_m)$, which we already know to be monochromatic. For example, if we first know that $FP(a,b,c,d)$ is monochromatic in colour RED, say, we show that $a^bc^d \in FEP_W(a,d,c,d)$ is RED by showing that $a^bc^d$ has the same colour as $ac^d$, which we then show has the same colour as $ac \in FP(a,b,c,d)$. 

\begin{claim} For every $\alpha \in FEP(a_1,\ldots,a_m) \setminus FP(a_1,\ldots,a_m)$ there is some $\beta \in FEP(a_1,\ldots,a_m)$ such that 
$\beta < \alpha$ and $f(\alpha) = f(\beta)$.
\end{claim}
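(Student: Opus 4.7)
My aim is to show that whenever $\alpha\in FEP_W(a_1,\ldots,a_m)\setminus FP(a_1,\ldots,a_m)$, there is a strictly smaller $\beta\in FEP_W$ with $f(\beta)=f(\alpha)$; iterating, every element of $FEP_W$ shares a colour with some element of $FP(a_1,\ldots,a_m)$, which is already monochromatic by the choice of $A_1,\ldots,A_m$. To construct $\beta$, write $\alpha=\prod_{i\in B}a_i^{e_i}$, pick some $e_{i_0}\ne 1$, and factor $e_{i_0}=a_k\cdot(e_{i_0}/a_k)$ for some $k\in[m]\setminus B$ with $k>i_0$ and $p^{(i_0)}_k\ge 1$. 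Put
$$\beta=a_{i_0}^{e_{i_0}/a_k}\prod_{i\in B,\,i\ne i_0}a_i^{e_i}.$$
Decreasing the $a_k$-multiplicity in $e_{i_0}$ by one keeps $e_{i_0}/a_k\in FP_{[m]\setminus B,W}(a_{i_0+1},\ldots,a_m)$, so $\beta\in FEP_W$, and $\beta<\alpha$ since $a_k\ge 2$.

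\textbf{Equating the colours via a cascade.} Substituting $a_i=2^{E_i}$ with $E_i=\sum_{a\in A_i}d_a 2^{\tilde s_a}$, and $e_i=2^{\sum_j p^{(i)}_j E_j}$, both $\alpha$ and $\beta$ take the canonical form $2^{\sum_{a\in S}d_a 2^{\tilde s_a+x_a}}$ with $S=\bigcup_{i\in B}A_i$, $x^\alpha_a=\log_2 e_i$ for $a\in A_i$, and $x^\beta_a=x^\alpha_a-E_k$ for $a\in A_{i_0}$ (otherwise identical). Picking $l\in[\max S,N]$ large enough that Conditions~1 and~3 on $U_t$ in Claim~\ref{ChoiceOfMt} yield $|x^\alpha|_\infty,|x^\beta|_\infty\le U_l$, Equation~(\ref{ChoiceofMt:agree2}) gives $f(\alpha)=\partial^{l-\max S}f_S(x^\alpha;0)$ and $f(\beta)=\partial^{l-\max S}f_S(x^\beta;0)$. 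Now invoke Claim~\ref{CascadeClaim} with $u=l-\max S$ to descend both colours all the way to $\partial^0 f_S$ via cumulative shifts $\sum_{t\in[\max S+1,\,l]}(s_t+v_t d_t)$; because the fixed vectors $s_t$ cancel between the two cascades, it suffices to choose $v^\alpha_t,v^\beta_t$ satisfying
$$\sum_t d_t\bigl(v^\beta_t-v^\alpha_t\bigr)\;=\;E_k\,\mathbf{1}_{A_{i_0}}\;=\;\sum_{b\in A_k}d_b\,2^{\tilde s_b}\,\mathbf{1}_{A_{i_0}}.$$
The natural choice is $v^\alpha_t=v^\beta_t$ everywhere except at indices $t=b\in A_k$ and coordinates $a\in A_{i_0}$, where one sets $v^\beta_{b,a}-v^\alpha_{b,a}=2^{\tilde s_b}$. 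Both cascades then land at the same argument of $\partial^0 f_S$, forcing $f(\alpha)=f(\beta)$.

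\textbf{Main obstacle.} The technical heart of the argument is verifying the size constraints required by the Cascade Claim and by Equation~(\ref{ChoiceofMt:agree2}); concretely, one needs $2^{\tilde s_b}\le M_b$ for each $b\in A_k$, together with the bounds $|x^\alpha|_\infty,|x^\beta|_\infty\le U_l$ used above. These are precisely the exponential magnitude inequalities built into Conditions~2 and~3 on $U_t$ in Claim~\ref{ChoiceOfMt} (Condition~3 explicitly contains the factor $2^{\tilde s_i}$), together with $M_t\ge U_t$ and the nested order in which $M_N,M_{N-1},\ldots,M_0$ are chosen. Once this bookkeeping is in place, every step above is a direct substitution.
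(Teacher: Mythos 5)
Your overall plan (induction by shrinking one element of the FEP-set at a time until you reach the monochromatic FP-set) is the same as the paper's. The construction of $\beta$ differs: the paper takes $j^*=\min\bigcup_{i\in B}C_i$ and removes \emph{all} factors of $a_{j^*}$ from \emph{every} exponent $e_i$ simultaneously, whereas you remove a \emph{single} factor of an \emph{arbitrary} admissible $a_k$ from a \emph{single} $e_{i_0}$. That decomposition is not itself a problem, but the colour-comparison step, as you set it up, does not go through.

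The gap is structural, not just bookkeeping. You compare $f(\alpha)$ and $f(\beta)$ by expressing both as $\partial^{\,l-\max S}f_S(\cdot\,;0)$ with $S=\bigcup_{i\in B}A_i$, then cascading down to $\partial^0 f_S$ via perturbations $v_t d_t$ for $t\in[\max S+1,\,l]$, and planning to insert the correction at the levels $t\in A_k$. But there is no reason for $A_k$ to sit inside $[\max S+1,\,l]$: with your $S$, $\max S=\max A_{\max B}$, and whenever $k<\max B$ (for example $B=\{1,5\}$, $i_0=1$, $k=3$, so $A_3$ lies entirely below $\max A_5$), the block $A_k$ is strictly below $\max S$ and Claim~\ref{CascadeClaim} never touches those levels. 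The correction $v^\beta_b-v^\alpha_b=2^{\tilde s_b}\mathbf{1}_{A_{i_0}}$ therefore cannot be injected, and the two cascades do not meet. The paper avoids this precisely by (i) choosing $j^*$ minimal, and (ii) splitting $B=B^-\cup B^+$ with $B^-=\{i\in B: i<j^*\}$, taking $S=\bigcup_{i\in B^-}A_i$, and pushing the entire $B^+$-part into the \emph{second} argument $\gamma$ of the partitioned colouring. Only then is $A^*=A_{j^*}$ guaranteed to lie inside the cascade window $[\max S+1,\max A^*]$. Dropping the $B^-/B^+$ split and setting $y=0$ also undermines the magnitude bounds you defer to the ``Main obstacle'' paragraph: with $S=\bigcup_{i\in B}A_i$ your $x^\alpha_a$ for $a\in A_i$, $i\in B^+$, involves $\log_2 e_i$, a quantity governed by $M_b$ with $b>\max S$, so the required inequality $|x^\alpha|_\infty\le U_l$ for $l\ge\max S$ is not supplied by Conditions~2 and~3 in Claim~\ref{ChoiceOfMt}; the paper's bounds (\ref{boundOnRho}) and (\ref{equ:BoundonRhoTilde}) are proved using the minimality of $j^*$ together with the fact that $\rho^+$ has been shunted to $\gamma$. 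In short, your decrement-by-one $\beta$ is fine, but the colour-equality argument needs the $B^-/B^+$ partition, the use of the $y$-slot for $\gamma$, and a minimal choice of the removed base; without these, both the cascade insertion and the $U_t$-bounds fail.
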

\emph{Proof of Claim :}
Let $\alpha \in FEP(a_1,\ldots,a_m) \setminus FP(a_1,\ldots,a_m)$ and write
\begin{equation}\label{Equ:FormOfAlpha} \alpha = \prod_{i \in B} a_i^{e_i}, 
\end{equation} where $B \subseteq [m]$, and $e_i \in FP_{[m]\setminus B, W}(x_{i+1},\ldots,x_m)$. For each $e_i$, we fix some representation of $e_i$, as product of powers of the elements $\{ a_j \}_{j \not\in B, j > i}$, and define $C_i$ to be the support of this product. That is, for $i \in B$, define $C_i \subset [m]$ to be a set for which  
\[ e_i = \prod_{j \in C_i} a_j^{p_{i,j}} ,
\] where $p_{i,j} \in [W\left(\{ a_{j+1},\ldots,a_m \} \right)]$, for each $j \in C_i$.
Now since $\alpha \not\in FP(a_1,\ldots,a_m)$, it must be that $\bigcup_{i \in B} C_i \not= \emptyset$. So we may set $j^* = \min \bigcup_{i \in B} C_i$. We define the number $\beta$ to be $\alpha$ ``with $a_{j^*}$ removed from the exponents''. More formally, if $i \in B$, let $e'_i \in \mathbb{N}$ be the integer satisfying $e'_i \cdot a_{j^*}^{p_{i,j^*}} = e_i$. Then define 
\[ \beta = \prod_{i \in B } a_i^{e'_i} .
\] Notice that $\beta \in FEP(a_1,\ldots,a_m)$ and $\beta < \alpha$. We show that $f(\alpha) = f(\beta) $. 

\begin{claim} \label{Claim:falphaEqualTofBeta}
Let $\alpha$, $\beta$ be as above, then $f(\alpha) = f(\beta)$.
\end{claim}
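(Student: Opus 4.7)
The plan is to express both $f(\alpha)$ and $f(\beta)$ as values of the same derived colouring $\partial^{N-\max S} f_S$, where $S = \bigcup_{i \in B} A_i$, and then to show that these two values coincide by exploiting the consistent fat-grid structure underlying the derived colourings (via Claim~\ref{CascadeClaim}).

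The first step is to rewrite $\alpha$ and $\beta$ to match the auxiliary colouring $f_{S, N, M_0, \ldots, M_N}$. Setting $L_j := \sum_{a \in A_j} d_a 2^{\tilde s_a}$ so that each $a_j = 2^{L_j}$ is itself a power of two, we may write $e_i = 2^{q_i}$ with $q_i = \sum_{j \in C_i} p_{i,j} L_j$, and $e'_i = 2^{q'_i}$ where $q'_i = q_i - p_{i,j^*} L_{j^*}$ if $j^* \in C_i$ and $q'_i = q_i$ otherwise. Expanding and using the disjointness of the $A_i$ (guaranteed by $\max A_i < \min A_{i+1}$), we obtain
\[ \alpha = 2 \star \sum_{a \in S} d_a \cdot 2^{\tilde s_a + q_{i(a)}}, \]
where $i(a) \in B$ is the unique index with $a \in A_{i(a)}$; the analogous formula with $q'$ in place of $q$ gives $\beta$. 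These match precisely $f_{S, N, M_0, \ldots, M_N}(\mathbf x; 0)$ and $f_{S, N, M_0, \ldots, M_N}(\mathbf x'; 0)$, where $\mathbf x, \mathbf x' \in \mathbb{N}_0^{[0,N]}$ have coordinates $x_a = q_{i(a)}$ and $x'_a = q'_{i(a)}$ for $a \in S$ (and zero elsewhere). Invoking equation~\eqref{ChoiceofMt:agree2} from Claim~\ref{ChoiceOfMt}, after verifying $|\mathbf x|_\infty, |\mathbf x'|_\infty \leq U_N$ via the estimate $q_{i(a)} \leq W \sum_{b > a} d_b 2^{\tilde s_b}$ combined with Condition~3 on $U_t$, then yields $f(\alpha) = \partial^{N-\max S} f_S(\mathbf x; 0)$ and $f(\beta) = \partial^{N-\max S} f_S(\mathbf x'; 0)$.

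It then remains to show $\partial^{N-\max S} f_S(\mathbf x; 0) = \partial^{N-\max S} f_S(\mathbf x'; 0)$, and for this I would apply the Cascade Claim to both sides: descending from level $N - \max S$ introduces shifts of the form $\sum_t (s_t + v_t d_t)$ with $v_t$'s freely chosen in $[0, M_t]^{[0,N]}$, and by picking different $v_t$'s for the $\alpha$-side and the $\beta$-side I would arrange that the shifted points coincide. The crucial observation is that the difference $\mathbf x - \mathbf x'$ is supported at coordinates $a < \min A_{j^*}$ (since $j^* \in C_{i(a)}$ forces $i(a) < j^*$, whence $a \leq \max A_{i(a)} < \min A_{j^*}$), and its $a$-entry there equals $p_{i(a), j^*} \cdot L_{j^*} = \sum_{b \in A_{j^*}} p_{i(a), j^*} \cdot 2^{\tilde s_b} \cdot d_b$; this exhibits $\mathbf x - \mathbf x'$ as a linear combination of the $d_b$'s for $b \in A_{j^*}$, which is exactly the shape the cascade shifts can absorb. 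I expect the main obstacle to be aligning the cascade levels with those required (particularly in the case where $A_{j^*}$ is not entirely above $\max S$, which calls for a more careful choice of descent) and verifying the size bounds $|v_b|_\infty \leq M_b$; these reduce to an estimate of the form $W \cdot 2^{\tilde s_b} \leq M_b$ that should follow from Conditions~1--3 on $U_t$ and $M_t$, together with $M_t \geq U_t$. Once this book-keeping is complete, the desired equality $f(\alpha) = f(\beta)$ is immediate.
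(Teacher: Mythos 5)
Your proposal takes a different decomposition from the paper: you set $S = \bigcup_{i \in B} A_i$ (the full support of all bases) and try to express both $\alpha$ and $\beta$ entirely in the first (spatial) coordinate of the partitioned colouring, with $y = 0$. The paper instead splits $B = B^- \cup B^+$ by comparison with $j^*$, takes $S = \bigcup_{i \in B^-} A_i$, and pushes the contribution of $B^+$ into the second coordinate $y = \gamma$. This difference is fatal, for two related reasons.

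First, the bound $|\mathbf x|_\infty, |\mathbf x'|_\infty \leq U_N$ that you need in order to invoke (\ref{ChoiceofMt:agree2}) at level $l = N$ cannot hold. The coordinate $\mathbf x_a = q_{i(a)}$ is of order $\sum_{b} d_b 2^{\tilde s_b}$ with $b$ ranging over the $A_j$ for $j \geq j^*$, hence over indices $b$ up to $N$; but in Claim~\ref{ChoiceOfMt} the integer $U_N$ is chosen \emph{before} any $M_i$, and at $t = N$ Conditions 2 and 3 on $U_t$ degenerate to empty sums, so $U_N$ carries no useful lower bound. The reduction $f(\alpha) = \partial^{N - \max S} f_S(\mathbf x;0)$ therefore fails at the first step. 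The paper avoids this by using $l = \max A^*$ for $\beta$ (where the exponent data $\rho_a$ involves only $d_b$ with $b > \max A^*$, since the $j^*$-factor has been removed, so $|\rho^-|_\infty$ genuinely is $\leq U_{\max A^*}$) and $l = \max S$ for $\alpha$, with the cascade bridging the two levels.

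Second, even granting the first step, Claim~\ref{CascadeClaim} applied from level $N-\max S$ down to $0$ provides shift freedom $v_i d_i$ only for indices $i \in [\max S + 1, N]$, while you correctly observe that $\mathbf x - \mathbf x'$ is a $d_b$-combination over $b \in A^* = A_{j^*}$. When $B^+ \neq \emptyset$ we have $\max S = \max A_{\max B^+} > \max A^*$, so $A^* \not\subseteq [\max S + 1, N]$ and the cascade simply has no access to those $d_b$'s; no ``more careful choice of descent'' within your framework fixes this. It is precisely the paper's smaller $S = \bigcup_{i \in B^-} A_i$ (which gives $\max S < \min A^*$, hence $A^* \subseteq [\max S + 1, \max A^*]$) and the choice $t = \max A^*$, $u = \max A^* - \max S$ that place the cascade exactly where the difference lives; the $B^+$ contribution, which would otherwise drive $\max S$ above $\max A^*$, is parked in the $y$-coordinate as $\gamma$ and controlled by Condition 1 on $U_t$ so that $\gamma \leq M_{\max A^*}$. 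Your core mechanism (reduce to a derived colouring, cascade, absorb $\mathbf x - \mathbf x'$ as $d_b$-multiples) is the right one, but the essential structural idea you are missing is the use of the partitioned $y$-coordinate to remove $B^+$ from the picture.
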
 

\emph{Proof of Claim \ref{Claim:falphaEqualTofBeta}:} We partition the indexes of the bases, $B$, into two sets $B = B^- \cup B^+$, where $B^-$ consists of all indexes $<j^*$, while $B^+$ consists of all indexes $>j^*$. Of course, $j^*$ cannot appear in $B$, due to the form of $FEP$ sets. Note that $B^-$ is the collection of indices $i$ where $a_{j^*}$ \emph{may possibly} appear as an exponent of $a_i$ in $\alpha$. We partition $B^-$ into the indexes of the bases where $a_{j^*}$ \emph{does} appear as an exponent and the indexes where $a_{j^*}$ \emph{does not} appear as an exponent. That is, write $B^- = B^-_0 \cup B^-_1$, where $B^-_0 = \{ i : j^* \not\in C_i \}$ and $B^-_1 = \{ i : j^* \in C_i \}$. For simplicity, we also set $S = \bigcup_{i \in B^-} A_i $ and $A^* = A_{j^*}$.
\paragraph{} We now expand $\alpha,\beta$ in terms of the parameters $d_a$,$\tilde{s}_a$,
\begin{equation} \label{def:OfBeta} \beta = 2 \star \left( \sum_{i \in B } \sum_{a \in A_i } d_a2^{\tilde{s_a} + \rho_a } \right), \end{equation}
\begin{equation} \label{def:OfAlpha} \alpha = 2 \star \left( \sum_{i \in B } \sum_{a \in A_i } d_a2^{\tilde{s_a} + \tilde{\rho_a} } \right), 
\end{equation} where the particulars of the numbers $\rho_a$, $a \in [0,N]$ are not very important, but for concreteness we quickly note them. If $ a \not\in \bigcup_{i \in B } A_i $ then $\rho_a = 0 $. Otherwise, if $a \in A_i$ for some $i \in B$ (indeed this $i$ is unique as the $A_i$ are disjoint), we set 
\begin{equation} \label{equ:defofRho} \rho_a = \sum_{j \in C_i} p_{i,j} \sum_{b \in A_j } d_b 2^{\tilde{s_b}}. 
\end{equation} We define the numbers $\tilde{\rho_a}$, for $a \in [0,N]$, by setting $\tilde{\rho_a} = \rho_a$ if $a \not\in \bigcup_{i \in B^-_1} A_i$ and if $a \in A_i$, for some $i \in B^-_1$, we set
\begin{equation} \label{equ:defOfRhoTilde}
 \tilde{\rho_a} = \rho_a + p_{i,j^*} \sum_{b \in A^*} d_b2^{\tilde{s_b}} .
\end{equation} Note that $p_{i,j^*} \leq W\left( \{a_{j^*+1},\ldots,a_m\} \right) $, by the structure of FEP-sets. We now make note of the main facts about these numbers. If we set $\rho = (\rho_0,\ldots, \rho_N)$, and $\tilde{\rho} = (\tilde{\rho}_0, \ldots, \tilde{ \rho }_{N})$, we may express 
\begin{equation} \label{equ:ExpansionOfrho} \tilde{\rho} = \rho + \sum_{b \in A^*} d_bw^{(b)}, 
\end{equation} for some vectors $w^{(b)} \in \mathbb{N}_0^{[0,N]}$, $b \in A^*$, that are supported on $\bigcup_{i \in B^-_1} A_i \subseteq [0,N]$. Note that we have $|w^{(b)}|_{\infty} \leq M_b$, for each $b \in A^{*}$, by (\ref{equ:defOfRhoTilde}) and Condition~\ref{ChoiceofMt:MtBiggerThanPrevExponents} on $U_1,\ldots ,U_N$ and Condition~\ref{ChoiceofMt:MtBiggerThanUt} on $M_0,\ldots,M_N$ in Claim \ref{ChoiceOfMt}. 
It is also important to observe that 
\begin{equation} \label{boundOnRho}|\rho|_{\infty}  +  \sum_{i = \max A^* + 1}^{N} |s_i|_{\infty}   \leq  U_{\max A^*}, \end{equation}
\begin{equation} \label{equ:BoundonRhoTilde} |\tilde{\rho}|_{\infty} + \sum_{i = \max A^* + 1}^{N} |s_i|_{\infty}  \leq U_{ \max S }. \end{equation} 
The first inequality holds by the minimality of $j^*$ among $\bigcup_{i \in B} C_i$ and Claim \ref{ChoiceOfMt}, while the second inequality holds by (\ref{equ:defOfRhoTilde}), the fact that $\min A^* > \max S$ and Condition~\ref{ChoiceofMt:MtBiggerThanPrevExponents} in Claim \ref{ChoiceOfMt}.
\paragraph{} Now write $\rho = \rho^- + \rho^+$, where $\rho^-$ is supported on the coordinates with indexes in $S = \bigcup_{i \in B^-} A_i$ and 
$\rho^+$ is supported on coordinates with indexes in $\bigcup_{i \in B^+}A_i$. We also write $\tilde{\rho} = \tilde{\rho}^- + \tilde{\rho}^+$  ($= \tilde{\rho}^- + \rho^+$), in a similar way.  
\paragraph{}
We now proceed to show $f(\alpha) = f(\beta)$. We have,
\[ f(\beta) = f\left( 2 \star \left( \sum_{i \in B^-} \sum_{a \in A_i} d_a2^{\tilde{s}_a + \rho_a} + \sum_{i \in B^+} \sum_{a \in A_i} d_a2^{\tilde{s}_a + \rho_a} \right)\right) \] 
\begin{equation} \label{equ:FinalCalc} = f_{S,\max A^*,M_0,\ldots,M_{\max A^*}}\left( \rho^- + \sum_{\max A^*+1 \leq i \leq N } s_i \ \ ; \sum_{i \in B^+} \sum_{a \in A_i} d_a2^{\tilde{s}_a + \rho_a} \right) .\end{equation} Now set 
\[ \gamma = \sum_{i \in B^+} \sum_{a \in A_i} d_a2^{\tilde{s}_a + \rho_a}  
\] and note that
\begin{equation} \label{equ:BoundOnGamma}
\gamma \leq U_{\max A^*} \leq M_{\max A^*},
\end{equation}
by Condition~\ref{ChoiceofMt:MtBiggerThanUt} on $M_t$, and Condition~\ref{ChoiceofMt:MtBiggerThanPrevExponents} on $U_t$ in Claim \ref{ChoiceOfMt}, as $\gamma \leq \max FEP(\{a(S') : S' \subseteq [\max A^* + 1 ,N]\})$. This, along with the fact that the left argument in (\ref{equ:FinalCalc}) is bounded by $U_{\max A^*}$ (as noted at (\ref{boundOnRho})), implies that (\ref{equ:FinalCalc}) is
\begin{equation} \label{equ:FinalCalc2} = \partial^{\max A^* - \max S  }f_S\left( \rho^- + \sum_{\max A^*+1 \leq i \leq N } s_i \ \ ; \gamma \right), \end{equation}
by our selection of $M_0,\ldots,M_N$ at Claim \ref{ChoiceOfMt}. Now, apply Claim \ref{CascadeClaim} with the choice of $t = \max A^*$ and $u = \max A^* - \max S$ to learn that (\ref{equ:FinalCalc2}) is 
\begin{equation} \label{equ:FinalCalc3} = \partial^{ 0  }f_S\left( \rho^- + \sum_{\max A^*+1 \leq i \leq N } s_i  + \sum_{\max S + 1 \leq i \leq \max A^*} s_i + v_id_i \ \ ; \gamma \right),\end{equation}
for any choice of the $v_i \in \mathbb{N}_0^{[0,N]}$, $i \in [\max S +1, \max A^*]$,  satisfying $|v_i|_{\infty} \leq M_i$. We choose $v_i = w^{(i)}$ as in equation (\ref{equ:ExpansionOfrho}); then absorb the $w^{(i)}d_i$ terms into the $\rho$, in order to make $\tilde{\rho}$, as in (\ref{equ:ExpansionOfrho}). That is, we learn that (\ref{equ:FinalCalc3}) is
%\[ \partial^{ 0  }f_{S}\left( \rho^- + \sum_{\max S+1 \leq i \leq N } s_i + w_id_i   \ \ ; \gamma \right),\] 
\[f_{S}\left( \tilde{\rho^-} + \sum_{\max S+1 \leq i \leq N } s_i  \ \ ; \gamma \right) = f_{S,\max{S},M_0,\ldots,M_{\max S}} \left( \tilde{\rho}^- + \sum_{ \max S +1 \leq i \leq  N} s_i \ \ ; \gamma \right) \]
\[ =f \left( 2 \star \left( \sum_{i \in B } \sum_{a \in A_i } d_a2^{s_a + \tilde{\rho_a} } \right)\right) = f(\alpha). 
\] To obtain the first equality, we have used that $\gamma \leq M_{\max A^*} \leq M_{\max S}$, by (\ref{equ:BoundOnGamma}), and that $\tilde{\rho^-} + \sum_{\max S+1 \leq i \leq N } |s_i|_{\infty} \leq U_{\max S}$, as noted at (\ref{equ:BoundonRhoTilde}), and then applied (\ref{ChoiceofMt:agree2}). The second to last equation follows by recalling the definition of $\gamma$; the last equation follows by recalling the expansion of $\alpha$, given above at (\ref{def:OfAlpha}). Thus, we conclude that $f(\alpha) = f(\beta)$. \qed
\paragraph{}
To finish the proof of the theorem, we simply apply induction on the elements of the $FEP$-set, as noted above. This completes the proof of the Claim and the proof of the Theorem. \qed \qed

\section{Non-partition regularity and a classification of first order systems} \label{sec:ClassificationResult}

In this section we introduce our ``$\log^*$-colourings''. These colourings will allow us to give many natural examples of exponential patters which are not partition regular. They also form the missing piece in the proof of our classification result for ``height-one'' systems, Theorem~\ref{ClassificationOfFirstOrderCompositions}. For an integer $n \geq 4$, we define the $n$\emph{th} $\log^*$-\emph{colouring}, to be the colouring (essentially) defined by ``colour $x$ with the least positive residue of $\log^* x$, when taken modulo $n$''. 
\paragraph{}
To be more precise, we declare all logarithms to be in \emph{base 2} and, for $k \in \mathbb{N}$, let $\log_{(k)} $ denote the $k$th iterated logarithm. Define the $\log^*$ function $L : \mathbb{N} \rightarrow \mathbb{N} \cup \{0\}$ as $L(x) = \min \left\lbrace k : \log_{(k)} x \leq 1 \right\rbrace$. It is easy to check that the function $L$ satisfies
\begin{enumerate}
\item $L$ is monotone increasing; \label{Lmonotone}
\item If $x \geq 2 $, then $L(\log x) + 1 = L(x)$; \label{composingLandl}
\item If $a,b \in \mathbb{N}$ and $a \leq b$ then $L(a + b) \leq L(b) + 1 $. \label{smalladditions}
\end{enumerate}
The restriction that these $\log^*$-colourings impose for partition regular patterns is made precise by the following result.  
\begin{lemma} \label{nobBiggerThana} For $r \in \mathbb{N}$, there exists a finite colouring $f : \mathbb{N} \rightarrow [r+3]$ admitting no monochromatic pair $b,a^b$ where $\log_{(r)} a \leq b$ and $a,b \not= 1$. 
\end{lemma}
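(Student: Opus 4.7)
The plan is to take $f(x) = L(x) \bmod (r+3)$, viewed as a colouring into $r+3$ colour classes, and to show that whenever $a,b\geq 2$ and $\log_{(r)}a\leq b$, the pair $b,a^b$ cannot receive the same colour. The entire strategy is to sandwich $L(a^b)-L(b)$ inside an interval strictly between $1$ and $r+2$, so that the difference is a nonzero residue modulo $r+3$.

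The lower bound is the easier half. By property (2), $L(a^b)=L(b\log a)+1$, since $a^b\geq 4\geq 2$. Because $\log a\geq 1$, we have $b\log a\geq b$, and monotonicity (property (1)) gives $L(b\log a)\geq L(b)$. Hence $L(a^b)\geq L(b)+1$.

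For the upper bound I would split into two cases depending on the relative sizes of $\log a$ and $b$. In the first case, $\log a\leq b$, so $b\log a\leq b^2$, and two applications of properties (2) and (3) yield
\[
L(b^2)=L(2\log b)+1\leq L(\log b)+2=L(b)+1,
\]
hence $L(a^b)\leq L(b)+2$. In the second case, $\log a>b$, so $b\log a\leq (\log a)^2$, and the same computation gives $L(a^b)\leq L(\log a)+2=L(a)+1$. The main step here is to convert the hypothesis $\log_{(r)}a\leq b$ into a clean bound on $L(a)$. Applying $\log$ monotonically $L(b)$ times to both sides of $\log_{(r)}a\leq b$ yields $\log_{(r+L(b))}a\leq \log_{(L(b))}b\leq 1$, and therefore $L(a)\leq L(b)+r$. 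Combined with the above, this gives $L(a^b)\leq L(b)+r+1$ in the second case. In either case,
\[
1\leq L(a^b)-L(b)\leq r+1.
\]

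Since this difference lies strictly between $0$ and $r+3$, it cannot vanish modulo $r+3$, so $f(b)\neq f(a^b)$, completing the proof. The main technical obstacle is the second case, where one must control $L(a)$ by $L(b)$; this requires being careful that iterated logarithms of $a$ and $b$ remain $\geq 1$ at each stage so that monotonicity legitimately transfers through. The extra slack (three colours rather than $r+2$) is exactly what absorbs small edge cases in the passage from $\log_{(r)}a\leq b$ to $L(a)\leq L(b)+r$, as well as any loss from applying property (3) when the two summands are not exactly ordered as assumed.
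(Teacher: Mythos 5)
Your proof is correct and follows essentially the same approach as the paper: both use the $\log^*$-colouring and pin $L(a^b)-L(b)$ into the interval $[1,r+1]$, then conclude by a congruence argument. The only cosmetic differences are that the paper reserves colour $r+3$ for the number $1$ and reduces $L(x)$ modulo $r+2$ for $x>1$ (rather than reducing modulo $r+3$ throughout as you do), and the paper obtains the upper bound by expanding $L(a^b)=L(\log b + \log_{(2)}a)+2$ and invoking property (3) directly, whereas you split into the cases $\log a\leq b$ and $\log a>b$; both routes give the same bound $L(a^b)\leq L(b)+r+1$.
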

\begin{proof}
Define the colouring of $f: \mathbb{N} \rightarrow [r+3]$ by first defining $f(1) = r+3$ and then, for all $x > 1$, we define $f(x)$ to be the least positive residue of $L(x)$, when taken modulo $r+2$. 
\paragraph{}
To check the desired property of $f$, let $a,b \in \mathbb{N}$. If $f(b)$ or $f(a^b)$ is $k+3$ we have $ a = b = 1$ and so we may assume that $a,b \geq 2$ and, in particular, that $a^b \geq 4$. We now show that $L(a^b) \geq L(b) +1 $. To do this, write 
\begin{equation} \label{equ:InMainNegResult} L(a^b) = L(\log_{(2)} a^b ) + 2.
\end{equation} This is possible, as  $a^b \geq 4$. Now the quantity at (\ref{equ:InMainNegResult}) is equal to
\[ L\left(\log\left( b\log a \right) \right) + 2 = L\left(\log b + \log_{(2)} a \right) + 2 \geq L(\log b) + 2 = L(b) + 1,
\] as desired. To bound $L\left(a^b\right)$ from above, we again write 
\[ L(a^b) = L\left( \log b + \log_{(2)} a \right) +2 \leq \max\{ L(b)-1 , L(\log_{(r)} a) + (r-2) \} + 3.
\] Now since $ \log_{(r)} a  \leq b $ the above is at most $L\left( b\right) + (r+1)$.
Putting these bounds together, allows us to conclude that $L(a^b) \in \{L(b)+1,L(b)+2,\ldots, L(b) + (r+1) \}$. It is therefore impossible for $L(a^b) \equiv L(b) \mod r+2$. This completes the proof.
\end{proof}

We now readily draw several conclusions. First, we note that if we take $r = 1$ in the above lemma, we obtain a $4$-colouring which admits no monochromatic quadruple of the form $a,b,a^b,b^a$. Going further, we see that this colouring, along with our main theorem (Theorem \ref{MainTheorem}), grants us a classification of ``height-one'' exponential systems, Theorem \ref{ClassificationOfFirstOrderCompositions}. Recall that this theorem says that one can find a monochromatic pattern of the form $\{x_1,\ldots,x_m \} \cup \{ x_i^{x_j} : (i,j) \in R\}$, where $R \subseteq [m]\times [m]$, in an arbitrary colouring if and only if $R$ does not contain a directed cycle - a set of the form $(y_1,y_2),\ldots,(y_{l-1},y_l),(y_l,y_1)$, where $y_1,\ldots,y_l,l \in [m]$. 
\paragraph{}
\emph{Proof of Theorem \ref{ClassificationOfFirstOrderCompositions} :} The positive direction follows immediately from the partition regularity of FEP-sets. Conversely, suppose that $R$ is a binary relation on $[m]$ with a directed cycle. Without loss, we may assume this cycle is $(1,2),\ldots,(l-1,l),(l,1)$, $ l \in [m]$. Apply Lemma~\ref{nobBiggerThana} to obtain a $4$-colouring $f$ admitting no monochromatic pair $a, a^b$ with $a < b $. If $R$ is partition regular we can find $x_1,\ldots,x_l$ such that $x_1,\ldots,x_l$ $x_1^{x_2},x_2^{x_3}, \ldots, x_l^{x_1}$ is monochromatic with respect to $f$. Hence $x_1 > x_2 > \cdots > x_l > x_1$, a contradiction. This completes the proof.\qed
\paragraph{}
Lemma~\ref{nobBiggerThana} can also be interpreted as giving a lower bound on the minimum integer $N(k)$, $k \in \mathbb{N}$, for which every $k$-colouring of $[N(k)]$ admits a monochromatic exponential triple. In particular, it shows that $N(k)$ is at least a tower of $2$s of height $k-3$.
\paragraph{}
Finally, we remark that the colouring given by Lemma~\ref{nobBiggerThana} and $r = 1$, also forbids the natural infinite extension of finite exponential sets. Indeed, we can even forbid patterns such as $\{ x_1,x_2,\ldots, \} \cup \{ x_1^{x_2},x_2^{x_3}, x_3^{x_4} \ldots \}$, where $x_i >1$ for all $i \in \mathbb{N}$.  
 
\section{Inconsistency of Exponential Triples and Schur Triples } \label{sec:consistency}
In this short section we note the peculiar relationship between exponentiation and addition in the context of partition regularity. We give an example of a $16$-colouring for which there is no monochromatic set of the form $x,y,x+y, a,b,a^b$, where $x,y,a,b \in \mathbb{N}$. Curiously, we require a single case of Fermat's Last Theorem for our result. As the case $n = 4$ has an elementary proof (which is actually due to Pierre de Fermat \cite{Azcel}), we appeal only to this case. A proof of this result appears in \cite{Edwards}, among many other sources.

\begin{theorem} \label{Theorem:FermatsLastTheorem} There are no integer solutions to 
\[ X^4 + Y^4 = Z^4 
\] with $X,Y,Z \not= 0$. \end{theorem}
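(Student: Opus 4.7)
The plan is to prove the slightly stronger statement that $X^4 + Y^4 = Z^2$ has no positive integer solutions; this immediately implies the theorem by substituting $Z \mapsto Z^2$. The method is Fermat's classical infinite descent: assume a solution exists with $Z$ minimal, and produce from it another solution with strictly smaller third coordinate, contradicting minimality.

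First I would reduce to the primitive case by dividing out any common factor, so that one may assume $\gcd(X,Y)=1$ and, after swapping, that $X$ is odd and $Y$ is even. Then $(X^2)^2 + (Y^2)^2 = Z^2$ is a primitive Pythagorean triple. The standard parameterization of such triples furnishes coprime integers $m > n > 0$ of opposite parity with
\[ X^2 = m^2 - n^2, \qquad Y^2 = 2mn, \qquad Z = m^2 + n^2. \]
A quick parity argument, using that $X$ is odd, forces $m$ odd and $n$ even; writing $n = 2k$, coprimality gives that $m$ and $k$ are coprime, and the equation $Y^2 = 4mk$ (together with $\gcd(m,k)=1$) forces both $m$ and $k$ to be squares, say $m = a^2$ and $k = b^2$.

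Next I would turn to the equation $X^2 + n^2 = m^2$, which is itself a primitive Pythagorean triple with $X$ odd. Applying the parameterization again yields coprime integers $p > q > 0$ of opposite parity with
\[ X = p^2 - q^2, \qquad n = 2pq, \qquad m = p^2 + q^2. \]
Combining $n = 2pq = 2k = 2b^2$ gives $pq = b^2$, and since $\gcd(p,q) = 1$ this forces $p = u^2$ and $q = v^2$ for some positive integers $u,v$. Substituting back into $m = p^2 + q^2$ gives
\[ u^4 + v^4 = p^2 + q^2 = m = a^2, \]
which is a new solution of the original equation $X'^4 + Y'^4 = Z'^2$ with $Z' = a$. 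The final step is to verify that $a < Z$, so that the minimality of $Z$ is contradicted; this follows immediately from $Z = m^2 + n^2 \geq m^2 = a^4 \geq a$, with strict inequality since $n \geq 2$ and hence $Z > a$.

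The main obstacle is bookkeeping: at each application of the Pythagorean parameterization one must carefully track parities and coprimalities to guarantee that the factors extracted are squares and that the resulting $(u,v,a)$ is genuinely a nontrivial positive solution to the same equation. Once those arithmetic details are in hand, the descent closes on itself and the theorem follows.
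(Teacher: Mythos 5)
Your proof is correct: it is the classical Fermat infinite descent on $X^4+Y^4=Z^2$, with the two nested applications of the primitive Pythagorean parameterization and the coprimality argument that forces each extracted factor to be a perfect square. The paper does not actually prove this theorem; it cites it as a known elementary result of Fermat (referencing Edwards and Aczel), and your argument is precisely the standard proof those sources present, so in substance this matches the paper's intended justification.
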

We now prove the main result of this section, that $x,y,x+y$, $a,b,a^b$ are inconsistent partition regular patterns.

\emph{Proof of Theorem~\ref{thm:Inconsistent}: }
We define our colouring $f$ by first defining two auxiliary colourings $f_1,f_2 : \mathbb{N} \rightarrow \{0,1,2,3\}$ and then defining $f(x) =  (f_1(x), f_2(x)) $. We define $f_1(x)$ to simply be the remainder of $x$ modulo $4$. Now, for $x \in \mathbb{N} \setminus \{1 \}$, we define  
\[  l(x) = \max\left\lbrace b : x = a^b \text{ with } a,b \in \mathbb{N} \right\rbrace ,
\] and $l(1) = 0 $. Note that $l(x^y) = l(x)y$ for all $x,y \in \mathbb{N}$, with $x \not= 1$. We define $f_2(x)$ to be the remainder of $l(x)$ modulo $4$ for all $x \in \mathbb{N}$. 
\paragraph{}
Now suppose that $a,b, x,y \in \mathbb{N}$ are such that $x, y, x+y$, $a,b,a^b$, all receive the same colour. 
We learn that $x + y \equiv y \ (4)$ and thus $x \equiv 0 \ (4)$. It follows that $b \equiv 0 \ (4)$ and hence $l(a^b) = bl(a) \equiv 0 \ (4) $.
hence $l(x) \equiv l(y) \equiv l(x + y) \equiv 0 \ (4)$. So we may write $x,y,x+y$ in the form
\[ x = u^4 , y = v^4, x+y = w^4 ,
\] where $u,v,w$ are positive integers. But this is impossible as we have obtained the a non-trivial solution to the equation 
\[ u^4+ v^4 =  w^4 .\]  \qed
\section{Open Problems} \label{sec:Questions}
The $\log^*$-colourings, given in Section~\ref{ClassificationOfFirstOrderCompositions}, impose a strong ordering on the elements of an exponential partition regular pattern and can actually forbid any pattern where a base appears in its own exponent. For example, patterns such as $a,b,a^{b^a}$,  $a,b,a^{ba}$, or (trivially) $a,a^a$ are not partition regular. On the other hand, a quick glance at Theorem~\ref{MainTheorem} reveals that a ``height-two exponent'' has a great deal of freedom. For example, the elements $a\star \left( b^{c^c} \right)$, $a \star \left( b \star \left( {c^{c^c}} \right)\right)$ are included in the $FEP_W(a,b,c)$-set, for an appropriately chosen $W$. What we know much less about is the repetition of ``height-one'' exponents. The following question arises naturally. 

\begin{question} \label{q:abb}
Is the pattern $a,b,a^b,a^{b^b}$,  $a,b >1$ partition regular? 
\end{question}
We conjecture the answer to this question is ``no'' and support our guess with two observations. Firstly, we show we cannot affirmatively answer Question~\ref{q:abb} by only considering numbers of the form $2^x$, as we have done in Theorem~\ref{MainTheorem}. Secondly, we show that $a,b,a^{b^{b^b}}$ is not partition regular. The main trick that we use to resolve these questions is captured in the following lemma. We momentarily let $\{ x \}$ stand for the fractional part of a real number $x$ and, for a prime $p$ and integer $n >0$, we let $\nu_p(n)$ be the largest integer $k$ for which $p^k$ divides $n$.

\begin{lemma}\label{lemma:noDiffInSparseSets}
Let $  a_1 < a_2 < \cdots$ be a lacunary sequence of positive real numbers, that is with $\liminf \frac{a_{n+1}}{a_n} > 1$. Then there exists a finite colouring $f$ of $\mathbb{R}$ so that there is no monochromatic pair $x, x+a_n$ with $x \in \mathbb{R}, n\in \mathbb{N}$.
\end{lemma}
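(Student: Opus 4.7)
The plan is to combine the multiplicative (scale) structure of $|x|$ with an additive refinement, using lacunarity to coordinate the two. After passing to a sub-sequence and handling finitely many small terms via a separate product colouring (a finite set of positive reals can always be avoided by a sufficiently fine partition of $\mathbb{R}$), I may assume $a_{n+1}/a_n \geq \rho$ for every $n$, with some fixed $\rho \geq 3$.

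The principal colouring has two layers. The first is a \emph{scale colouring}: fix a large integer $M$ and set
\[
  f_0(x) = \bigl(\mathrm{sign}(x),\; \lfloor \log_{\rho} |x| \rfloor \bmod M\bigr)
\]
for $x \neq 0$, with $f_0(0)$ an extra colour. If $x$ and $x + a_n$ have opposite signs, or lie in ``annuli'' $A_k := [\rho^k, \rho^{k+1})$ whose indices differ by a positive integer not divisible by $M$, then $f_0$ separates them. The residual cases are: (a) $x$ and $x + a_n$ lie in the same annulus $A_k$, which forces $a_n < \rho^k(\rho - 1)$; or (b) they lie in annuli whose indices differ by a positive multiple of $M$, which forces $a_n \geq \rho^{k+M} - \rho^{k+1}$. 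By lacunarity, the set $S_k$ of indices $n$ for which $a_n$ lands in the allowable range in case (a) has size bounded independently of $k$, and similarly for (b).

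The second layer $g$ is designed to handle the residual cases. Within each annulus $A_k$, define $g$ to be a short periodic colouring of period proportional to the smallest element of $S_k$, chosen so that no two points of $A_k$ differing by an element of $S_k$ share a $g$-colour; a bounded palette suffices since $|S_k|$ is uniformly bounded. Set $f(x) := (f_0(x), g(x))$.

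\textbf{Main obstacle.} The delicate point is twofold: ensuring $g$ is finite-valued globally across all annuli, and ensuring that pairs straddling an annulus boundary are actually caught by $f_0$. The first holds because the combinatorics of $S_k$ are uniform in $k$. The second relies on the scale-mod-$M$ component of $f_0$: when $x$ and $x + a_n$ fall into adjacent annuli the scale changes by exactly $1$, which is not $0 \pmod M$, so $f_0$ distinguishes the pair; the only genuinely subtle situation is case (b), where $a_n$ is enormous compared to $|x|$ and one must verify that $g$, though defined on the small scale of $a_n$ within $A_k$, still separates such widely-spaced pairs. This is the main technical burden, and is addressed by choosing $M$ large enough (relative to $\rho$) that lacunarity forces case (b) to pin $n$ down to a single candidate per pair of annuli, which can then be dealt with by one further mod-refinement absorbed into $g$.
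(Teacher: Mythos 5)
There is a genuine gap, and it is at the heart of your second layer. You claim that the set $S_k$ of indices $n$ for which both $x$ and $x+a_n$ can land in the same annulus $A_k = [\rho^k,\rho^{k+1})$ has size bounded independently of $k$. But in case (a) the only constraint is $a_n < \rho^k(\rho-1)$, which is a threshold that grows geometrically in $k$. Since the $a_n$ also grow (at worst) geometrically, the number of indices below that threshold is $\Theta(k)$: explicitly, once you have arranged $a_{n+1}\geq \rho\,a_n$ and $a_1$ is fixed, $a_n<\rho^k(\rho-1)$ holds for roughly $k+O(1)$ values of $n$. So $|S_k|\to\infty$, and a "bounded palette" for the per-annulus colouring $g$ cannot possibly separate all of the required differences — that is precisely the uniformity you needed and do not have. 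Even if $|S_k|$ were bounded, the elements of $S_k$ span wildly different scales (from $a_1$ up to $\sim\rho^k$), and whether a single short-period colouring separates $x$ from $x+a_n$ then hinges on delicate Diophantine control of $a_n$ modulo the period; your plan never supplies that control. A secondary, more minor issue: the reduction to $a_{n+1}/a_n\geq\rho$ is not achieved by "passing to a sub-sequence and handling finitely many small terms." A subsequence of the $a_n$ leaves the omitted differences unforbidden, and lacunarity only gives $a_{n+1}/a_n\geq 1+\varepsilon$ eventually; the correct move is to partition $\{a_n\}$ into $l$ arithmetic-index classes with $(1+\varepsilon)^l>4$ so each class has ratio $>4$, and take a product colouring over the classes.

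For contrast, the paper avoids the multi-scale bookkeeping entirely. After the partition-into-$l$-classes reduction, it finds a single real $\alpha\in(0,1)$ with $\{\alpha b_n\}\notin[-1/4,1/4]$ for every $n$ (a nested-intervals argument, using $b_{n+1}>4b_n$), and then colours $x$ by which quarter of $[0,1)$ contains $\{\alpha x\}$. A monochromatic pair $x,x+b_n$ would force $\{\alpha b_n\}$ close to $0$, contradiction. The choice of $\alpha$ plays exactly the role that your per-annulus periods were meant to play, but it works simultaneously at all scales — which is what your annulus-by-annulus scheme cannot do once $|S_k|$ is unbounded. If you want to rescue your idea you would end up proving a version of the paper's $\alpha$-existence claim anyway, so you may as well take that route directly.
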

\begin{proof}
Let $\epsilon >0$ be such that $a_n > (1+\epsilon)a_n$, for all $n \in \mathbb{N}$, and choose $l \in \mathbb{N}$ to be such that $(1+\varepsilon)^l>4$. Now notice that if we partition $\{a_n\} = \bigcup_{i=0}^{l-1} \{ a_n : n = lm + i \}$ it is sufficient to find, for each individual part of the partition, a colouring that forbids the differences in that part; we then simply take the product of all of these colourings to forbid all differences in the union. Now notice that if we arrange each set in the above partition as an increasing sequence, each sequence satisfies the relation $b_{n+1} > (1+\varepsilon)^lb_{n}> 4b_{n}$, for $n \in \mathbb{N}$. So it is sufficient to show that if $\{b_n\}$ is a sequence satisfying $b_n > 4b_{n-1}$, $n \in \mathbb{N}$, we can find a colouring of $\mathbb{R}$ forbidding differences in $\{b_n\}$ 
\paragraph{}
To do this, we first find a real number $\alpha \in (0,1)$ so that $\{\alpha b_n\} \not\in [-1/4,1/4]$, for all $n$. Consider the sets 
\[ S_n = \left\lbrace \alpha \in [0,1] : \{ \alpha b_n \} \not\in [-1/4,1/4]  \right\rbrace .
\] We show that $\bigcap_{n} S_n \not= \emptyset$, by way of the following simple claim. If $I \subseteq \mathbb{R}/\mathbb{Z}$, is an interval
of length $1/2b_n$, then $I$ intersects $S_{n+1}$ in an interval of size $1/2b_{n+1}$. To see this, simply note that as $\alpha$ varies over $I$, $\{ \alpha b_{n+1} \}$ twice ranges over all of $[0,1]$. Hence $S_{n+1} \cap I$ contains an interval of length $1/2b_{n+1}$.
\paragraph{}
We now see that $\cap_n S_n \not= \emptyset$; choose $I_1 \subseteq S_1$ to be an interval of length $1/2a_1$, and then iteratively apply the lemma to construct $I_1 \supseteq I_2 \supseteq \cdots $ so that $\emptyset \not= \bigcap_{n<N} I \subseteq \bigcup_{n < N }S_n $ for all $N$. As the $I_i$ are closed, it follows that $\bigcap_n S_n \not= \emptyset$. We choose $\alpha $ to be a number in this intersection. 
\paragraph{}
For the construction of the colouring, partition $[0,1] = [0,1/4) \cup [1/4,1/2) \cup [3/4 ,1)$. Now define a colouring $f : \mathbb{R} \rightarrow [4]$ by setting $f(x)$ to be $i \in [4]$, if $\{\alpha x\} \in [(i-1)/4,i/4).$ Finally note that it is impossible for $x, x+a_n$ to be in the same colour class, as this would imply that $\{\alpha x\},\{\alpha x+ \alpha a_n\}$ lie in the same interval of length $1/4$, which is forbidden by the choice of $\alpha$.
\end{proof}

We now show that we can colour the subsequence $\{ 2^n \}$ so that there is no monochromatic triple of the form $a,a^{b^b}$.
\begin{prop}
One can finitely colour $\mathbb{N}$ so that there is no monochromatic pair $a, a^{b^b}$ with $a = 2^s,b =2^t$, for some $s,t \in \mathbb{N}$.
\end{prop}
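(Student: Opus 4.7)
The plan is a short chain of two logarithmic reductions that turns the statement into a direct application of Lemma~\ref{lemma:noDiffInSparseSets}. Setting $a = 2^s$ and $b = 2^t$, we compute $b^b = 2^{t \cdot 2^t}$ and hence $a^{b^b} = 2^{\,s \cdot 2^{t \cdot 2^t}}$. Therefore a monochromatic pair of the required form is nothing more than a monochromatic pair of powers of two whose exponents are $s$ and $s \cdot 2^{t \cdot 2^t}$. Since the statement only constrains the colours of powers of two, the non-powers may be coloured with an extra fixed colour and forgotten.

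We now colour powers of two through a single function $h: \mathbb{N} \to [k]$ by setting $f(2^m) = h(m)$, so that the task becomes: produce a finite colouring $h$ of $\mathbb{N}$ with no monochromatic pair $m, m \cdot 2^{t \cdot 2^t}$. A second application of $\log_2$ removes the multiplication: if $H: \mathbb{R} \to [k]$ is a finite colouring of the reals and we set $h(m) = H(\log_2 m)$, then the forbidden multiplicative pair becomes the additive pair $x,\, x + t \cdot 2^t$, where $x = \log_2 m \ge 0$. Thus the proposition is reduced to finding a finite colouring of $\mathbb{R}$ that forbids every difference from the set $A := \{\,t \cdot 2^t : t \in \mathbb{N}\,\}$.

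This is exactly the form of Lemma~\ref{lemma:noDiffInSparseSets}, so it only remains to verify that $A$ is lacunary. Writing $a_t = t \cdot 2^t$, we have
\[
\frac{a_{t+1}}{a_t} \;=\; \frac{(t+1)\cdot 2^{t+1}}{t \cdot 2^t} \;=\; 2\cdot\frac{t+1}{t} \;\ge\; 2,
\]
so $\liminf_{t\to\infty} a_{t+1}/a_t \ge 2 > 1$. Lemma~\ref{lemma:noDiffInSparseSets} then supplies the colouring $H$, and unwinding the two reductions produces the desired $f$. The argument is entirely mechanical once the key observation is made; there is no substantive obstacle, since the ``double logarithm'' coordinate $\log_2 \log_2 (2^{s \cdot 2^{t \cdot 2^t}}) = \log_2 s + t\cdot 2^t$ turns the exponential pattern $a \mapsto a^{b^b}$ into a translation by a lacunary amount. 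The only point worth any care is that $H$ must be defined on all of $\mathbb{R}$ (not merely positive reals) to accommodate $s = 1$, which is why Lemma~\ref{lemma:noDiffInSparseSets} is stated for $\mathbb{R}$.
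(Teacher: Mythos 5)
Your proof is correct and matches the paper's approach: both reduce to Lemma~\ref{lemma:noDiffInSparseSets} applied to the lacunary sequence $\{t2^t\}$, after a two-step ``logarithmic'' reduction that turns $a\mapsto a^{b^b}$ into a translation. The only cosmetic difference is that you use $\log_2\circ\log_2$ for the outer and inner reductions while the paper uses the $2$-adic valuation $\nu_2\circ\nu_2$; both functions satisfy $g(m\cdot 2^k)=g(m)+k$, which is all that is needed.
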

\begin{proof} 
Notice that the sequence $\{ n2^n \}_n$ satisfies the conditions of Lemma~\ref{lemma:noDiffInSparseSets} and so we may find a finite colouring $f$ of $\mathbb{N}$ with no monochromatic pair of the form $m, m+n2^n$, with $m,n\in \mathbb{N}$. We then define the colouring $c$, by $c(x) = f(\nu_2(\nu_2(x)))$, for $x\in \mathbb{N} \setminus \{1\}$, and define $c(1)$ to be some colour which is distinct from all other colours. Now if we have $c(a) = c(a^{b^b})$, where $a,b$ are of the above form, we have $f(\nu_2(s)) = c(a) = c(a^{b^b}) = f\left( \nu_2(s) + t2^t \right)$, which contradicts our choice of $f$.\end{proof}

We also show that we may forbid monochromatic pairs of the form $a, a\star (b  \star (b \star b))$ in a finite colouring of $\mathbb{N}$.

\begin{prop} There exists a finite colouring of $\mathbb{N}$ such that there is no monochromatic pair of the form $a,a\star (b  \star (b \star b))$, $a,b >1$.
\end{prop}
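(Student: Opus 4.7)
The plan is to mimic the preceding proposition's trick, replacing iterated $\nu_2$ with iterated $\log_2$. Define $M : \mathbb{N}_{\geq 2} \to \mathbb{R}_{\geq 0}$ by $M(x) = \log_2 \log_2 x$, and observe the clean identity
\[ M\bigl(a^{b^{b^b}}\bigr) = \log_2\bigl(b^{b^b} \log_2 a\bigr) = b^b \log_2 b + \log_2 \log_2 a = b^b \log_2 b + M(a), \]
valid for all $a, b \geq 2$. Thus $M(a^{b^{b^b}}) - M(a) = b^b \log_2 b$, a quantity depending only on $b$. If a finite colouring $f$ of $\mathbb{R}$ admits no monochromatic pair whose difference belongs to $\{ b^b \log_2 b : b \in \mathbb{N}, b \geq 2\}$, then setting $c(x) = f(M(x))$ for $x \geq 2$ (and $c(1)$ any fresh colour) will satisfy the conclusion.

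It remains to produce such an $f$. The sequence $\{b^b \log_2 b\}_{b \geq 2}$ is lacunary, since
\[ \frac{(b+1)^{b+1}\log_2(b+1)}{b^b\log_2 b} = (b+1)\left(1+\tfrac{1}{b}\right)^{\!b} \cdot \frac{\log_2(b+1)}{\log_2 b} \longrightarrow \infty, \]
so Lemma~\ref{lemma:noDiffInSparseSets} directly supplies the desired finite colouring $f : \mathbb{R} \to [k]$ with no monochromatic pair $(x, x + b^b \log_2 b)$ for any $b \geq 2$.

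To conclude, set $c(x) = f(M(x))$ for $x \geq 2$ and $c(1) = k+1$. For any $a, b > 1$ both $a$ and $a^{b^{b^b}}$ lie in $\mathbb{N}_{\geq 2}$, and by the identity above the $f$-values at $M(a)$ and $M(a^{b^{b^b}})$ differ, giving $c(a) \neq c\bigl(a^{b^{b^b}}\bigr)$. There is no real obstacle here: the proof reduces to the displayed identity plus Lemma~\ref{lemma:noDiffInSparseSets}, exactly in parallel with the way the preceding proposition used iterated $\nu_2$ to convert the exponential pattern $a, a^{b^b}$ into an additive gap $t \cdot 2^t$. The minor bookkeeping — that $\log_2 a \geq 1$ and $\log_2 \log_2 a \geq 0$ whenever $a \geq 2$, so $M$ is defined and non-negative on the relevant inputs — is immediate.
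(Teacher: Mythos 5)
Your proof is correct and takes essentially the same route as the paper: both use the identity $\log_2\log_2\bigl(a^{b^{b^b}}\bigr) = \log_2\log_2 a + b^b\log_2 b$ together with the lacunarity of $\{b^b\log_2 b\}_{b\geq 2}$ and Lemma~\ref{lemma:noDiffInSparseSets}. The only cosmetic difference is at $x=2$ (where $\log_2\log_2 2 = 0$): the paper assigns $2$ a fresh colour, whereas you simply use $f(0)$, which also works since $f$ is defined on all of $\mathbb{R}$.
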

\begin{proof} We use Lemma~\ref{lemma:noDiffInSparseSets} as in the previous proposition. For $n \in \mathbb{N}$, consider the sequence $\{ n^n\log_2 n \}_n$. Clearly this sequence satisfies the hypothesis of Lemma~\ref{lemma:noDiffInSparseSets} and so we may apply the lemma to obtain a colouring $f$ of $\mathbb{R}$ which forbids a monochromatic pair with a difference of this type. We then define a finite colouring of $\mathbb{N}\setminus \{1\}$ by $c(x) = f(\log_2\log_2(x))$, for $x \in \mathbb{N}\setminus \{1,2\}$, and by defining $c(2)$ to be a colour distinct from all other colours. Now if we have a monochromatic pair $a,a \star (b \star (b \star b))$ where $a,b>1$, it is clear that $a,b\not=2$. So we must have $f(\log_2\log_2(a)) = f( \log_2\log_2(a) + b^b\log_2(b) )$, contradicting the choice of $f$. 
\end{proof}

Another key property of FEP-sets is that they do not allow ``height-one exponents'' to also appear as a base in the same expression. For example, $a^b \cdot b$ is not, in general, contained in the set $FEP(a,b)$. However, we are unable to show that this is a necessary restriction.

\begin{question} Is $a,b,ab,a^b,a^b \cdot b$ partition regular, $a,b >1$?
\end{question}

Again, we conjecture the answer to the above is negative. Curiously, ``height-two exponents'' may reappear as a base in the same FEP-expression. Indeed, we have $a^{b^c} \cdot c \in FEP(a,b,c)$. We close with a incredibly simple-looking problem that we are unable to resolve.

\begin{question} \label{q:abatotheb+1}
Is $a,b,a^{b+1}, a,b>1$ partition regular ? 
\end{question}
This question has a natural multiplicative analogue.
\begin{question} Is $a,b,a(b+1)$ partition regular? 
\end{question}
We conjecture the answers to both questions are ``no'', but we are unable to resolve either. 

\section{Acknowledgements}
I should like to thank  B\'{e}la Bollob\'{a}s, Tom Brown, Veselin Jungi\'{c}, Imre Leader, and Micha\l{}  Przykucki for valuable discussions, support, and encouragement. I am especially indebted to Tom Brown and Veselin Jungi\'{c} for introducing the questions of Sisto to me.

\Addresses
\end{document}